\documentclass[11pt,a4paper]{amsart}

\usepackage{amssymb, amstext, amscd, amsmath, color}

\usepackage{url}
\usepackage{tikz}
\usepackage{epstopdf}
\usepackage{amsfonts}
\usepackage{amsthm}
\usepackage{amsfonts}
\usepackage{array}
\usepackage{epsfig}
\usepackage{eucal}
\usepackage{latexsym}
\usepackage{mathrsfs}
\usepackage{textcomp}
\usepackage{verbatim}
\usepackage{setspace}

\newtheorem{thm}{Theorem}[section]

\newtheorem{cor}[thm]{Corollary}

\newtheorem{lem}[thm]{Lemma}

\newtheorem{rem}[thm]{Remark}

\numberwithin{equation}{section}

%

\newcommand{\bS}{{\mathbb{S}}}

%

  \newcommand{\U}{{\mathcal{U}}}

%
\newcommand{\be}{{\bf{e}}}

%




\usepackage{blkarray}

\textwidth   16.1cm \textheight  22.3cm \topmargin  -0.4in
\oddsidemargin  -0.05in \evensidemargin  -0.05in

\begin{document}

\title[Pairing symmetries for Euclidean and spherical frameworks]{Pairing symmetries for Euclidean and spherical frameworks}
\author[K. Clinch]{K. Clinch}
\address{Dept.\ Math.\ Informatics \\ University of Tokyo\\
Tokyo \\Japan}
\email{clinch@mist.i.u-tokyo.ac.jp}
\author[A. Nixon]{A. Nixon}
\address{Dept.\ Math.\ Stats.\\ Lancaster University\\
Lancaster LA1 4YF \\U.K.}
\email{a.nixon@lancaster.ac.uk}
\author[B. Schulze]{B. Schulze}
\address{Dept.\ Math.\ Stats.\\ Lancaster University\\
Lancaster LA1 4YF \\U.K.}
\email{b.schulze@lancaster.ac.uk}
\author[W. Whiteley]{W. Whiteley}
\address{Dept.\ Math.\ Stats.\\ York University\\
Toronto  \\Canada}
\email{whiteley@mathstat.yorku.ca}
\thanks{2010 {\it  Mathematics Subject Classification.}
52C25, 05C10, 51E15\\
Key words and phrases: bar-joint framework, spherical framework, point-hyperplane framework, symmetry group, incidental symmetry, forced-symmetric rigidity}

\begin{abstract}
In this paper we consider the effect of symmetry on the rigidity of bar-joint frameworks, spherical frameworks and point-hyperplane frameworks in $\mathbb{R}^d$. In particular we show that, under forced or incidental symmetry, infinitesimal rigidity for 
spherical frameworks with vertices in $X$ on the equator and point-hyperplane frameworks with the vertices in $X$ representing hyperplanes are equivalent.
We then show, again under forced or incidental symmetry, that infinitesimal rigidity properties under certain symmetry groups can be paired, or clustered, under inversion on the sphere so that infinitesimal rigidity with a given group is equivalent to infinitesimal rigidity under a paired group. The fundamental basic example is that mirror symmetric rigidity is equivalent to half-turn symmetric rigidity on the 2-sphere. With these results in hand we also deduce some combinatorial consequences for the rigidity of symmetric bar-joint and point-line frameworks.
\end{abstract}

\date{}
\maketitle

\section{Introduction}

Given a collection of primitive geometric objects in a space satisfying particular geometric constraints, a fundamental question is whether the given constraints uniquely determine the whole configuration up to congruence. The rigidity problem for bar-joint frameworks in $\mathbb{R}^d$, where the objects are points, the constraints are pairwise distances and only local deformations are considered, is a classical example. Mathematically, a  (bar-joint) framework  in $\mathbb{R}^d$  is defined to be a pair $(G,p)$, consisting of an undirected finite graph $G=(V,E)$ and a map $p:V\to \mathbb{R}^d$.

A framework $(G,p)$ in $\mathbb{R}^d$ is \emph{rigid} if the only edge-length-preserving continuous motions of the vertices arise from isometries of $\mathbb{R}^d$. In general, when $d\geq 2$, it is NP-hard to determine if a given framework is rigid \cite{Abb}.

A standard approach to study the rigidity of bar-joint frameworks is to linearise the problem by differentiating the length constraints on the corresponding pairs of points. This leads to the notion of infinitesimal (or equivalently, static) rigidity.
An \emph{infinitesimal motion} of a framework $(G,p)$  in $\mathbb{R}^d$
is a function $u: V\to \mathbb{R}^{d}$ such that
\begin{equation}
\label{infinmotioneq}
\langle p_i-p_j, u_i-u_j\rangle =0 \quad\textrm{ for all } \{i,j\} \in  E\textrm{,}
\end{equation}
where $p_i=p(i)$ and $u_i=u(i)$ for each $i$. An infinitesimal motion $u$ of $(G,p)$ is a \emph{trivial infinitesimal motion}
if there exists a skew-symmetric matrix $S$
and a vector $t$ such that $u_i=S p_i+t$ for all $i\in V$.
$(G,p)$ is \emph{infinitesimally rigid} if every infinitesimal motion of $(G,p)$ is trivial, and \emph{infinitesimally flexible} otherwise. Moreover if the framework is suitably generic then rigidity and infinitesimal rigidity coincide \cite{AR}.

Pogorelov~\cite[Chapter V]{Pog} observed that the space of infinitesimal motions of a bar-joint framework that is constrained to lie on a strict semi-sphere is isomorphic to those of the framework obtained by a central projection to Euclidean space. Since then, connections between various types of rigidity models in different spaces have been extensively studied, see, e.g.,~\cite{Izmestiev,SaliolaWh,BSWWSphere}. When talking about  infinitesimal rigidity, these connections are often just consequences of the fact that  infinitesimal rigidity is preserved by projective transformations \cite{CrW,Ran}. A key essence of the research is its geometric and combinatorial interpretations, which sometimes give us unexpected connections between theory and real applications.

In \cite{EJNSTW} this line of research was extended to include \emph{point-hyperplane frameworks} which consist of points and hyperplanes combined with point-point distance constraints, point-hyperplane distance constraints and hyperplane-hyperplane angle constraints (see Section \ref{subsec:ph} for a rigorous definition). These types of frameworks have practical applications in areas such as mechanical and civil  engineering as well as CAD, since point-hyperplane distance constraints may be used to model slider-joints in engineering structures \cite{EJNSTW,JO}. In particular the following result showed that the (infinitesimal) rigidity of such frameworks is equivalent to the (infinitesimal) rigidity of Euclidean and spherical frameworks with a certain special subset of vertices (that correspond to the hyperplanes). See Section~\ref{subsec:sph} for a detailed discussion of spherical frameworks.

\begin{thm}\label{thm:regulartransfer}\cite[Theorems 2.4 and 2.5]{EJNSTW}
Let $G=(V,E)$ be a graph and $X\subseteq V$. Then the following are equivalent:
\begin{itemize}
\item[(a)] $G$ can be realised as an infinitesimally rigid bar-joint framework on $\mathbb{S}^d$ such that the points assigned to $X$ lie on the equator.
\item[(b)] $G$ can be realised as an infinitesimally rigid point-hyperplane framework in $\mathbb{R}^d$ such that each vertex in $X$ is realised as a hyperplane and each vertex in $V\setminus X$ is realised as a point.
\item[(c)]  $G$ can be realised as an infinitesimally rigid bar-joint framework in $\mathbb{R}^d$ such that the points assigned to $X$ lie on a hyperplane.
\end{itemize}
\end{thm}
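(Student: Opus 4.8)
The plan is to prove the two equivalences (a)$\Leftrightarrow$(c) and (b)$\Leftrightarrow$(c) separately, in each case identifying the relevant rigidity matrices up to rank- and kernel-preserving operations, and then invoking the fact recorded in the excerpt that infinitesimal rigidity is a projective invariant \cite{CrW,Ran}.

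First, for (a)$\Leftrightarrow$(c), I would use central (gnomonic) projection from $\mathbb{S}^d$ onto an affine hyperplane, the map underlying Pogorelov's observation \cite{Pog}. Choose coordinates so that the equator of $\mathbb{S}^d$ is the great subsphere lying in the coordinate hyperplane $x_{d+1}=0$, and project from the centre onto the tangent hyperplane at the north pole $x_{d+1}=1$. Under this projection a vertex on the equator is sent to infinity, so the projected configuration realises the vertices of $X$ on the hyperplane at infinity and the remaining vertices as finite points. Since infinitesimal rigidity on the sphere transfers to infinitesimal rigidity of the projected Euclidean picture, and since a projective transformation carrying the hyperplane at infinity to a finite hyperplane again preserves infinitesimal rigidity, composing these maps converts an infinitesimally rigid spherical realisation with $X$ on the equator into an infinitesimally rigid bar-joint realisation with $X$ on a hyperplane, and conversely.

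Second, for (b)$\Leftrightarrow$(c), I would set up the point-hyperplane rigidity matrix explicitly and match it row by row with the bar-joint rigidity matrix of a realisation in which the vertices of $X$ lie on a common hyperplane $H$. The dictionary is the standard one: a vertex of $X$ pushed to infinity along the normal of $H$ corresponds to a hyperplane; an edge between two finite points is an ordinary distance constraint; an edge from a finite point to a vertex of $X$ becomes a point-hyperplane distance constraint; and an edge between two vertices of $X$ becomes a hyperplane-hyperplane angle constraint. Concretely I would realise this as a limit, pushing the vertices of $X$ to infinity along the normal of $H$ while rescaling the corresponding rows, and check that the limiting rows are exactly the rows of the point-hyperplane rigidity matrix. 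A dimension count of the trivial motions (the Euclidean isometry group of $\mathbb{R}^d$ in both Euclidean models, and $O(d+1)$ on the sphere, all of dimension $\binom{d+1}{2}$) confirms that the kernels agree in their trivial part, so equality of ranks yields equivalence of infinitesimal rigidity.

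The main obstacle will be the careful bookkeeping at infinity. In the projection step I must ensure the projected framework is non-degenerate (no vertex lands on the centre of projection, and the data needed to realise the equivalence survive the projective map), and in the point-hyperplane step I must verify that the rescaled rows converge to the correct normal and offset data for each hyperplane, and that the velocity of a hyperplane—an infinitesimal rotation of its normal together with a change of offset—corresponds precisely to the tangent-space motion of the associated equatorial vertex on the sphere. Matching the trivial motions across the three models, in particular seeing that rotations of $\mathbb{S}^d$ fixing the equator correspond to the Euclidean isometries preserving $H$, is the place where the correspondence could fail if set up carelessly, so I would treat that computation explicitly.
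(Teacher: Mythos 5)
Your first leg, (a)$\Leftrightarrow$(c), is essentially the route the paper (following \cite{EJNSTW}) takes: in the proof of Corollary~\ref{cor:mirrortransfer} the passage from a point-hyperplane framework to a bar-joint framework with $X$ on a finite hyperplane is done by lifting to $\mathbb{S}^d$, applying a rotation $\gamma$ of the sphere so that the great subsphere carrying $X$ leaves the equator, applying the inversion operator of Lemma~\ref{lem:inv} to bring all points into the open upper hemisphere, and only then centrally projecting. Your phrasing --- project first, obtaining points of $X$ on the hyperplane at infinity, then apply a projective map sending infinity to a finite hyperplane --- needs this reordering to be rigorous, since the projective invariance of \cite{CrW,Ran} is a statement about configurations of finite points; the intermediate object with vertices at infinity is only meaningful inside the spherical model you are trying to establish. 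You also silently need Lemma~\ref{lem:inv} (rotating the sphere scatters points into both hemispheres), but you flagged the bookkeeping, and this leg is repairable into exactly the paper's argument.

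The genuine gap is in your (b)$\Leftrightarrow$(c) limit argument. If you push all vertices of $X$ to infinity along the \emph{single} normal $a$ of $H$, the points $h_j+ta$ do stay on a common hyperplane, but every limiting direction is $\pm a$, so the limit can only ever produce a point-hyperplane framework in which \emph{all} hyperplanes are parallel. Worse, your row dictionary fails: for an edge between two vertices of $X$ the bar-row difference $p_i-p_j=h_i-h_j$ is constant in $t$ and converges (under any rescaling) to a constraint in a direction parallel to $H$, not to the angle row $\langle a_i,\dot a_j\rangle+\langle \dot a_i,a_j\rangle=0$ --- which, for $a_i=a_j=a$, is in any case redundant given the normalisation rows $\langle a_i,\dot a_i\rangle=0$. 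So a (c)-realisation whose $X$-internal edges carry rank has no counterpart in your limit. To obtain hyperplanes with distinct normals, each vertex of $X$ must recede in its own direction $a_j$; but then at finite stages the points $ta_j+c_j$ lie near a sphere of radius $t$, not on a common hyperplane, so the finite stages are no longer (c)-realisations --- this is precisely why the correct correspondence sends distinct positions on $H$ to distinct equatorial points of $\mathbb{S}^d$, i.e.\ distinct normals, via a projective map moving $H$ to infinity, rather than a translation along one normal. Finally, even with the correct limiting family, lower semicontinuity of rank gives you only one implication (full rank of the limit matrix forces full rank nearby), not the equivalence. The paper avoids all of this by making every step an exact, invertible, rank-preserving correspondence: the projection dictionary $q_j=(a_j,0)$ between equatorial points and hyperplane normals together with the motion isomorphism $\chi_x$ handles (a)$\Leftrightarrow$(b) (this is the content reproduced in the proof of Theorem~\ref{thm:symtrans} and Section~\ref{sec:forced}), and (c) is then reached from (a) by rotation plus inversion as above --- no limits anywhere.
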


Symmetry plays a key role in some prominent applications of rigidity, such as the dynamics of proteins or the design of engineering structures, and the effect of symmetry on bar-joint frameworks has been well studied over the last decade \cite{jkt,mt1,OP,ST,SWorbit} (see also \cite{BShb,BSWWhb} for recent summaries of results). Note that there are two versions of symmetric rigidity: \emph{incidental} symmetry where a given framework is symmetric (and hence not `generic') but any continuous, or infinitesimal, motion is allowed; and \emph{forced} symmetry where a given framework is symmetric and it is considered to be rigid if the only possible motions destroy the symmetry. (Background definitions on symmetric frameworks are given in Section \ref{sec:backsym}.) 

In this paper we extend Theorem \ref{thm:regulartransfer} to symmetric frameworks. 
In particular given a framework that admits some point group symmetry we show, in Sections \ref{sec:transfer} and \ref{sec:forced}, that both forced-symmetric and incidentally symmetric infinitesimal rigidity can be transferred between spherical frameworks with a given set $X$ of vertices realised on the equator and point-hyperplane frameworks, where the vertices of $X$ are exactly the vertices realised as hyperplanes. 
We can give a full analogue of the theorem (i.e. showing a symmetric version of (c) is also equivalent) only in the case of mirror symmetry, again in both the forced and incidental cases.

It turns out that the impact of symmetry under the projective operations used to prove the above results reveal further unexpected equivalences. That is, certain pairs of symmetry groups turn out to provide identical infinitesimal rigidity properties. A fundamental example being that half-turn rotation and mirror symmetry on the 2-sphere have geometrically equivalent infinitesimal rigidity properties, in both the incidental and forced contexts. We give a detailed analysis of all such pairings on the 2-sphere in Section \ref{sec:pairs}, consider groups of involutions in higher dimensions in Section \ref{sec:higher} and discuss some consequences of these pairings, particularly from the combinatorial perspective, as we go.

Finally, in Section \ref{sec:non-free}, we consider the corresponding results when the action of the symmetry group is not free on the vertices of the symmetric graph. In this context we present some examples and again discuss some combinatorial consequences. In particular, we obtain a combinatorial characterisation of a special class of minimally infinitesimally rigid point-line frameworks with reflection symmetry. We conclude Section \ref{sec:non-free} with some observations on the projective/elliptical model which, via statics, is the root of the projective understanding of rigidity and connects to the projective basis of the pairings \cite{Con}.  

\medskip

\section{Rigidity of symmetric frameworks}
\label{sec:backsym}

\subsection{Symmetric graphs}

Let $G=(V,E)$ be a graph. An {\em automorphism} of $G$ is a permutation $\pi:V\rightarrow V$ such that $\{i,j\}\in E$ if and only if $\{\pi(i),\pi(j)\}\in E$.  The   group  of all automorphisms of  $G$ is denoted by $\textrm{Aut}(G)$. For an abstract group $\Gamma$, we say that $G$ is {\em $\Gamma$-symmetric} if there exists a group action  $\theta:\Gamma \to \textrm{Aut}(G)$.
 For the following definitions, we will assume that the action $\theta$ is free on the vertex set of $G$, and we will  omit $\theta$ if it is clear from the context. We will then simply write $\gamma i$ instead of $\theta(\gamma)(i)$.  

The {\em quotient graph} of a $\Gamma$-symmetric graph $G$ is the multigraph $G/\Gamma$ whose vertex set is the set $V/\Gamma$
of vertex orbits and whose edge set is the set $E/\Gamma$ of edge orbits. 
Note that an edge orbit may be represented by a loop in $G/\Gamma$. 
The \emph{(quotient) $\Gamma$-gain graph} of a $\Gamma$-symmetric graph $G$ is the pair $(G_0,\psi)$, where  $G_0=(V_0,E_0)$ is the quotient graph of $G$ with an orientation on the edges, and $\psi:E_0\to \Gamma$ is defined as follows. 
Each edge orbit $\Gamma e$ connecting $\Gamma i$ and $\Gamma j$ in $G/\Gamma$ can be written as $\{\{\gamma i,\gamma \circ\alpha j\}\mid \gamma\in \Gamma \}$ for a unique $\alpha\in \Gamma$. For each $\Gamma e$, orient $\Gamma e$ from $\Gamma i$ to $\Gamma j$ in $G/\Gamma$ and assign to it the gain $\alpha$. Then $E_0$ is the resulting set of oriented edges, and $\psi$ is the corresponding gain assignment.
(See \cite{jkt} for details.)

Suppose $\Gamma$ is an abstract multiplicative group. 
A closed walk $C=v_1,e_1,v_2,\dots,v_k,e_k,\break v_1$  in a quotient $\Gamma$-gain graph $(G_0,\psi)$ is called {\em balanced} if $\psi(C)=\Pi_{i=1}^k \psi(e_i)^{{\rm sign}(e_i)}=1$,
where ${\rm sign}(e_i)=1$ if $e_i$ is directed from $v_i$ to $v_{i+1}$, and ${\rm sign}(e_i)=-1$  otherwise. We say that an edge subset $F_0\subseteq E_0$ is {\em balanced} if all closed walks in $F_0$ are balanced; otherwise it is called {\em unbalanced}.


Let $k\in\mathbb{N}$, $l\in\{0,1,\ldots, 2k-1\}$ and $m\in \{0,1,\ldots, l\}$. Then  
 $(G_0,\psi)$ is called \emph{$(k,l,m)$-gain-sparse} if 
\begin{enumerate}
\item[(i)] $|F|\leq k|V(F)|-l$ for any nonempty balanced $F\subseteq E_0$, and,
\item[(ii)] $|F|\leq k|V(F)|-m$ for all $F\subseteq E_0$.
\end{enumerate}
Moreover, $(G_0,\psi)$ is \emph{$(k,l,m)$-gain-tight} if $|E(G_0)|=k|V(G_0)|-m$ 
and $(G_0,\psi)$ is $(k,l,m)$-gain-sparse.

\subsection{Schoenflies notation for symmetry groups on the 2-sphere} 

We call a subgroup of the orthogonal group $O(\mathbb{R}^d)$ a \emph{symmetry group} (in dimension $d$).
In the Schoenflies notation, the possible symmetry groups in dimension $3$ are $\mathcal{C}_s$, $\mathcal{C}_n$, $\mathcal{C}_i$, $\mathcal{C}_{nv}$,  $\mathcal{C}_{nh}$,  $\mathcal{D}_{n}$, $\mathcal{D}_{nh}$, $\mathcal{D}_{nd}$, $S_{2n}$, $\mathcal{T}$ , $\mathcal{T}_d$, $\mathcal{T}_h$,  $\mathcal{O}$, $\mathcal{O}_h$, $\mathcal{I}$ and $\mathcal{I}_h$.  $\mathcal{C}_s$ is generated by a single reflection $s$, and $\mathcal{C}_n$, $n\geq 1$, is a group
generated by an $n$-fold rotation $C_n$. $\mathcal{C}_i$ is the group generated by the inversion $\iota$, $\mathcal{C}_{nv}$ is a dihedral group that is generated by a rotation $C_n$ and
a reflection  whose reflectional plane contains the rotational axis of $C_n$, and 
 $\mathcal{C}_{nh}$ is generated by a rotation $C_n$ and the reflection  whose reflectional plane is perpendicular to the axis of $C_n$. Further, $\mathcal{D}_{n}$ denotes a symmetry group that
is generated by a rotation $C_n$ and another $2$-fold rotation $C_2$ whose rotational axis is perpendicular to the one of $C_n$. $\mathcal{D}_{nh}$ and $\mathcal{D}_{nd}$ are generated by the generators $C_n$ and $C_2$ of a group $\mathcal{D}_{n}$ and by a reflection $s$.  In the case of $\mathcal{D}_{nh}$, the
mirror of $s$ is the plane that is perpendicular to the $C_n$ axis and
contains the origin (and hence contains the rotational axis of $C_2$), whereas in
the case of $\mathcal{D}_{nd}$, the mirror of $s$ is a plane that contains the $C_n$ axis and forms an angle of $\frac{\pi}{n}$ with the $C_2$ axis.  $S_{2n}$ is a symmetry group which is generated by a $2n$-fold improper rotation (i.e., a rotation by $\frac{\pi}{n}$ followed by a reflection in the plane which is perpendicular to the rotational axis).
The remaining seven types of symmetry groups in dimension 3 are related
to the Platonic solids and are placed into three divisions: the tetrahedral
groups $\mathcal{T}$ , $\mathcal{T}_d$ and $\mathcal{T}_h$, the octahedral groups $\mathcal{O}$ and $\mathcal{O}_h$, and the icosahedral
groups $\mathcal{I}$ and $\mathcal{I}_h$. See \cite{Bis} for details.

The only possible symmetry groups in dimension $2$ are $\mathcal{C}_s$ (reflection symmetry), $\mathcal{C}_n$ (rotational symmetry) and $\mathcal{C}_{nv}$ (dihedral symmetry). In Section \ref{sec:higher} we will also consider certain types of symmetry groups in dimensions $4$ and higher, and we will also make use of the Schoenflies notation for these groups.

\subsection{Symmetric Euclidean frameworks}\label{subsec:symfw}

Let $\Gamma$ be an abstract group, and  let $G$ be a  $\Gamma$-symmetric graph with respect to
the action $\theta:\Gamma\rightarrow \textrm{ Aut}(G)$.
Suppose also that $\Gamma$ acts on $\mathbb{R}^d$ via a homomorphism $\tau:\Gamma\rightarrow O(\mathbb{R}^d)$.

A framework $(G,p)$ is called \emph{$\Gamma$-symmetric} (with respect to $\theta$ and $\tau$) if 
\begin{equation}
\tau(\gamma) (p(i))=p(\theta(\gamma) (i)) \qquad \textrm{for all } \gamma\in \Gamma \textrm{ and all } i\in V.
\end{equation}

A $\Gamma$-symmetric framework $(G,p)$ (with respect to $\theta$ and $\tau$) is called \emph{$\Gamma$-regular} if the rigidity matrix (i.e. the matrix corresponding to the linear system in (\ref{infinmotioneq})) has maximum rank among all realisations of $G$ as a $\Gamma$-symmetric framework (with respect to $\theta$ and $\tau$).

An infinitesimal motion $u$ of a $\Gamma$-symmetric framework $(G,p)$ is called \emph{$\Gamma$-symmetric} (with respect to $\theta$ and $\tau$) if the velocity vectors exhibit the same symmetry as $(G,p)$, that is, if $\tau(\gamma)u_i=u_{\gamma i}$ for all $\gamma\in \Gamma$ and all $i\in  V.$
We say that $(G,p)$ is \emph{forced $\Gamma$-symmetric infinitesimally rigid} if every $\Gamma$-symmetric infinitesimal motion is trivial.

An important motivation for studying forced $\Gamma$-symmetric infinitesimal rigidity is that for $\Gamma$-regular frameworks, there exists a non-trivial $\Gamma$-symmetric infinitesimal motion if and only if there exists a non-trivial symmetry-preserving \emph{continuous} motion \cite{BSfinite} (see also \cite{guestfow,kangguest}). A key tool to study forced $\Gamma$-symmetric infinitesimal rigidity is the so-called orbit matrix (see \cite{SWorbit} for details). With the help of this matrix, combinatorial characterisations for $\Gamma$-regular forced $\Gamma$-symmetric rigidity in the plane (where the action $\theta:\Gamma\to \textrm{Aut}(G)$ is free on the vertex set) have been obtained for the groups $\mathcal{C}_s$, $\mathcal{C}_n$, $n\in \mathbb{N}$, and $\mathcal{C}_{(2n+1)v}$, $n\in \mathbb{N}$, in \cite{jkt} (see also \cite{mt1}). In particular we have the following result for reflectional or rotational symmetry groups.

\begin{thm}\label{thm:forcedbjplane} Let $n\geq 2$ and let $(G,p)$ be a $\mathbb{Z}_n$-regular bar-joint framework in $\mathbb{R}^2$ with respect to the action $\theta:\mathbb{Z}_n\to \textrm{Aut}(G)$ (which acts freely on $V$) and $\tau:\mathbb{Z}_n\to O(\mathbb{R}^2)$. Then $(G,p)$ is forced $\mathbb{Z}_n$-symmetric infinitesimally rigid if and only if the quotient $\mathbb{Z}_n$-gain graph $(G_0,\psi)$ of $G$ contains a spanning subgraph that is $(2,3,1)$-gain-tight.
\end{thm}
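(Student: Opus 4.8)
The plan is to study forced $\mathbb{Z}_n$-symmetric rigidity through the orbit rigidity matrix $O(G_0,\psi,p)$, whose columns are indexed by the vertex orbits (two columns per orbit, since $d=2$) and whose rows are indexed by the edge orbits. For a $\mathbb{Z}_n$-regular framework, the space of $\mathbb{Z}_n$-symmetric infinitesimal motions is exactly the kernel of this matrix (after identifying a symmetric motion with its velocities on orbit representatives), so $(G,p)$ is forced $\mathbb{Z}_n$-symmetric infinitesimally rigid precisely when $\rank O = 2|V_0|-t$, where $t$ is the dimension of the space of $\mathbb{Z}_n$-symmetric trivial infinitesimal motions.

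First I would compute $t$. When $\tau(\mathbb{Z}_n)$ is a rotation group $\mathcal{C}_n$ with $n\geq 2$, translations are reversed or rotated and hence not symmetric, so the only symmetric trivial motion is the infinitesimal rotation about the centre and $t=1$; when $n=2$ and $\tau(\mathbb{Z}_2)$ is a reflection group $\mathcal{C}_s$, the rotation and the translation perpendicular to the mirror are anti-symmetric, so the only symmetric trivial motion is the translation along the mirror line and again $t=1$. Thus in both admissible cases the target rank is $2|V_0|-1$, matching the value $m=1$ in the $(2,3,1)$ count, and the problem reduces to characterising when the orbit matrix of a $\mathbb{Z}_n$-regular realisation attains rank $2|V_0|-1$.

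The argument then splits into necessity and sufficiency. For necessity I would establish a Maxwell-type bound: if a set $F_0$ of edge orbits is independent in the orbit matrix then $|F_0|\leq 2|V(F_0)|-1$ in general, while $|F_0|\leq 2|V(F_0)|-3$ whenever $F_0$ is balanced. The balanced case reduces to the ordinary Laman bound, because a balanced gain subgraph can be switched to the all-identity gain assignment by conjugating gains along a spanning tree, at which point its orbit submatrix is row-equivalent to the rigidity matrix of an ordinary framework on $V(F_0)$, of rank at most $2|V(F_0)|-3$. These two bounds are exactly $(2,3,1)$-gain-sparsity, so forced rigidity forces $(G_0,\psi)$ to contain a $(2,3,1)$-gain-tight spanning subgraph.

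The harder direction is sufficiency, and here I would rely on a recursive (Henneberg-type) construction of $(2,3,1)$-gain-tight gain graphs: every such graph is built from a small base graph by a finite sequence of gain-graph operations, including vertex additions, edge splits, and operations that create unbalanced cycles. The main obstacle is to show that each operation preserves the maximal rank of the orbit matrix for $\mathbb{Z}_n$-regular realisations, that is, that one can always choose symmetric coordinates keeping the newly introduced orbit rows independent, while simultaneously respecting both the balanced bound $2|V|-3$ and the general bound $2|V|-1$, and doing so uniformly across the rotation and reflection cases. Since these inductive constructions and the accompanying geometric independence arguments have already been carried out for the groups $\mathcal{C}_n$ and $\mathcal{C}_s$ in \cite{jkt}, I would complete the proof by invoking those results and observing that both admissible realisations of $\mathbb{Z}_n$ share the identical symmetric-trivial-motion count, hence the identical combinatorial condition $(2,3,1)$-gain-tightness.
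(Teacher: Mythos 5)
Your proposal is correct and follows essentially the same route as the paper: the paper states this theorem as a known result and cites \cite{jkt} (see also \cite{mt1}) rather than proving it, and your outline --- the orbit rigidity matrix of \cite{SWorbit}, the count $t=1$ of $\mathbb{Z}_n$-symmetric trivial motions in both the $\mathcal{C}_n$ and $\mathcal{C}_s$ cases, the Maxwell-type necessity bounds via gain switching on balanced subgraphs, and the Henneberg-type inductive sufficiency argument --- is precisely the strategy of that cited source, which you invoke just as the paper does. The rank target $2|V_0|-1$ and the reduction of balanced sets to the ordinary Laman bound are both handled correctly, so there is no gap.
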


 For the groups $\mathcal{C}_{(2n)v}$  the problem of finding a combinatorial characterisation for $\Gamma$-regular forced $\Gamma$-symmetric rigidity is still open \cite{jkt}.

If a $\Gamma$-symmetric framework is forced $\Gamma$-symmetric infinitesimally rigid, then it may still have non-trivial infinitesimal motions that are not $\Gamma$-symmetric. The problem of analysing the infinitesimal rigidity  of an (incidentally) $\Gamma$-symmetric framework can be broken up into independent subproblems, one for each irreducible representation of the group $\Gamma$, by an appropriate block-decomposition of the rigidity matrix. (The block matrix corresponding to the trivial representation of $\Gamma$ is the orbit matrix.) Combinatorial characterisations of $\Gamma$-regular infinitesimally rigid frameworks in the plane have been obtained via this approach for a selection of cyclic groups (where the action $\theta:\Gamma\to \textrm{Aut}(G)$ is free on the vertex set) \cite{Ikeshita,IkTan,ST}. The problem remains open for all other groups.

We offer a sample result for the groups $\mathcal{C}_s$ and $\mathcal{C}_2$, as we will discuss the relationship between these groups with respect to infinitesimal rigidity in greater detail in Sections~\ref{sec:pairs} and \ref{sec:non-free}.

\begin{thm}\label{thm:combincidentalbj} Let $n\geq 2$ and let $(G,p)$ be a $\mathbb{Z}_2$-regular bar-joint framework in $\mathbb{R}^2$ with respect to the action $\theta:\mathbb{Z}_2\to \textrm{Aut}(G)$ (which acts freely on $V$) and $\tau:\mathbb{Z}_2\to O(\mathbb{R}^2)$, where $\tau(\mathbb{Z}_2)=\mathcal{C}_s$ or $\mathcal{C}_2$. Then $(G,p)$ is infinitesimally rigid if and only if the quotient $\mathbb{Z}_2$-gain graph $(G_0,\psi)$ of $G$ contains a spanning  $(2,3,i)$-gain-tight subgraph $(H_i,\psi_i)$ for each $i=1,2$.
\end{thm}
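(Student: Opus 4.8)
The plan is to study the rigidity matrix $R(G,p)$ via its block-decomposition with respect to the two irreducible representations of $\mathbb{Z}_2$, exactly as indicated in the discussion preceding the statement. Writing $\rho_0$ for the trivial and $\rho_1$ for the sign representation, a change of basis adapted to the $\mathbb{Z}_2$-action puts $R(G,p)$ into block-diagonal form with two blocks $R_0$ and $R_1$: here $R_0$ is the orbit matrix, whose kernel consists of the fully $\mathbb{Z}_2$-symmetric infinitesimal motions, while $R_1$ detects the anti-symmetric ones. Since the symmetric and anti-symmetric isotypic components together span the whole space of infinitesimal motions, $(G,p)$ is infinitesimally rigid if and only if both $R_0$ and $R_1$ attain their maximal possible rank. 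As $(G,p)$ is $\mathbb{Z}_2$-regular, these are the generic ranks of the two blocks, so the whole problem becomes combinatorial on $(G_0,\psi)$.

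First I would pin down the trivial infinitesimal motions in each block, since these fix the third sparsity parameters. A trivial motion $u_i=Sp_i+t$ lies in the $\rho_0$-component precisely when $\tau(\gamma)S=S\tau(\gamma)$ and $\tau(\gamma)t=t$, and in the $\rho_1$-component when $\tau(\gamma)S=-S\tau(\gamma)$ and $\tau(\gamma)t=-t$. A short direct computation for $\tau(\mathbb{Z}_2)=\mathcal{C}_s$ (a reflection) and for $\tau(\mathbb{Z}_2)=\mathcal{C}_2$ (a half-turn) shows that in \emph{both} cases the symmetric trivial motions form a $1$-dimensional space and the anti-symmetric trivial motions a $2$-dimensional space (for $\mathcal{C}_2$ a rotation, respectively two translations; for $\mathcal{C}_s$ a translation along the mirror, respectively a rotation together with the perpendicular translation). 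This numerical coincidence is precisely what underlies the $\mathcal{C}_s$--$\mathcal{C}_2$ pairing developed in Section~\ref{sec:pairs}, and it accounts for the values $m=1$ and $m=2$ in the two counts.

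For the symmetric block $R_0$ there is nothing essentially new: $R_0$ has full rank if and only if $(G,p)$ admits no non-trivial $\mathbb{Z}_2$-symmetric infinitesimal motion, which by Theorem~\ref{thm:forcedbjplane} applied with $n=2$ is equivalent to $(G_0,\psi)$ containing a spanning $(2,3,1)$-gain-tight subgraph. The content of the argument is therefore the anti-symmetric block $R_1$, for which I would prove that full rank is equivalent to the existence of a spanning $(2,3,2)$-gain-tight subgraph. Necessity is a Maxwell-type count: on a balanced subgraph the gains can be trivialised, and in the sign block the two lifted copies of each such edge impose the same scalar equation, so the system coincides with the ordinary rigidity system of a single copy and carries the full $3$-dimensional trivial-motion space, forcing the balanced bound $|F|\le 2|V(F)|-3$; the global bound $|F|\le 2|V(F)|-2$ instead records the $2$-dimensional space of anti-symmetric trivial motions of the whole framework established above.

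Sufficiency is the hard part and the main obstacle. One must show that every $(2,3,2)$-gain-tight graph can be generated from a small base graph by a finite list of Henneberg-type operations on gain graphs (symmetrised vertex additions and edge splits, together with the requisite gain moves), and that each such operation preserves the generic full rank of $R_1$. Setting up a recursive construction that is both complete for the $(2,3,2)$ class and rank-preserving for the sign block is where the real work lies; the analogous statements for the relevant cyclic groups have been obtained through this block-decomposition approach in \cite{Ikeshita,IkTan,ST}, which I would invoke or adapt to close this case. Combining the characterisation of full rank of $R_0$ with that of $R_1$, and using that infinitesimal rigidity is equivalent to both blocks having full rank, then yields the stated equivalence for each of $\mathcal{C}_s$ and $\mathcal{C}_2$.
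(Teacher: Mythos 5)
Your proposal is correct and follows essentially the same route as the paper: the paper offers no proof of this theorem but states it as a known result obtained via exactly the block-decomposition you describe --- splitting the rigidity matrix into the orbit matrix (trivial representation) and the sign-representation block, with the $1$- and $2$-dimensional spaces of symmetric and anti-symmetric trivial motions fixing the parameters in the $(2,3,1)$- and $(2,3,2)$-counts --- and cites \cite{Ikeshita,IkTan,ST} for the argument. Your isotypic trivial-motion computations for $\mathcal{C}_s$ and $\mathcal{C}_2$ are accurate, your use of Theorem~\ref{thm:forcedbjplane} for the orbit block matches the paper's framing, and your deferral of the hard sufficiency step (the recursive construction for the $(2,3,2)$-gain-tight class in the sign block) to \cite{ST} is precisely where the paper also places it.
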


\subsection{Symmetric frameworks on the sphere}\label{subsec:sph}

A spherical  framework $(G,p)$ in $\mathbb{S}^d$ is a bar-joint framework with  $p:V\rightarrow \mathbb{S}^d$, where the distance between two points is determined by their spherical distance, i.e. by their inner product (see Figure~\ref{fig:transfer_sph_ptline}). Alternatively, we may model $(G,p)$ as a `cone framework' $(G\star u,q)$ in $\mathbb{R}^{d+1}$. The cone graph $G\star u$ of $G$ is
 obtained from $G$ by adding the new cone vertex $u$ and the edges $\{u,v\}$
for all vertices $v\in V$. The cone framework $(G\star u,q)$ is obtained by fixing the cone vertex $u$ at the origin and setting $q|_V=p$. In the following we will assume that the points $p(V)$ linearly span $\mathbb{R}^{d+1}$. For the infinitesimal rigidity of such a framework we consider the linear system:
\begin{align}
\langle p_i, \dot{p}_j\rangle+\langle p_j, \dot{p}_i\rangle&=0 \qquad (\{i,j\}\in E) \label{eq:inner_inf}\\
\langle p_i, \dot{p}_i\rangle&=0 \qquad (i\in V). \label{eq:scale}
\end{align}
A map $\dot{p}:V\to \mathbb{R}^{d+1}$ is  said to be an {\em
infinitesimal motion} of $(G,p)$ if it satisfies this system of
linear constraints, and
$(G,p)$ is {\em infinitesimally
rigid} if the dimension of the space of its infinitesimal motions is
equal to ${d+1\choose 2}$ (i.e. every infinitesimal motion of $(G,p)$ is trivial).

A spherical framework $(G,p)$ in $\mathbb{S}^d$ is \emph{$\Gamma$-symmetric} (with respect to $\theta$ and $\tau$) if it is $\Gamma$-symmetric as a bar-joint framework in $\mathbb{R}^{d+1}$ (with respect to $\theta$ and $\tau$).  Forced $\Gamma$-symmetric infinitesimal rigidity for spherical frameworks is defined analogously as for bar-joint frameworks in $\mathbb{R}^d$.

A $\Gamma$-symmetric spherical framework $(G,p)$ (with respect to $\theta$ and $\tau$) in $\mathbb{S}^d$ is \emph{$\Gamma$-regular} if its spherical rigidity matrix (i.e. the matrix corresponding to the linear system  above) has maximum rank among all realisations of $G$ as a $\Gamma$-symmetric spherical framework (with respect to $\theta$ and $\tau$).

In \cite{NS}, combinatorial characterisations for $\Gamma$-regular forced $\Gamma$-symmetric rigidity on $\mathbb{S}^2$ (where the action $\theta:\Gamma\to \textrm{Aut}(G)$ is free on the vertex set)  have been established for the groups $\mathcal{C}_s$, $\mathcal{C}_n$, $n\in \mathbb{N}$, $\mathcal{C}_i$, $\mathcal{C}_{nv}$, $n$ odd, $\mathcal{C}_{nh}$, $n$ odd, and $\mathcal{S}_{2n}$, $n$ even. (For the groups $\mathcal{C}_s$ and $\mathcal{C}_n$, for example, the characterisation is the same as the one given in Theorem~\ref{thm:forcedbjplane} for bar-joint frameworks in $\mathbb{R}^2$.) For the remaining groups, this problem is still open. (See Table 1 in \cite{NS} for further details.) The infinitesimal rigidity for incidentally symmetric frameworks on $\mathbb{S}^2$ has not yet been investigated. We will discuss this further in Sections \ref{sec:transfer} and \ref{sec:pairs}. 

\subsection{Symmetric point-hyperplane frameworks}\label{subsec:ph}

Let  $G=(V_P\cup V_H, E)$ be a graph where the vertex set $V$ is partitioned into two sets $V_P$ and $V_H$.  This induces a partition of the edge set $E$  into the sets $E_{PP}, E_{PH}, E_{HH}$, where $E_{PP}$ consists of pairs of vertices in $V_P$, $E_{HH}$ consists of pairs of vertices in $V_H$, and $E_{PH}$ consists of pairs of vertices with one vertex in $V_P$ and the other one in $V_H$. We call such a graph $G$ a \emph{$PH$-graph}.

A \emph{point-hyperplane framework} in $\mathbb{R}^d$ is a triple $(G,p,\ell)$, where $G=(V_P\cup V_H, E)$ is a $PH$-graph, and $p:V_P\to \mathbb{R}^d$ and $\ell=(a,r):V_H\to \mathbb{S}^{d-1}\times \mathbb{R}$ are maps. These maps $p$ and $\ell$ are interpreted as follows: each vertex $i$ in $V_P$ is mapped to the point $p_i$ in $\mathbb{R}^d$ and each vertex $j$ in $V_H$ is mapped to the hyperplane in $\mathbb{R}^d$ given by  $\{x\in \mathbb{R}^d: \langle a_j, x\rangle+r_j=0\}$. A point-hyperplane framework in $\mathbb{R}^2$ is also called a \emph{point-line framework} \cite{JO} (see Figure~\ref{fig:transfer_sph_ptline}). In the following we will assume that the points $p(V_P)$ and hyperplanes $\ell(V_H)$ affinely span $\mathbb{R}^d$. Each edge in $E_{PP},E_{PH},E_{HH}$ indicates a point-point distance constraint, a point-hyperplane distance constraint, or a hyperplane-hyperplane
angle constraint, respectively. This leads to the following system of first order constraints (see \cite{EJNSTW} for details):
\begin{align}
\langle p_i - p_j, \dot{p}_i-\dot{p}_j\rangle&=0 && (\{i,j\}\in E_{PP}) \label{eq:line_inf1_euc} \\
\langle p_i, \dot{a}_j\rangle+\langle \dot{p}_i, a_j\rangle +\dot{r}_j&=0 && (\{i,j\}\in E_{PH}) \label{eq:line_inf2_euc}\\
\langle a_i, \dot{a}_j\rangle+\langle \dot{a}_i, a_j\rangle&=0 && (\{i,j\}\in E_{HH}) \label{eq:line_inf3_euc}\\
\langle a_i, \dot{a}_i\rangle&=0 && (i\in V_H).
\label{eq:a_inf_euc}
\end{align}
A map $(\dot{p},\dot\ell)$ is  said to be an {\em
infinitesimal motion} of $(G,p,\ell)$ if it satisfies this system of
linear constraints, and
$(G,p,\ell)$ is {\em infinitesimally
rigid} if the dimension of the space of its infinitesimal motions is
equal to ${d+1\choose 2}$ (i.e. every infinitesimal motion of $(G,p,\ell)$ is trivial).

\begin{rem}\label{rem:shift} As discussed in \cite{EJNSTW}, translating a hyperplane in a  point-hyperplane framework does not affect its infinitesimal rigidity properties. We may therefore assume without loss of generality that every hyperplane contains the origin.
\end{rem}

Let $G=(V_P\cup V_H, E)$ be a $PH$-graph. A \emph{$PH$-stabilising automorphism} of $G$ is an automorphism $\pi\in \textrm{Aut}(G)$ such that $\pi(x)\in V_P$ for all $x\in V_P$ and $\pi(y)\in V_H$ for all $v\in V_H$. The subgroup of all $\pi\in\textrm{Aut}(G)$ that are $PH$-stabilising is denoted by $\textrm{Aut}_{PH}(G)$. 
We only consider a $PH$-graph $G$ to be $\Gamma$-symmetric if there exists a group action $\theta:\Gamma\to \textrm{Aut}_{PH}(G)$.

Let $G=(V_P\cup V_H, E)$ be a $\Gamma$-symmetric $PH$-graph with respect to $\theta:\Gamma\to \textrm{Aut}_{PH}(G)$. Further, let $(G,p,\ell)$ be a point-hyperplane framework in $\mathbb{R}^d$ and suppose $\Gamma$ acts on $\mathbb{R}^d$ via a homomorphism $\tau:\Gamma\to O(\mathbb{R}^d)$. Then $(G,p,\ell)$ is called \emph{$\Gamma$-symmetric} (with respect to $\theta$ and $\tau$) if 
\begin{align}
\tau(\gamma) (p(i))&=p(\theta(\gamma) (i)) && \textrm{for all } \gamma\in \Gamma \textrm{ and all } i\in V_P\label{phsym1}\\
\tau(\gamma) (a(j))&=\pm a(\theta(\gamma) (j)) && \textrm{for all } \gamma\in \Gamma \textrm{ and all } j\in V_H\label{phsym2}\\
r(j)&=r(\theta(\gamma) (j)) && \textrm{for all } \gamma\in \Gamma \textrm{ and all } j\in V_H.
\label{phsym3}
\end{align}
An infinitesimal motion $(\dot{p},\dot\ell)$ of a $\Gamma$-symmetric point-hyperplane framework $(G,p,\ell)$ is called $\Gamma$-symmetric if it satisfies the constraints in (\ref{phsym1})-(\ref{phsym3}) and $(G,p,\ell)$  is called \emph{forced $\Gamma$-symmetric infinitesimally rigid} if every $\Gamma$-symmetric infinitesimal motion is trivial.
A $\Gamma$-symmetric point-hyperplane framework $(G,p,\ell)$ (with respect to $\theta$ and $\tau$) is \emph{$\Gamma$-regular} if its point-hyperplane rigidity matrix (i.e. the matrix corresponding to the linear system  (\ref{eq:line_inf1_euc})-(\ref{eq:a_inf_euc}) has maximum rank among all  realisations of $G$ as a $\Gamma$-symmetric point-hyperplane framework (with respect to $\theta$ and $\tau$).

The infinitesimal rigidity for incidentally or forced $\Gamma$-symmetric point-hyperplane frameworks has not yet been investigated. We will address these questions in the remaining sections of this paper. In particular, we will establish combinatorial characterisations for incidental and forced $\Gamma$-symmetric infinitesimal rigidity for some special classes of point-line frameworks in Sections~\ref{sec:pairs} and \ref{sec:non-free}.

\begin{figure}[htp]
        \begin{center} \includegraphics[scale=0.3]{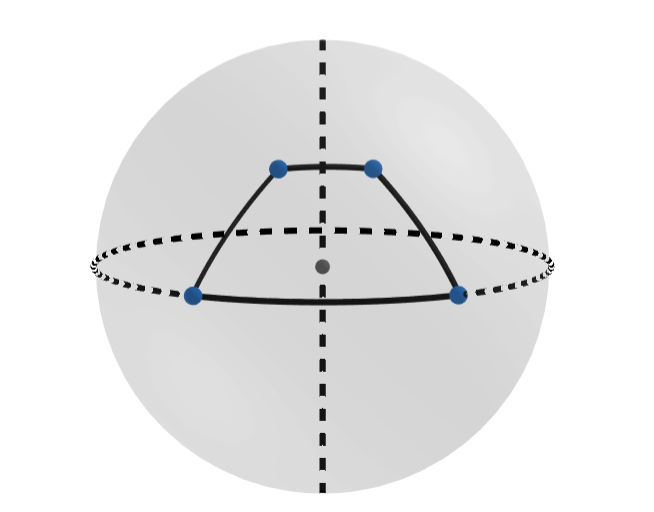} \hspace{0.2cm} 
         \includegraphics[scale=0.39]{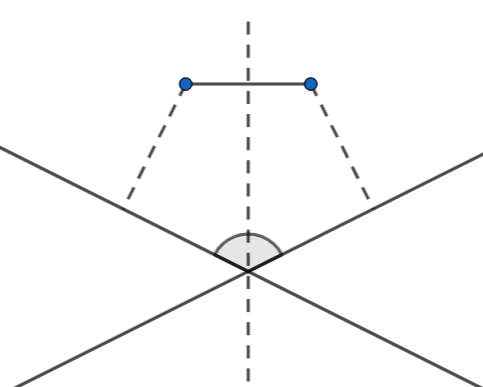}\hspace{0.2cm}
         \includegraphics[scale=0.45]{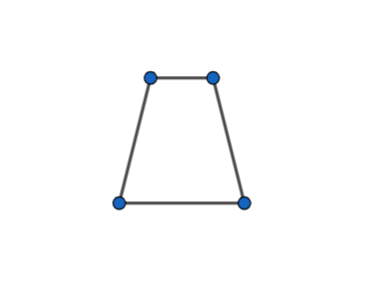} \hspace{0.1cm} 
         \includegraphics[scale=0.45]{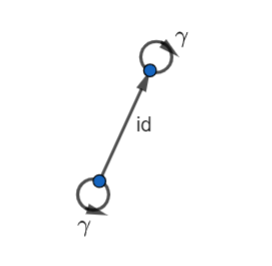}
        \end{center}   
\caption{A  spherical  framework $(G,q)$ with $\mathcal{C}_s$ symmetry in $\mathbb{S}^2$ with two points on the equator and the corresponding point-line framework $(G,p,\ell)$ with $\mathcal{C}_s$ symmetry in the affine plane $\mathbb{A}^2$ obtained from central projection. Both frameworks are infinitesimally flexible, but forced $\mathbb{Z}_2$-symmetric infinitesimally rigid. The underlying graph $G$ and its quotient $\mathbb{Z}_2$-gain graph (with $\mathbb{Z}_2=\langle \gamma \rangle$) are shown on the right.}
\label{fig:transfer_sph_ptline}
\end{figure}




\section{Transfer of infinitesimal rigidity}
\label{sec:transfer}

We first state a basic lemma which will be used repeatedly throughout this paper. 

\begin{lem}[\cite{BSWWSphere}]\label{lem:inv} 
Let $G=(V,E)$ be a graph and let $I\subseteq V$.
For a vector $q\in \mathbb{R}^{d+1}$, let $\iota$ denote the inversion operator defined by taking $(\iota \circ q)_i=-q_i$ if $i\in I$ and $(\iota \circ q)_i=q_i$ otherwise. If $(G,p)$ and $(G,\iota \circ p)$ are two frameworks on $\mathbb{S}^d$ then $(G,p)$ is infinitesimally rigid if and only if $(G,\iota \circ p)$ is infinitesimally rigid.
\end{lem}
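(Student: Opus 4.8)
The plan is to show that the inversion $\iota$ on the index set $I$ induces a linear bijection between the space of infinitesimal motions of $(G,p)$ and the space of infinitesimal motions of $(G,\iota\circ p)$, and that this bijection preserves triviality, so that the dimensions of the two motion spaces (and the dimensions of the trivial motion subspaces) coincide. Since infinitesimal rigidity on $\mathbb{S}^d$ is defined by the motion space having dimension exactly $\binom{d+1}{2}$, matching the dimensions gives the equivalence.

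First I would write down the defining linear system for both frameworks, namely the equations \eqref{eq:inner_inf}--\eqref{eq:scale}, with $q=p$ for the first framework and with $q=\iota\circ p$ for the second. The key observation is that sending $p_i\mapsto -p_i$ for $i\in I$ flips the sign of each coordinate of the affected points, and correspondingly I would propose the candidate map on motions that sends an infinitesimal motion $\dot p$ of $(G,p)$ to $\dot q$ defined by $\dot q_i=-\dot p_i$ for $i\in I$ and $\dot q_i=\dot p_i$ otherwise. I would then verify that $\dot q$ satisfies the system for $(G,\iota\circ p)$ by checking the three cases for an edge $\{i,j\}$ (both endpoints in $I$, neither in $I$, exactly one in $I$) together with the scaling constraint \eqref{eq:scale}. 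In each case the two sign flips either both appear, both are absent, or combine so that the bilinear expression $\langle (\iota\circ p)_i,\dot q_j\rangle+\langle(\iota\circ p)_j,\dot q_i\rangle$ reduces up to an overall sign to the original constraint $\langle p_i,\dot p_j\rangle+\langle p_j,\dot p_i\rangle=0$; the scaling constraint transforms to itself since the two sign flips cancel. This shows $\dot p\mapsto\dot q$ maps motions to motions, and since $\iota$ is clearly an involution the map is a linear isomorphism between the two motion spaces.

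The remaining point is that this isomorphism carries trivial motions to trivial motions, so that it descends to an isomorphism on the respective spaces of non-trivial motions; equivalently, I would simply note that both motion spaces always contain the $\binom{d+1}{2}$-dimensional space of trivial motions coming from $O(\mathbb{R}^{d+1})$, and since the isomorphism preserves the full motion space dimension, one space has dimension $\binom{d+1}{2}$ if and only if the other does. I expect the main obstacle to be the case of an edge $\{i,j\}$ with exactly one endpoint in $I$: here only one of the two sign flips occurs, so the naive substitution appears to introduce an uncancelled sign into the cross term $\langle p_i,\dot p_j\rangle$ relative to $\langle p_j,\dot p_i\rangle$. I would need to check carefully that the symmetric structure of the constraint \eqref{eq:inner_inf}, in which $i$ and $j$ appear symmetrically, means the overall expression still vanishes exactly when the original does (both terms pick up the same sign $-1$, so the equation is multiplied by $-1$ overall and is therefore still equivalent to zero). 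Verifying that this sign bookkeeping is consistent across all edge types is the one place where care is required; everything else is routine linear algebra.
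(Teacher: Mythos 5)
Your proposal is correct and takes essentially the same route as the proof the paper relies on: Lemma~\ref{lem:inv} is quoted from \cite{BSWWSphere}, and the argument there (also sketched in Section~7.4 of this paper, where $R_{\mathbb{S}}^d(G,p)$ is transformed into $R_{\mathbb{S}}^d(G,\epsilon p)$ by row and column sign multiplications) is precisely the matrix-level version of your kernel isomorphism $\dot p\mapsto \iota\circ\dot p$, with column scalings corresponding to your sign flips on motion vectors and row scalings to the overall negation of the mixed-edge constraints. Your treatment of the one delicate case (an edge with exactly one endpoint in $I$, where both cross terms pick up the factor $-1$ so the equation is merely negated) is exactly right, and your dimension/triviality bookkeeping is sound given the paper's standing assumption that $p(V)$ linearly spans $\mathbb{R}^{d+1}$, a property preserved by $\iota$.
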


Note that the proof uses the fact that the framework is on the sphere in an essential way.  The proof also shows that all other rigidity properties are preserved, including independence of rows, and isomorphic spaces of infinitesimal motions,  We will return to this in Section~7.4. (See Section~3.3 in \cite{BSWWSphere} for details.)

Our first theorem extends the transfer of Theorem \ref{thm:regulartransfer} (a),(b) to symmetric frameworks. (Note that the special case where there are no points on the equator (i.e. points that centrally project to points at infinity) was proved in  \cite{BSWWSphere}). We need the following definitions. For the sphere $\mathbb{S}^d$, we call the intersection of $\mathbb{S}^d$ with the linear hyperplane of $\mathbb{R}^{d+1}$ with normal vector $\be=(0,\ldots, 0,1)$ the \emph{equator} of $\mathbb{S}^d$.
Moreover, for a group $\Gamma$ and a representation $\tau:\Gamma\to O(\mathbb{R}^d)$, we let $\tilde\tau:\Gamma\rightarrow O(\mathbb{R}^{d+1})$ be the \emph{augmented representation} of $\tau$, i.e.,  $\tilde\tau(\gamma)=\begin{pmatrix} \tau(\gamma) & 0 \\ 0 & 1 \end{pmatrix}$.

\begin{thm}\label{thm:symtrans}
Let $G=(V,E)$ be a graph and $X\subseteq V$. Further, let $\tau(\Gamma)$ be a symmetry group in $\mathbb{R}^d$. 
 Then the following are equivalent:
\begin{itemize}
\item[(a)] $G$ can be realised as an infinitesimally rigid $\Gamma$-symmetric bar-joint framework on $\mathbb{S}^d$ (with respect to $\theta$ and $\tilde \tau$) such that the points assigned to $X$ lie on the equator.
\item[(b)] $G$ can be realised as an infinitesimally rigid $\Gamma$-symmetric point-hyperplane framework in $\mathbb{R}^d$ (with respect to $\theta$ and $\tau$) such that each vertex in $X$ is realised as a hyperplane and each vertex in $V\setminus X$ is realised as a point.
\end{itemize}
\end{thm}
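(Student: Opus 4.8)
The plan is to build on the central projection machinery underlying Theorem~\ref{thm:regulartransfer}, whose proof in \cite{EJNSTW} identifies the infinitesimal motions of a spherical (cone) framework on $\mathbb{S}^d$ with those of the point-hyperplane framework in $\mathbb{R}^d$ obtained by projecting from the centre onto the affine hyperplane $\{x_{d+1}=1\}$. Concretely, a vertex $i\in V\setminus X$ is sent to a point $q_i=(y_i,t_i)$ with $t_i\neq 0$, which projects to the point $p_i=y_i/t_i\in\mathbb{R}^d$, while a vertex $j\in X$ lies on the equator, $q_j=(a_j,0)$ with $a_j\in\mathbb{S}^{d-1}$, and projects to the hyperplane with normal $a_j$. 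Since this correspondence already preserves infinitesimal rigidity (it preserves the rigidity matrix rank and sends trivial motions, of dimension $\binom{d+1}{2}$ on both sides, to trivial motions), the only genuinely new task is to check that it matches the two notions of $\Gamma$-symmetry: symmetry with respect to $\tilde\tau$ on the sphere and symmetry with respect to $\tau$ in the point-hyperplane model.

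For the direction (a)$\Rightarrow$(b) I would start from a $\Gamma$-symmetric, infinitesimally rigid $(G,q)$ on $\mathbb{S}^d$ with $X$ on the equator and project it. The key observation is that $\tilde\tau(\gamma)$, being the augmented representation of $\tau$, fixes the last coordinate and hence preserves both the equator and the affine hyperplane $\{x_{d+1}=1\}$, on which it acts exactly as $\tau(\gamma)$. Thus for a point vertex $\tilde\tau(\gamma)q_i=q_{\gamma i}$ projects to $\tau(\gamma)p_i=p_{\gamma i}$, giving (\ref{phsym1}); for a hyperplane vertex $\tilde\tau(\gamma)(a_j,0)=(\tau(\gamma)a_j,0)$ forces $\tau(\gamma)a_j=a_{\gamma j}$, giving (\ref{phsym2}) with sign $+$; and by Remark~\ref{rem:shift} we may take $r_j\equiv 0$, so (\ref{phsym3}) is automatic. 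Hence the projected framework is $\Gamma$-symmetric with respect to $\tau$ and infinitesimally rigid.

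The converse (b)$\Rightarrow$(a) is where the main difficulty lies, because (\ref{phsym2}) only determines each normal up to sign, $\tau(\gamma)a_j=\pm a_{\gamma j}$, whereas an honest $\tilde\tau$-symmetric lift to the sphere requires the exact equality $\tilde\tau(\gamma)q_j=q_{\gamma j}$. I would resolve this by lifting each hyperplane vertex to $q_j=\epsilon_j(a_j,0)$ for signs $\epsilon_j\in\{\pm1\}$ to be chosen, and writing $\tau(\gamma)a_j=s_{\gamma,j}a_{\gamma j}$; exact symmetry then amounts to the coboundary condition $\epsilon_{\gamma j}=s_{\gamma,j}\epsilon_j$. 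Since the $s_{\gamma,j}$ satisfy the cocycle identity $s_{\gamma\delta,j}=s_{\gamma,\delta j}\,s_{\delta,j}$ inherited from $\tau$ being a homomorphism, and since $\theta$ acts freely on $V$, one can fix $\epsilon$ orbit by orbit (set $\epsilon_{j_0}=1$ on an orbit representative $j_0$ and $\epsilon_{\gamma j_0}=s_{\gamma,j_0}$, well defined by freeness), producing a genuinely $\tilde\tau$-symmetric spherical realisation. Different sign choices only alter the framework by the inversion operator $\iota$ of Lemma~\ref{lem:inv}, so infinitesimal rigidity is unaffected; combined with Theorem~\ref{thm:regulartransfer} this yields the required infinitesimally rigid $\Gamma$-symmetric spherical framework. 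I expect the sign/cocycle bookkeeping of this last step, together with the careful verification that central projection intertwines $\tilde\tau$ and $\tau$ at the level of infinitesimal motions and not merely of configurations, to be the crux of the argument.
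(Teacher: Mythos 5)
Your proposal is correct and follows essentially the same route as the paper's proof: the same central projection $q_i=\hat{p}_i/\|\hat{p}_i\|$ for point vertices and $q_j=(a_j,0)$ for hyperplane vertices, with the rigidity transfer itself imported from \cite{EJNSTW,BSWWSphere}, the symmetry bookkeeping reduced to the observation that $\tilde\tau$ fixes the last coordinate, and Lemma~\ref{lem:inv} used to absorb sign/hemisphere ambiguities. The one place you go beyond the paper is the sign issue in (b)$\Rightarrow$(a): the paper dismisses it in a single line (``without loss of generality, we may assume that the normal vectors \dots are oriented in such a way that we have a plus sign in (\ref{phsym2})''), whereas you actually justify this normalisation by writing $\tau(\gamma)a_j=s_{\gamma,j}a_{\gamma j}$, checking the cocycle identity, and solving the coboundary equation $\epsilon_{\gamma j}=s_{\gamma,j}\epsilon_j$ orbit by orbit; this is a correct and strictly more careful argument. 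Be aware, though, that your solution invokes freeness of $\theta$ on $V$, which is not among the stated hypotheses of Theorem~\ref{thm:symtrans} (the paper's blanket freeness convention is announced only for the gain-graph definitions in Section~\ref{sec:backsym}). This is not a defect of your proof but a genuine boundary of the statement: if some $j\in X$ is fixed by $\gamma$ with $\tau(\gamma)a_j=-a_j$ (e.g.\ a line fixed by a half-turn in the plane), then $s_{\gamma,j}=-1$ on a fixed vertex, no choice of $\epsilon_j$ exists, and indeed no $\tilde\tau$-symmetric realisation with $q_j$ on the equator exists at all, since $\tilde\tau(\gamma)$ then has no fixed points on the equator --- exactly the phenomenon the paper confronts later in Section~\ref{sec:non-free}, where symmetry is lost on the sphere but recovered after projection. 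So the paper's ``without loss of generality'' silently requires the same restriction your cocycle argument makes explicit. Finally, your handling of mixed hemispheres by projecting directly (rather than the paper's explicit step of inverting whole vertex orbits in the lower hemisphere via Lemma~\ref{lem:inv} before projecting) is equivalent, because orbits lie entirely above, below, or on the equator and the two procedures differ by exactly the inversions that Lemma~\ref{lem:inv} shows are harmless; it would be worth one sentence in your write-up making that identification explicit, since the upper-hemisphere hypothesis is how the rigidity transfer is stated in \cite{EJNSTW}.
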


\begin{proof}
Given a point-hyperplane framework $(G,p,\ell)$ in $\mathbb{R}^d$, we may construct a corresponding spherical framework $(G,q)$ with all points in the upper hemisphere
by setting $q(i)=\frac{\hat p_i}{\|\hat p_i\|}$, where $\hat p_i=(p_i,1)$, for all $i\in V_P$, and $q(j)=(a_j,0)$ for all $j\in V_H$.
It was shown in \cite{EJNSTW,BSWWSphere} that $(G,p,\ell)$ is infinitesimally rigid in $\mathbb{R}^d$ if and only if $(G,q)$ is infinitesimally rigid in $\mathbb{S}^d$
with all points in the upper hemisphere.  
We show that this operation also preserves the $\Gamma$ symmetry. 

Suppose $(G,p,\ell)$ is $\Gamma$-symmetric with respect to $\theta$ and $\tau$, i.e. equations (\ref{phsym1})-(\ref{phsym3}) are satisfied. Without loss of generality, we may assume that the normal vectors of the hyperplanes,  $a_j$, $j\in V_H$,  are oriented in such a way that we have a plus sign on the right hand side of equation (\ref{phsym2}).

 Let $i\in V_P$. Then for all $\gamma\in \Gamma$ we have $\|\hat p(i)\|=\|\tilde\tau(\gamma)\hat p(i)\|$ and $\tilde\tau(\gamma)\hat p(i)=\hat p(\theta(\gamma)(i))$. Thus,
$$\tilde\tau(\gamma) (q(i))= \tilde \tau(\gamma) \big(\frac{\hat p(i)}{\|\hat p(i)\|}\big)=\frac{1}{\|\hat p(i)\|}  \tilde\tau(\gamma) (\hat p(i)) =  \frac{1}{\|\hat p(\theta(\gamma)(i))\|}  \hat p(\theta(\gamma)( i)) = q(\theta(\gamma) (i)).$$

Now let $j\in V_H$. Then for all $\gamma\in \Gamma$ we have
$$\tilde \tau(\gamma)(q(j))=\tilde\tau(\gamma)((a(j),0))=(\tau(\gamma)(a(j)),0)=(a(\theta(\gamma)(j)),0)=q(\theta(\gamma)(j)).$$ 
This says that $(G,q)$ is $\Gamma$-symmetric with respect to $\theta$ and $\tilde\tau$, as desired.
 
 Conversely, if $(G,q)$ is $\Gamma$-symmetric with respect to $\theta$ and $\tilde\tau$, then it follows from $\tilde\tau(\gamma)(q(i))=q(\theta(\gamma)(i))$ for $i\in V\setminus X$ that $\tau(\gamma)(p(i))=p(\theta(\gamma)(i))$ for all $\gamma\in \Gamma$. Similarly, it follows from $\tilde\tau(\gamma)(q(j))=q(\theta(\gamma)(j))$ for $j\in X$ that $\tau(\gamma)(a(j))=a(\theta(\gamma)(j))$ for all $\gamma \in \Gamma$. Moreover, we set $r(j)=r(\theta(\gamma)( j))$ for all  $\gamma\in \Gamma$. Then   $(G,p,\ell)$ with $V_H=X$ and $V_P=V\setminus X$ is $\Gamma$-symmetric with respect to $\theta$ and $\tau$.

Finally, if we start with a $\Gamma$-symmetric spherical framework (with respect to $\theta$ and $\tilde \tau$) that has points above and below the equator, then, by definition of $\tilde{\tau}$, the vertices in a vertex orbit lie either all above, or all below, or all on the equator. Therefore, we may use Lemma~\ref{lem:inv} to invert all vertex orbits in the strict lower hemisphere to the upper hemisphere, preserving the symmetry and infinitesimal rigidity.
 
\end{proof}

Let $\Gamma$ be a group,  $\tau:\Gamma\to O(\mathbb{R}^d)$ be a representation, and $\tilde{\tau}$ be the augmented representation. For a $\Gamma$-symmetric graph $G=(V,E)$ (with respect to $\theta$) and a (possibly empty) set $X\subseteq V$, we say that a $\Gamma$-symmetric spherical framework (with respect to $\theta$ and $\tilde{\tau}$) with all points assigned to $X$ lying on the equator of $\mathbb{S}^d$ is \emph{$\Gamma$-$X$-regular} if the spherical rigidity matrix has maximum rank among all realisations of $G$ as a $\Gamma$-symmetric spherical framework (with respect to $\theta$ and $\tilde{\tau}$) with points assigned to $X$ lying on the equator. Clearly, a $\Gamma$-regular spherical framework is also $\Gamma$-$X$-regular. The converse, however, is in general not true.

Using techniques similar to \cite{BSWWSphere} we can see that the transfer above takes $\Gamma$-$X$-regular spherical frameworks to $\Gamma$-regular point-hyperplane frameworks.

\begin{lem}\label{lem:regtran}
Let $(G,q)$ be a $\Gamma$-symmetric framework on $\mathbb{S}^d$, with points assigned to a (possibly empty) subset $X$ of $V$ lying on the equator, and let $(G,p,\ell)$ be the corresponding $\Gamma$-symmetric point-hyperplane framework in $\mathbb{R}^d$ resulting from the transfer in Theorem \ref{thm:symtrans}. Then $(G,p,\ell)$ is $\Gamma$-regular if and only if $(G,q)$ is $\Gamma$-$X$-regular.
\end{lem}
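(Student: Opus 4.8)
The plan is to realise the transfer of Theorem~\ref{thm:symtrans} as a nullity-preserving bijection between the two families of realisations and then to reduce the equality of maximum ranks to a density argument. Write $\mathcal{P}$ for the set of $\Gamma$-symmetric point-hyperplane realisations of $G$ with $V_H=X$ and $V_P=V\setminus X$ (with hyperplanes through the origin, which is harmless by Remark~\ref{rem:shift}), and write $\mathcal{S}$ for the set of $\Gamma$-symmetric spherical realisations with the vertices of $X$ on the equator and every remaining vertex off the equator. First I would record that the construction in the proof of Theorem~\ref{thm:symtrans}, composed with the inversion of Lemma~\ref{lem:inv} applied to those (full) vertex orbits lying in the strict lower hemisphere, gives a bijection between $\mathcal{P}$ and $\mathcal{S}$ up to this inversion equivalence. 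The essential input, supplied by \cite{EJNSTW,BSWWSphere} and the remark following Lemma~\ref{lem:inv}, is that both central projection and inversion induce \emph{linear isomorphisms of the full spaces of infinitesimal motions}; hence for any corresponding pair the nullities of the point-hyperplane and spherical rigidity matrices coincide.

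Next I would note that each rigidity matrix has a fixed number of columns depending only on $(G,X)$: the point-hyperplane matrix has $c_{\mathrm{PH}}=d|V\setminus X|+(d+1)|X|$ columns and the spherical matrix has $c_{\mathrm{sph}}=(d+1)|V|$. Writing $\delta=c_{\mathrm{sph}}-c_{\mathrm{PH}}$, equality of nullities for a corresponding pair gives $\operatorname{rank}(\text{spherical}) = \operatorname{rank}(\text{point-hyperplane})+\delta$. Thus over corresponding pairs maximising one rank is the same as maximising the other, and if $R_{\mathrm{PH}}$ denotes the maximum rank over $\mathcal{P}$ then the maximum of the spherical rank over $\mathcal{S}$ equals $R_{\mathrm{PH}}+\delta$.

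The hard part is that $\Gamma$-$X$-regularity is defined by maximising the spherical rank over the strictly larger family $\mathcal{S}'$ of \emph{all} $\Gamma$-symmetric spherical realisations with $X$ on the equator, which also contains the degenerate configurations having a vertex of $V\setminus X$ on the equator; these have no central projection and so lie outside $\mathcal{S}$. To show that the maximum $R_{\mathrm{sph}}$ of the spherical rank over $\mathcal{S}'$ is already attained on $\mathcal{S}$, I would parametrise $\mathcal{S}'$ as a product of spheres --- one copy of $\mathbb{S}^d$ per orbit in $V\setminus X$ and one equatorial $\mathbb{S}^{d-1}$ per orbit in $X$ --- so that $\mathcal{S}$ is the open dense subset obtained by removing the equator from each $\mathbb{S}^d$ factor. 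Since the rank of the spherical rigidity matrix is lower semicontinuous on this parameter space, the locus attaining the overall maximum $R_{\mathrm{sph}}$ is open and nonempty, hence meets the dense set $\mathcal{S}$; therefore $R_{\mathrm{sph}}=R_{\mathrm{PH}}+\delta$.

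Finally I would assemble the equivalence. For the corresponding pair $(G,p,\ell)\in\mathcal{P}$ and $(G,q)\in\mathcal{S}$ we have $\operatorname{rank}(\text{spherical of }(G,q))=\operatorname{rank}(\text{point-hyperplane of }(G,p,\ell))+\delta$, so $(G,p,\ell)$ attains $R_{\mathrm{PH}}$ if and only if $(G,q)$ attains $R_{\mathrm{sph}}$; that is, $(G,p,\ell)$ is $\Gamma$-regular if and only if $(G,q)$ is $\Gamma$-$X$-regular. I expect the density/semicontinuity step to be the only genuinely delicate point; the rest is bookkeeping with column counts together with the motion-space isomorphisms of \cite{EJNSTW,BSWWSphere} and Lemma~\ref{lem:inv}.
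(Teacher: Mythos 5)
Your proposal is correct and is essentially the paper's own argument: the published proof likewise combines the rank correspondence induced by the transfer of Theorem~\ref{thm:symtrans} with the fact that the rank of either rigidity matrix cannot drop on an open neighbourhood within the relevant space of $\Gamma$-symmetric realisations, and your column-count bookkeeping via $\delta$ and your density/semicontinuity step merely make explicit what the paper compresses into the remark that maximum rank ``must be maintained over an open set'' (in particular, your explicit treatment of the degenerate configurations with a vertex of $V\setminus X$ on the equator, which lie outside the image of the transfer, is a welcome clarification). One small caveat: since Section~\ref{sec:transfer} does not assume the action $\theta$ is free on $V$, your parameter space should be a product of \emph{fixed sub-spheres} --- one factor per vertex orbit, cut out by the fixed subspace of that orbit's stabiliser under $\tilde{\tau}$ --- rather than full copies of $\mathbb{S}^d$; the density of the off-equator locus is unaffected, because every $\tilde{\tau}(\gamma)$ fixes the pole $(0,\ldots,0,1)$, so no factor can be contained in the equator and your argument goes through verbatim.
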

\begin{proof} By Theorem \ref{thm:symtrans}, $q$ gives the maximum rank for the spherical rigidity matrix for $G$ (among all  $\Gamma$-symmetric realisations of $G$ on $\mathbb{S}^d$  with points assigned to $X$ lying on the equator) if and only if $(p,\ell)$ gives the maximum rank of the point-hyperplane rigidity matrix for $G$ (among all $\Gamma$-symmetric point-hyperplane realisations of $G$ in $\mathbb{R}^d$ with $V_P=V\setminus X$ and $V_H=X$). Moreover, moving in open neighborhoods of $q$ within the space of $\Gamma$-symmetric realisations of $G$ in $\mathbb{S}^d$ with points assigned to $X$ lying on the equator, and of $(p,\ell)$ within the space of $\Gamma$-symmetric point-hyperplane realisations of $G$ in $\mathbb{R}^d$, respectively, the rank of the rigidity matrices cannot drop immediately, but must be maintained over an open set.
\end{proof}

From Theorem~\ref{thm:symtrans} and Lemma~\ref{lem:regtran}, we immediately obtain the following corollary.

\begin{cor}\label{cor:combincidental} Let  $G=(V,E)$ be a graph and $X\subseteq V$. Further, let $\tau(\Gamma)$ be a symmetry group in $\mathbb{R}^d$.
\begin{itemize}
\item[(a)] If $X\neq \emptyset$, then $\Gamma$-$X$-regular realisations of $G$ as a spherical framework on $\mathbb{S}^d$ (with respect to $\theta$ and $\tilde \tau$) are infinitesimally rigid if and only if $\Gamma$-regular realisations of $G$ as a point-hyperplane framework in $\mathbb{R}^d$ (with respect to $\theta$ and $\tau$) with $V_P=V\setminus X $ and $V_H=X$ are infinitesimally rigid.
\item[(b)] If $X= \emptyset$, then $\Gamma$-regular realisations of $G$ as a spherical framework on $\mathbb{S}^d$ (with respect to $\theta$ and $\tilde \tau$) are infinitesimally rigid if and only if $\Gamma$-regular realisations of $G$ as a bar-joint framework in $\mathbb{R}^d$ (with respect to $\theta$ and $\tau$) are infinitesimally rigid.
\end{itemize}
\end{cor}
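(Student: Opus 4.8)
The plan is to derive Corollary~\ref{cor:combincidental} as a purely formal consequence of Theorem~\ref{thm:symtrans} and Lemma~\ref{lem:regtran}, using the standard principle that infinitesimal rigidity is a maximum-rank condition. The key observation is that within any fixed class of symmetric realisations the rank of the relevant rigidity matrix is maximised precisely on the regular realisations; since the trivial infinitesimal motions always lie in the kernel, the rank value corresponding to infinitesimal rigidity is the largest rank that can possibly occur. Hence a regular realisation is infinitesimally rigid if and only if \emph{some} realisation in its class is infinitesimally rigid. In particular, all $\Gamma$-$X$-regular spherical realisations share the same spherical rigidity matrix rank and are therefore either all infinitesimally rigid or all infinitesimally flexible; the analogous all-or-nothing statement holds for the $\Gamma$-regular point-hyperplane realisations.

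For part (a), assuming $X\neq\emptyset$, I would first take an arbitrary $\Gamma$-$X$-regular spherical realisation $(G,q)$ with the points of $X$ on the equator and pass, via the transfer of Theorem~\ref{thm:symtrans}, to the corresponding point-hyperplane framework $(G,p,\ell)$ with $V_P=V\setminus X$ and $V_H=X$. Lemma~\ref{lem:regtran} guarantees that $(G,p,\ell)$ is then $\Gamma$-regular, and Theorem~\ref{thm:symtrans} guarantees that the transfer preserves infinitesimal rigidity in both directions, so $(G,q)$ is infinitesimally rigid if and only if $(G,p,\ell)$ is. Combining this single instance with the two all-or-nothing statements above yields the stated equivalence: $\Gamma$-$X$-regular spherical realisations are infinitesimally rigid exactly when $\Gamma$-regular point-hyperplane realisations are.

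For part (b), I would specialise to $X=\emptyset$. Then $V_H=X=\emptyset$, so the point-hyperplane framework degenerates to an ordinary bar-joint framework in $\mathbb{R}^d$, and the notion of a $\Gamma$-$X$-regular spherical framework coincides with that of a $\Gamma$-regular spherical framework, since there is no equatorial constraint left to impose. The argument of part (a) then applies verbatim.

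The substantive content is carried entirely by the two preceding results, so no serious analytic obstacle remains; the only point requiring care is the correct matching of the two regularity notions across the transfer, namely that $\Gamma$-$X$-regularity on the sphere corresponds under the transfer to $\Gamma$-regularity of the point-hyperplane framework and not to some strictly weaker or stronger condition. This is exactly what Lemma~\ref{lem:regtran} supplies: its open-set and rank-stability argument ensures that the maximum ranks on the two sides are attained simultaneously. With that correspondence in hand the corollary follows immediately.
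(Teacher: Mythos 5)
Your proposal is correct and follows essentially the same route as the paper: the paper derives the corollary immediately from Theorem~\ref{thm:symtrans} (rigidity-preserving transfer) together with Lemma~\ref{lem:regtran} (matching of $\Gamma$-$X$-regularity on the sphere with $\Gamma$-regularity for point-hyperplane frameworks), which is precisely your argument. Your explicit ``all-or-nothing'' rank observation---that all regular realisations in a class share the same rank and hence the same rigidity status, given the spanning assumptions that fix the dimension of the trivial motion space---is exactly the step the paper leaves implicit in the word ``immediately''.
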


\begin{rem}\label{rem1}
Since we have combinatorial characterisations of $\Gamma$-regular infinitesimally rigid frameworks in $\mathbb{R}^2$ (where the action $\theta:\Gamma\to \textrm{Aut}(G)$ is free on the vertex set)  for the groups $\mathcal{C}_s$, $\mathcal{C}_2$ and $\mathcal{C}_n$, $n$ odd \cite{ST}  (recall also Theorem~\ref{thm:combincidentalbj}), those results, together with Corollary~\ref{cor:combincidental} (b), immediately provide us with the corresponding combinatorial characterisations of $\Gamma$-regular infinitesimally rigid spherical frameworks on $\mathbb{S}^2$ for these groups.

However, we can only deduce complete combinatorial results regarding the infinitesimal rigidity of point-line frameworks from Corollary~\ref{cor:combincidental} (a) in some very special cases (see Theorems~\ref{thm:lamanptline1} and \ref{thm:lamanptline2}). This is because a $\Gamma$-$X$-regular spherical framework is in general not a $\Gamma$-regular spherical framework (even when $|X|=2$), and hence the combinatorial results for $\Gamma$-regular bar-joint frameworks in $\mathbb{R}^2$ (such as the ones mentioned above) do not apply here. Consider, for example, a framework $(G,p)$ on $\mathbb{S}^2$ with $\mathcal{C}_2$ symmetry, where the half-turn swaps two points $p_i$ and $p_j$, with $\{i,j\}\in E$. If $p_i$ and $p_j$ lie on the equator, then this edge will always be redundant, whereas otherwise this is not the case. 


\end{rem}

For the reflection group $\mathcal{C}_s$ in $\mathbb{R}^d$, we also obtain the following complete analogue of Theorem~\ref{thm:regulartransfer}. 

\begin{cor}\label{cor:mirrortransfer}
Let $G=(V,E)$ be a graph,  $X\subseteq V$, and $\Gamma=\mathbb{Z}_2$. Further let $\tau(\mathbb{Z}_2)$ be the symmetry group $\mathcal{C}_s$ in $\mathbb{R}^d$. 
Then the following are equivalent:
\begin{itemize}
\item[(a)] $G$ can be realised as an infinitesimally rigid $\Gamma$-symmetric bar-joint framework on $\mathbb{S}^d$ (with respect to $\theta$ and $\tilde\tau$) such that the points assigned to $X$ lie on the equator, but not on the line through the origin that is perpendicular to the mirror hyperplane.
\item[(b)] $G$ can be realised as an infinitesimally rigid $\Gamma$-symmetric point-hyperplane framework in $\mathbb{R}^d$ (with respect to $\theta$ and $\tau$) such that each vertex in $X$ is realised as a hyperplane,  no hyperplane is parallel to the mirror hyperplane, and each vertex in $V\setminus X$ is realised as a point.
\item[(c)]  $G$ can be realised as an infinitesimally rigid $\Gamma$-symmetric bar-joint framework in $\mathbb{R}^d$ (with respect to $\theta$ and $\tau$) such that the points assigned to $X$ lie on a hyperplane (perpendicular to the mirror hyperplane).
\end{itemize}
\end{cor}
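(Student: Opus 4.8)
The plan is to obtain the equivalence of (a) and (b) directly from Theorem~\ref{thm:symtrans}, and to establish the new equivalence with (c) by projecting the spherical framework onto a suitable reflection-invariant affine hyperplane. Throughout I would fix coordinates so that the mirror of $\mathcal{C}_s$ in $\mathbb{R}^d$ is $\{x_1=0\}$; then $\tau(\gamma)$ negates the coordinate $x_1$, the augmented representation $\tilde\tau(\gamma)$ negates $x_1$ and fixes $x_{d+1}$, and the line through the origin perpendicular to the mirror is the $x_1$-axis.

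For (a)$\Leftrightarrow$(b) I would simply observe that the transfer of Theorem~\ref{thm:symtrans} respects the two side conditions. A vertex $j\in X$ with $q(j)=(a_j,0)$ on the equator corresponds to a hyperplane with normal $a_j$, and this hyperplane is parallel to the mirror exactly when $a_j$ is parallel to $e_1$, i.e. when $q(j)$ lies on the $x_1$-axis. Hence the transfer restricts to a bijection between the realisations described in (a) and those described in (b), and since it preserves infinitesimal rigidity the equivalence is immediate.

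For (a)$\Leftrightarrow$(c) I would use projective invariance of infinitesimal rigidity \cite{CrW,Ran}. Coning the spherical framework to the origin of $\mathbb{R}^{d+1}$ preserves infinitesimal rigidity, and so does central projection from the origin onto any affine hyperplane $P$ missing the origin. I would choose $P=\{x:\langle\nu,x\rangle=c\}$ with $c\neq0$, $\nu\perp e_1$ and $\nu$ not parallel to $e_{d+1}$; then $\tilde\tau(\gamma)$ preserves $P$ and restricts to the reflection negating $x_1$, i.e. exactly $\mathcal{C}_s$ acting on $P\cong\mathbb{R}^d$, so the projected framework is $\mathbb{Z}_2$-symmetric. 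The equator $\{x_{d+1}=0\}\cap\mathbb{S}^d$ projects into the hyperplane $P\cap\{x_{d+1}=0\}$, which is perpendicular to the mirror $P\cap\{x_1=0\}$, so the vertices of $X$ land on a hyperplane perpendicular to the mirror, as required in (c). A point $q(j)=(a_j,0)$ projects to a finite point precisely when $\langle\nu,(a_j,0)\rangle\neq0$; the side condition in (a) guarantees $a_j\neq\pm e_1$, so the image of $a_j$ in $e_1^\perp$ is nonzero and a generic choice of $\nu$ (still orthogonal to $e_1$) makes all these inner products nonzero simultaneously. The reverse implication is the same construction run backwards: a realisation as in (c) is placed inside $P$, coned to the origin, and rescaled onto $\mathbb{S}^d$, whereupon the hyperplane carrying $X$ becomes the equator.

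The main obstacle is the choice of the projection hyperplane $P$: it must simultaneously be invariant under the reflection with induced action exactly $\mathcal{C}_s$, and send every vertex of $X$ to a finite point lying on a common hyperplane perpendicular to the mirror. This is precisely where the side conditions are essential --- a hyperplane parallel to the mirror (equivalently, an $X$-vertex on the perpendicular line) would be forced to infinity under any reflection-invariant projection --- so the restriction cannot be dropped. It is also worth noting that symmetry-related vertices $j$ and $\gamma j$ impose the same constraint on $\nu$, since $\nu\perp e_1$ annihilates the only coordinate in which $a_j$ and $\tau(\gamma)a_j$ differ; this consistency is what lets a single $\nu$ work for a whole symmetric configuration. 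The remaining verifications --- equivariance of the projection, transfer of infinitesimal rigidity together with the count $\binom{d+1}{2}$ of trivial motions, and generic avoidance of the finitely many bad normals --- are routine.
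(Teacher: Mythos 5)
Your proposal is correct and takes essentially the same route as the paper: the equivalence (a)$\Leftrightarrow$(b) is read off from Theorem~\ref{thm:symtrans} exactly as the paper does, and your central projection onto a tilted mirror-invariant hyperplane with normal $\nu\perp e_1$ is the passive formulation of the paper's active move, which rotates the spherical framework about the axis perpendicular to the mirror (the rotation preserving the reflection symmetry precisely because its axis is that perpendicular line) and then uses partial inversion (Lemma~\ref{lem:inv}) before reprojecting. In both arguments the side condition plays the identical role: it keeps the vertices of $X$ away from the two points $\pm e_1$, which any mirror-compatible choice of rotation or projection normal is forced to send to infinity.
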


\begin{proof} The equivalence of (a) and (b) follows immediately from Theorem \ref{thm:symtrans}. It remains to show that (b) and (c) are equivalent.

It was shown in \cite{EJNSTW} that $(G, p,\ell)$ is infinitesimally rigid as a point-hyperplane framework in $\mathbb{R}^d$ if and only if $(G,q^{-1}\circ \iota \circ \gamma \circ q)$ is infinitesimally rigid as a bar-joint framework in $\mathbb{R}^d$, where $q$ is obtained from $(p,\ell)$ as described in the proof of Theorem \ref{thm:symtrans} (and $q^{-1}$ denotes the inverse function), $\gamma$ is a rotation in $\mathbb{R}^{d+1}$ about an axis through the origin, and $\iota$ is the inversion operator defined by taking $(\iota \circ q)_i=-q_i$ if $i\in I$ and $(\iota \circ q)_i=q_i$ otherwise. It remains to show that these operations can be performed while preserving the mirror symmetry.  To preserve the mirror symmetry, the rotation $\gamma$ must be around the axis that is perpendicular to the mirror hyperplane.  Since, by assumption, there is no vertex on that axis, all points can be moved off the equator by rotating around that axis. We can clearly now use the inversion operator $\iota$ to move all points onto the strict upper hemisphere while preserving the mirror symmetry. This gives the result.
\end{proof}

Note that for any group containing a rotation the operation $\gamma$ will destroy the symmetry so the proof of Corollary~\ref{cor:mirrortransfer} is not sufficient to handle other groups. 


\section{Transfer of forced-symmetric infinitesimal rigidity}
\label{sec:forced}

As is standard for discussions on forced-symmetric rigidity,  we will assume for simplicity  throughout this section that $G=(V,E)$ is a $\Gamma$-symmetric graph with respect to $\theta$, where $\theta$ acts freely on $V$.

First we state the forced-symmetric analogue of Lemma~\ref{lem:inv}, which was also proved in \cite{BSWWSphere}. 

\begin{lem}\label{lem:invforced} Let $G=(V,E)$ be a $\Gamma$-symmetric graph with respect to $\theta:\Gamma\to \textrm{Aut}(G)$, and let $\tau(\Gamma)$ be a symmetry group in dimension $d$. Further let $I\subseteq V$ be a set of vertex orbits under the group action $\theta$.
For a vector $q\in \mathbb{R}^{d+1}$, let $\iota$ denote the inversion operator defined by taking $(\iota \circ q)_i=-q_i$ if $i\in I$ and $(\iota \circ q)_i=q_i$ otherwise. 
Then $(G,p)$ is a $\Gamma$-symmetric framework on $\mathbb{S}^d$ (with respect to $\theta$ and $\tilde{\tau}$) if and only if $(G,\iota \circ p)$ is. Moreover,  $(G,p)$ is forced $\Gamma$-symmetric infinitesimally rigid if and only if $(G,\iota \circ p)$ is forced $\Gamma$-symmetric infinitesimally rigid.
\end{lem}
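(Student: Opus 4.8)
The plan is to prove Lemma~\ref{lem:invforced} by bootstrapping off the non-symmetric inversion result of Lemma~\ref{lem:inv}, adding only the bookkeeping needed to track the group action. The key observation is that $I$ is a union of \emph{vertex orbits}, so the inversion operator $\iota$ commutes with the group action in the appropriate sense; this is exactly what will force the symmetry to be preserved.

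First I would verify that $(G,\iota\circ p)$ is again a $\Gamma$-symmetric framework with respect to $\theta$ and $\tilde\tau$. Fix $\gamma\in\Gamma$ and $i\in V$. Because $\tilde\tau(\gamma)$ is block-diagonal with an identity acting on the last coordinate, the sign $\varepsilon_i\in\{+1,-1\}$ that $\iota$ attaches to vertex $i$ (namely $\varepsilon_i=-1$ iff $i\in I$) satisfies $\tilde\tau(\gamma)(\varepsilon_i p_i)=\varepsilon_i\,\tilde\tau(\gamma)(p_i)$, since scalar multiplication by $\pm1$ commutes with the linear map $\tilde\tau(\gamma)$. Since $I$ is $\theta$-invariant (a union of orbits), we have $i\in I$ if and only if $\theta(\gamma)(i)\in I$, so $\varepsilon_i=\varepsilon_{\theta(\gamma)(i)}$. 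Combining these with the symmetry relation $\tilde\tau(\gamma)(p_i)=p_{\theta(\gamma)(i)}$ for $(G,p)$ gives
\[
\tilde\tau(\gamma)\big((\iota\circ p)_i\big)=\tilde\tau(\gamma)(\varepsilon_i p_i)=\varepsilon_i\,p_{\theta(\gamma)(i)}=\varepsilon_{\theta(\gamma)(i)}\,p_{\theta(\gamma)(i)}=(\iota\circ p)_{\theta(\gamma)(i)},
\]
which is precisely the $\Gamma$-symmetry condition for $(G,\iota\circ p)$. The reverse implication follows because $\iota$ is an involution, so applying the same computation to $\iota\circ p$ recovers $p$.

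Next I would transfer the forced-symmetric rigidity statement. The set of \emph{$\Gamma$-symmetric} infinitesimal motions of $(G,p)$ is a linear subspace of the full motion space, and a framework is forced $\Gamma$-symmetric infinitesimally rigid exactly when this subspace coincides with the trivial (symmetric) motions. Lemma~\ref{lem:inv} already supplies a linear isomorphism between the full infinitesimal motion spaces of $(G,p)$ and $(G,\iota\circ p)$ that preserves row independence and the trivial motions; it is the map sending a velocity field $\dot p$ to the field $\iota\circ\dot p$ (applying the same per-vertex signs). The remaining point is to check that this isomorphism restricts to a bijection between the two \emph{$\Gamma$-symmetric} motion spaces. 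This is again the commuting-signs computation: if $\dot p$ satisfies $\tilde\tau(\gamma)\dot p_i=\dot p_{\theta(\gamma)(i)}$, then the identical argument as above (using $\varepsilon_i=\varepsilon_{\theta(\gamma)(i)}$ and that $\pm1$ commutes with $\tilde\tau(\gamma)$) shows $\iota\circ\dot p$ is $\Gamma$-symmetric, and conversely. Since trivial symmetric motions are carried to trivial symmetric motions, the symmetric motion space of one framework is trivial iff that of the other is, giving the equivalence of forced $\Gamma$-symmetric infinitesimal rigidity.

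The main obstacle, such as it is, is conceptual rather than computational: one must be careful that $\iota$ genuinely commutes with $\tilde\tau(\gamma)$, and this is exactly where the hypothesis that $I$ is a union of vertex orbits (not an arbitrary subset) is used — without it, $\varepsilon_i$ and $\varepsilon_{\theta(\gamma)(i)}$ could differ and $\iota\circ p$ would fail to be symmetric. The use of the \emph{augmented} representation $\tilde\tau$ is also essential, since it guarantees $\iota$ acts only on ambient coordinates in a way that commutes with $\tilde\tau(\gamma)$; a non-augmented representation mixing the last coordinate could break this. Beyond these two points, the result is an essentially formal consequence of Lemma~\ref{lem:inv} together with the orbit-invariance of $I$, and I would cite Section~3.3 of \cite{BSWWSphere} for the underlying non-symmetric isomorphism rather than reprove it.
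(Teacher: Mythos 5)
Your proof is correct, and it matches the intended argument: the paper itself gives no proof of Lemma~\ref{lem:invforced}, citing \cite{BSWWSphere} instead, and your reconstruction (orbit-invariance of $I$ makes the per-vertex signs $\varepsilon_i$ constant on orbits, the signs commute with $\tilde\tau(\gamma)$, and the involution $\dot p\mapsto \iota\circ\dot p$ restricts to an isomorphism of the $\Gamma$-symmetric motion spaces carrying trivial motions $\dot p_i=Sp_i$ to trivial motions $S(\iota\circ p)_i$) is exactly the natural extension of the mechanism behind Lemma~\ref{lem:inv}, consistent with how the paper later reuses it (e.g.\ via orbit-matrix row operations in Theorem~\ref{thm:mirrorhtpair}). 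One small correction to your closing remarks: the augmented representation $\tilde\tau$ is \emph{not} what makes $\iota$ commute with the group action --- at each vertex $\iota$ acts as the scalar $\varepsilon_i=\pm1$ on all of $\mathbb{R}^{d+1}$, and scalars commute with every linear map, augmented or not; the only genuinely load-bearing hypothesis is that $I$ is a union of $\theta$-orbits, as you correctly identify.
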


Next we will extend the transfer results of Section~\ref{sec:transfer} to the context of forced $\Gamma$-symmetric rigidity, where the action is free on the vertices, by adapting the approach in \cite{EJNSTW}.

Let $(G,p)$ be a $\Gamma$-symmetric spherical framework and let $(G_0,\psi)=(V_0,E_0,\psi)$ be the $\Gamma$-gain graph of $G$. In the following we identify $V_0$ with a set of representative vertices for the vertex orbits under $\Gamma$.  Recall that $(G_0,\psi)$ is a directed (group-labeled) multigraph, so we denote an edge from a vertex  $i$ to a (not necessarily distinct) vertex $j$ by $(i,j)$. By definition, a $\Gamma$-symmetric infinitesimal motion $\dot{p}$ of $(G,p)$ satisfies the following linear system:

\begin{align}
\label{eq:inner_inf1}
\langle p_i, \tau(\psi((i,j)))\dot{p}_j\rangle+\langle \tau(\psi((i,j)))p_j, \dot{p}_i\rangle&=0 \qquad ((i,j)\in E_0) \\
\langle p_i, \dot{p}_i\rangle&=0 \qquad (i\in V_0). \label{eq:scale1}
\end{align}

In the following we will simplify notation by setting $\psi((i,j))=\psi_{ij}$.
For a $\Gamma$-symmetric point-hyperplane framework $(G,p,\ell)$ in $\mathbb{R}^d$, we first show the different types of geometric constraints to help the reader see where the linear system for a $\Gamma$-symmetric infinitesimal motion comes from:
\begin{align}
\label{eq:line_const1_euc}
\| p_i-\tau(\psi_{ij})p_j\|^2&=\text{const} && ((i,j)\in E_{0_{PP}}) \\
|\langle p_i, \tau(\psi_{ij})a_j\rangle+r_j|&=\text{const} && ((i,j)\in E_{0_{PH}}) \label{eq:line_const2_euc}\\
\langle a_i, \tau(\psi_{ij})a_j\rangle&=\text{const} && ((i,j)\in E_{0_{HH}}). \label{eq:line_const3_euc}
\end{align}
Since  $a_i\in\mathbb{S}^{d-1}$,
we also have the constraint
\begin{align*}
\langle a_i, {a}_i\rangle&=1 & (i\in V_{0_H}).
\end{align*}
Taking derivatives we get the following system of first order constraints (recall also Section~\ref{subsec:ph}):
\begin{align}
\langle p_i - \tau(\psi_{ij})p_j, \dot{p}_i-\tau(\psi_{ij})\dot{p}_j\rangle&=0 &&  ((i,j)\in E_{0_{PP}}) \label{eq:line_inf1_euc} \\
\langle p_i, \tau(\psi_{ij})\dot{a}_j\rangle+\langle \dot{p}_i, \tau(\psi_{ij})a_j\rangle +\dot{r}_j&=0 && ((i,j)\in E_{0_{PH}}) \label{eq:line_inf2_euc}\\
\langle a_i, \tau(\psi_{ij})\dot{a}_j\rangle+\langle \dot{a}_i, \tau(\psi_{ij})a_j\rangle&=0 &&  ((i,j)\in E_{0_{HH}}) \label{eq:line_inf3_euc}\\
\langle a_i, \dot{a}_i\rangle&=0 && (i\in V_{0_H}).
\label{eq:a_inf_euc}
\end{align}

We now translate $(G,p,\ell)$ to the point-hyperplane framework $(G,\hat{p},\ell)$ in affine space $\mathbb{A}^d$ by setting $\hat p_i=(p_i,1)$ for all $i\in V_{0_P}$. The system of constraints (\ref{eq:line_inf1_euc})-(\ref{eq:a_inf_euc}) then becomes:
\begin{align}
\langle \hat p_i - \tilde{\tau}(\psi_{ij})\hat p_j, \dot{\hat p}_i-\tilde{\tau}(\psi_{ij})\dot{\hat p}_j\rangle&=0 &&  ((i,j)\in E_{0_{PP}}) \label{eq:line_inf1_euc1} \\
\langle \hat p_i, \tilde{\tau}(\psi_{ij})\dot{\ell}_j\rangle+\langle \dot{\hat p}_i, \tilde{\tau}(\psi_{ij})\ell_j\rangle &=0 && ((i,j)\in E_{0_{PH}}) \label{eq:line_inf2_euc1}\\
\langle a_i, \tau(\psi_{ij})\dot{a}_j\rangle+\langle \dot{a}_i, \tau(\psi_{ij})a_j\rangle&=0 &&  ((i,j)\in E_{0_{HH}}) \label{eq:line_inf3_euc1}\\
\langle \dot{\hat p}_i, \bf{e}\rangle&=0 && (i\in V_{0_P})
\label{eq:a_inf_euc2}\\
\langle a_i, \dot{a}_i\rangle&=0 && (i\in V_{0_H}).
\label{eq:a_inf_euc1}
\end{align}
where $\bf{e}$ is the vector whose last coordinate is 1 and all others are equal to 0.

As in \cite{EJNSTW} the last coordinate
of $\ell_i$ is not important when analyzing the infinitesimal
rigidity of $(G,p,\ell)$ (recall also Remark~\ref{rem:shift}), and we may always assume that $\ell$ is a
map with $\ell:V_L\rightarrow \mathbb{S}^{d-1}\times \{0\}$. Under
this assumption, we can regard each $\ell_i$ as a point on the
equator $Q$ of $\mathbb{S}^{d}$ by identifying
$\mathbb{S}^{d-1}\times \{0\}$ with $Q$.
Hence (\ref{eq:a_inf_euc1}) can be written as $\langle \ell_i, \dot{\ell}_i\rangle=0$, i.e.
$\dot{\ell}_i\in T_{\ell_i}\mathbb{S}^d$ for all $i\in V_{0_H}$, where $T_xY$ (or simply $TY$ if $x$ is not relevant) denotes the tangent hyperplane at the point $x$ to the space $Y$. Moreover,
(\ref{eq:line_inf3_euc1}) gives
\[
\langle \ell_i,  \tilde{\tau}(\psi_{ij})\dot{\ell}_j\rangle+\langle \dot{\ell}_i,  \tilde{\tau}(\psi_{ij})\ell_j\rangle=0
\]
for all $(i,j)\in E_{0_{HH}}$.

Let $\mathbb{S}^d_{>0}$ denote the strict upper hemisphere of $\mathbb{S}^d$ and define
$\phi:\mathbb{A}^d\rightarrow \mathbb{S}^d_{>0}$ to be the central
projection, that is,
\begin{equation*}
\label{eq:phi1}
\phi(x)=\frac{x}{\|x\|}\qquad (x\in \mathbb{A}^d),
\end{equation*}
and for each $x\in \mathbb{A}^d$, define $\chi_x:T\mathbb{A}^d\rightarrow T_{\phi(x)}\mathbb{S}^d$ by
\[
\chi_x(m)=\frac{m-\langle m, x\rangle {\bf e}}{\|x\|} \qquad (m\in T\mathbb{A}^d).
\]

 It was shown in \cite{BSWWSphere} that Equation (\ref{eq:line_inf1_euc1}) can be rewritten as
\[
\langle \phi(\hat{p}_i),
\chi_{\tilde{\tau}(\psi_{ij})\hat{p}_j}(\tilde{\tau}(\psi_{ij})\dot{\hat{p}}_j)\rangle+\langle \phi((\tilde{\tau}(\psi_{ij})\hat{p}_j),
\chi_{\hat{p}_i}(\dot{\hat{p}}_i)\rangle=\frac{\langle \hat{p}_i-\tilde{\tau}(\psi_{ij})\hat{p}_j,
\dot{\hat{p}}_i-\tilde{\tau}(\psi_{ij})\dot{\hat{p}}_j\rangle}{\|\hat{p}_i\|\|\hat{p}_j\|}= 0
\]
for all $(i,j)\in E_0$ with $i,j\in V_{0_P}$. As in \cite{EJNSTW}
Equation (\ref{eq:line_inf2_euc1}) can also be rewritten as
\[
\langle \phi(\hat{p}_i), \tilde{\tau}(\psi_{ij})\dot{\ell}_j\rangle+\langle \psi_{\hat{p}_i}(\dot{\hat{p}}_i), \tilde{\tau}(\psi_{ij})\ell_j \rangle=
\frac{\langle \hat{p}_i, \tilde{\tau}(\psi_{ij})\dot{\ell}_j\rangle+\langle \dot{\hat{p}}_i, \tilde{\tau}(\psi_{ij})\ell_j\rangle}{\|\hat{p}_i\|}=0
\]
for all $(i,j)\in E_0$ with $i\in V_{0_P}$ and $j\in V_{0_H}$.

These equations imply that $(\dot{\hat{p}}, \dot{\ell})$ is a $\Gamma$-symmetric infinitesimal motion of $(G,\hat{p},\ell)$ if and only if
$\dot{q}$ is  a $\Gamma$-symmetric infinitesimal motion of $(G,q)$, where
$(G,q)$ is the bar-joint framework on $\mathbb{S}_{\geq 0}^d$ (i.e., the upper hemisphere including the equator) given by
\begin{equation}
\label{eq:phi2}
q_i=\begin{cases}
\phi(\hat{p}_i) & (i\in V_{0_P}) \\
(a_i,0) & (i\in V_{0_H}),
\end{cases}
\end{equation}
and
$\dot{q}_i\in T_{q_i}\mathbb{S}^d$ is given by
\begin{equation*}
\label{eq:phi3}
\dot{q}_i=\begin{cases}
\chi_{\hat{p}_i}(\dot{\hat{p}}_i) & (i\in V_{0_P}) \\
\dot{\ell}_i & (i\in V_{0_H}).
\end{cases}
\end{equation*}

 Since each $\chi_x$ is bijective and hence invertible, this gives us an isomorphism between the spaces of infinitesimal motions of
$(G,\hat{p},\ell)$ and $(G, q)$.  Moreover, by applying the above isomorphism to a framework on the complete graph that affinely spans $\mathbb{A}^d$, we see that the spaces of trivial $\Gamma$-symmetric infinitesimal motions have the same dimension. Finally,  we can simply identify $\mathbb{A}^d$ with $\mathbb{R}^d$, i.e. the infinitesimal rigidity properties of $(G,\hat{p},\ell)$ in $\mathbb{A}^d$ are the same as for $(G,p,\ell)$ in $\mathbb{R}^d$ .

As in \cite[Theorem 2.2]{EJNSTW} the above discussion allows us to obtain the following analogue of Theorem~\ref{thm:symtrans}.

\begin{thm}
Let $G=(V,E)$ be a $\Gamma$-symmetric graph (with respect to $\theta$), where the action $\theta$ acts freely on $V$. Further, let $X\subseteq V$, and let $\tau(\Gamma)$ be a symmetry group in $\mathbb{R}^d$.  Then the following are equivalent:
\begin{itemize}
\item[(a)] $G$ can be realised as a forced $\Gamma$-symmetric infinitesimally rigid bar-joint framework on $\mathbb{S}^d$ (with respect to $\theta$ and $\tilde \tau$) such that the points assigned to $X$ lie on the equator.
\item[(b)]  $G$ can be realised as a  forced $\Gamma$-symmetric infinitesimally rigid  point-hyperplane framework in $\mathbb{R}^d$ (with respect to $\theta$ and $\tau$) such that each vertex in $X$ is realised as a hyperplane and each vertex in $V\setminus X$ is realised as a point.
\end{itemize}
\end{thm}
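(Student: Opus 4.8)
The plan is to assemble the explicit isomorphism constructed in the discussion preceding the statement together with the forced-symmetric inversion Lemma~\ref{lem:invforced}, following the same template as the proof of Theorem~\ref{thm:symtrans}. The computation above already shows that, for a $\Gamma$-symmetric point-hyperplane framework $(G,p,\ell)$ with every hyperplane through the origin (justified by Remark~\ref{rem:shift}), the central projection $q$ defined by (\ref{eq:phi2}) yields a $\Gamma$-symmetric spherical framework $(G,q)$ on the closed upper hemisphere $\mathbb{S}^d_{\geq 0}$ with the points assigned to $X=V_H$ on the equator, and that the maps $\chi_{\hat p_i}$ induce an isomorphism between the space of $\Gamma$-symmetric infinitesimal motions of $(G,\hat p,\ell)$ and that of $(G,q)$ taking trivial motions to trivial motions of the same dimension. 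Hence $(G,p,\ell)$ is forced $\Gamma$-symmetric infinitesimally rigid if and only if $(G,q)$ is. This proves (b)$\Rightarrow$(a) directly, since the resulting $(G,q)$ has $X$ on the equator and all remaining points in the strict upper hemisphere.

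For (a)$\Rightarrow$(b) I first reduce to the case where every vertex lies in $\mathbb{S}^d_{\geq 0}$. Starting from a forced $\Gamma$-symmetric infinitesimally rigid spherical framework $(G,q)$ with $X$ on the equator, observe that since $\tilde\tau(\gamma)$ fixes the last coordinate for all $\gamma$, each vertex orbit lies entirely in the strict upper hemisphere, entirely in the strict lower hemisphere, or entirely on the equator. Letting $I$ be the union of the vertex orbits in the strict lower hemisphere, Lemma~\ref{lem:invforced} applies (its hypothesis requires $I$ to be a union of vertex orbits) and shows that $(G,\iota\circ q)$ is again a $\Gamma$-symmetric spherical framework that is forced $\Gamma$-symmetric infinitesimally rigid, now with all points in $\mathbb{S}^d_{\geq 0}$ and $X$ still on the equator. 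Choosing orientations of the normal vectors so that a plus sign holds on the right-hand side of (\ref{phsym2}), I then recover $p_i$ from $q_i=\phi(\hat p_i)$ for $i\in V\setminus X$ and $a_j$ from $q_j=(a_j,0)$ for $j\in X$, with each $r_j$ chosen $\Gamma$-invariantly exactly as in the proof of Theorem~\ref{thm:symtrans}. This produces a $\Gamma$-symmetric point-hyperplane framework $(G,p,\ell)$ with $V_H=X$ and $V_P=V\setminus X$, and the isomorphism recalled above transfers forced $\Gamma$-symmetric infinitesimal rigidity back to it.

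The only genuinely new feature compared with the incidental case of Theorem~\ref{thm:symtrans} is that every step must respect the $\Gamma$-action orbit-wise rather than vertex-wise. This is precisely what the hypotheses supply: the gains $\psi_{ij}$ are already built into the constraint systems displayed above for spherical and point-hyperplane frameworks, so the projection and its inverse automatically preserve forced symmetry, and Lemma~\ref{lem:invforced} is stated exactly for inversions along unions of vertex orbits. Consequently the one point that genuinely needs checking is that the inversion used in (a)$\Rightarrow$(b) does not break the symmetry, which follows from the block-diagonal form of $\tilde\tau$; the remaining verifications coincide with, or are lighter than, those already carried out for Theorem~\ref{thm:symtrans}, and I expect this symmetry-compatibility of the inversion to be the only subtle step.
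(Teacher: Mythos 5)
Your proposal is correct and follows essentially the same route as the paper, whose proof of this theorem consists precisely of the preceding discussion (the $\chi$-induced isomorphism between the spaces of $\Gamma$-symmetric infinitesimal motions of $(G,\hat p,\ell)$ and $(G,q)$, with trivial motions matched via the complete-graph argument) combined with Lemma~\ref{lem:invforced} to invert the vertex orbits in the strict lower hemisphere, exactly as in the final step of Theorem~\ref{thm:symtrans}. Your observation that the block-diagonal form of $\tilde\tau$ forces each orbit to lie entirely above, below, or on the equator is the same symmetry-compatibility point the paper makes there, so nothing is missing.
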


 As in Corollary \ref{cor:combincidental} we may deduce the following.

\begin{cor}\label{cor:combforced} 
Let  $G=(V,E)$ be a $\Gamma$-symmetric graph (with respect to $\theta$), where the action $\theta$ acts freely on $V$. Further, let $X\subseteq V$, and let $\tau(\Gamma)$ be a symmetry group in $\mathbb{R}^d$.
\begin{itemize}
\item[(a)] If $X\neq \emptyset$, then $\Gamma$-$X$-regular realisations of $G$ as a spherical framework on $\mathbb{S}^d$ (with respect to $\theta$ and $\tilde \tau$) are forced $\Gamma$-symmetric infinitesimally rigid if and only if $\Gamma$-regular realisations of $G$ as a point-hyperplane framework in $\mathbb{R}^d$ (with respect to $\theta$ and $\tau$) with $V_P=V\setminus X $ and $V_H=X$ are forced $\Gamma$-symmetric infinitesimally rigid.
\item[(b)] If $X= \emptyset$, then $\Gamma$-regular realisations of $G$ as a spherical framework on $\mathbb{S}^d$(with respect to $\theta$ and $\tilde \tau$) are forced $\Gamma$-symmetric infinitesimally rigid if and only if $\Gamma$-regular realisations of $G$ as a bar-joint framework in $\mathbb{R}^d$ (with respect to $\theta$ and $\tau$) are forced $\Gamma$-symmetric infinitesimally rigid.
\end{itemize}
\end{cor}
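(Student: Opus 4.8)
The plan is to follow the derivation of Corollary~\ref{cor:combincidental} line for line, replacing ordinary infinitesimal rigidity by forced $\Gamma$-symmetric infinitesimal rigidity and using the preceding theorem (the forced analogue of Theorem~\ref{thm:symtrans}) in place of Theorem~\ref{thm:symtrans}. For part~(a) I would begin with a $\Gamma$-$X$-regular spherical realisation $(G,q)$ on $\mathbb{S}^d$ having the points of $X$ on the equator, pass to the corresponding point-hyperplane framework $(G,p,\ell)$ given by the transfer, apply Lemma~\ref{lem:regtran} to conclude that $(G,p,\ell)$ is $\Gamma$-regular, and then apply the preceding theorem to move forced $\Gamma$-symmetric infinitesimal rigidity back and forth between $(G,q)$ and $(G,p,\ell)$. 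Part~(b) is the specialisation $X=\emptyset$: there are then no equator points, the point-hyperplane framework reduces to an ordinary bar-joint framework with $V_P=V$, and $\Gamma$-$X$-regularity coincides with $\Gamma$-regularity, so the same two ingredients give the claim.

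The one genuinely new point, which I expect to be the main obstacle, is a mismatch of objects: forced $\Gamma$-symmetric infinitesimal rigidity is governed by the orbit matrix, whereas the hypotheses $\Gamma$-regular and $\Gamma$-$X$-regular are phrased in terms of the \emph{full} rigidity matrix. To turn the equivalence into a statement about regular realisations (rather than about a single framework and its transferred image), I must verify that forced rigidity is constant across the set of all $\Gamma$-$X$-regular, respectively $\Gamma$-regular, realisations. I would do this using the symmetry-adapted block-decomposition of the rigidity matrix into irreducible-representation blocks, recalling from Section~\ref{sec:backsym} that the orbit matrix is exactly the trivial-representation block. Since the rank of the full matrix is the sum of the ranks of these blocks, and each block attains its maximum rank on an open dense subset of the (equator-constrained) realisation space, the maximum of the full rank equals the sum of the block maxima; consequently any realisation attaining the maximal full rank---that is, any $\Gamma$-regular or $\Gamma$-$X$-regular realisation---must attain the maximum in every block simultaneously, in particular in the orbit matrix. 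Thus $\Gamma$-regularity and $\Gamma$-$X$-regularity each force maximal orbit-matrix rank, and forced $\Gamma$-symmetric infinitesimal rigidity is a well-defined property that is constant on each regularity class. This argument applies verbatim to both the spherical rigidity matrix and the point-hyperplane rigidity matrix.

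With this bridge the corollary is immediate. A $\Gamma$-$X$-regular framework $(G,q)$ has maximal orbit-matrix rank; its transfer $(G,p,\ell)$ is $\Gamma$-regular by Lemma~\ref{lem:regtran}, hence also of maximal orbit-matrix rank; and the preceding theorem identifies forced $\Gamma$-symmetric infinitesimal rigidity of $(G,q)$ with that of $(G,p,\ell)$. Therefore $\Gamma$-$X$-regular spherical realisations are forced $\Gamma$-symmetric infinitesimally rigid exactly when the $\Gamma$-regular point-hyperplane realisations are, which is~(a); the case $X=\emptyset$ yields~(b). Apart from the block-decomposition observation, the whole argument is a routine transcription of the incidental case, and I would not expect to need any geometric input beyond Lemma~\ref{lem:invforced}, which is already built into the preceding theorem.
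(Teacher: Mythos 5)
Your proposal is correct, and its core is exactly the paper's (implicit) argument: the paper proves this corollary with the single remark ``As in Corollary~\ref{cor:combincidental}'', i.e.\ combine the forced-symmetric transfer theorem of Section~\ref{sec:forced} with Lemma~\ref{lem:regtran}, using Lemma~\ref{lem:invforced} to handle points below the equator. Where you go beyond the paper is the middle paragraph: the paper never addresses the mismatch you identify between the full rigidity matrix (which defines $\Gamma$-regularity and $\Gamma$-$X$-regularity) and the orbit matrix (which governs forced rigidity), and your block-decomposition argument --- full rank equals the sum of the irreducible-representation block ranks, each block is maximised on an open dense subset of the connected realisation space, hence maximal full rank forces maximal rank in every block, in particular in the trivial-representation block --- is a correct and standard way to close that gap. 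It is worth noting that this extra step is not merely cosmetic: in part~(a) one could avoid it, since the transfer (after orbit inversions) carries the $\Gamma$-$X$-regular spherical class bijectively onto the $\Gamma$-regular point-hyperplane class and preserves forced rigidity pointwise, so the universally quantified equivalence follows directly; but in part~(b) a $\Gamma$-regular spherical realisation may happen to have points on the equator and then does not project to a bar-joint framework at all, so one genuinely needs forced rigidity to be constant on the $\Gamma$-regular class (or an equivalent orbit-rank semicontinuity argument) to pass from a perturbed representative back to the original framework. Your observation thus supplies a justification the paper leaves tacit, at the mild cost of invoking the representation-theoretic decomposition; the only detail to flag is that density of the maximal-rank locus in each block uses irreducibility of the parameter space (a product of spheres, and equator sub-spheres, for free actions), which holds here but deserves a sentence.
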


Note that (b) was already used in \cite{NS}.

As for incidental symmetry, for the reflection group $\mathcal{C}_s$ in $\mathbb{R}^d$, we also obtain the following complete analogue of Theorem~\ref{thm:regulartransfer}, whose proof is similar to Corollary \ref{cor:mirrortransfer}.

\begin{cor}\label{cor:mirrortransferforced}
Let $G=(V,E)$ be a graph,  $X\subseteq V$, and $\Gamma=\mathbb{Z}_2$. Further let $\tau(\mathbb{Z}_2)$ be the symmetry group $\mathcal{C}_s$ in $\mathbb{R}^d$. 
Then the following are equivalent:
\begin{itemize}
\item[(a)] $G$ can be realised as a forced $\Gamma$-symmetric infinitesimally rigid $\Gamma$-symmetric bar-joint framework on $\mathbb{S}^d$ (with respect to $\theta$ and $\tilde\tau$) such that the points assigned to $X$ lie on the equator, but not on the line through the origin that is perpendicular to the mirror hyperplane.
\item[(b)] $G$ can be realised as a  forced $\Gamma$-symmetric infinitesimally rigid $\Gamma$-symmetric point-hyperplane framework in $\mathbb{R}^d$ (with respect to $\theta$ and $\tau$) such that each vertex in $X$ is realised as a hyperplane,  no hyperplane is parallel to the mirror hyperplane, and each vertex in $V\setminus X$ is realised as a point.
\item[(c)]  $G$ can be realised as a  forced $\Gamma$-symmetric   infinitesimally rigid   $\Gamma$-symmetric bar-joint framework in $\mathbb{R}^d$ (with respect to $\theta$ and $\tau$) such that the points assigned to $X$ lie on a hyperplane (perpendicular to the mirror hyperplane).
\end{itemize}
\end{cor}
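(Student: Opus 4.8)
The plan is to follow the same strategy as in the proof of Corollary~\ref{cor:mirrortransfer}, but carefully tracking the forced-symmetric rigidity properties rather than the incidental ones. The equivalence of (a) and (b) is immediate from the forced-symmetric transfer theorem (the preceding unnumbered Theorem), so the entire content lies in establishing the equivalence of (b) and (c). The key tool is the following fact from \cite{EJNSTW}: a point-hyperplane framework $(G,p,\ell)$ in $\mathbb{R}^d$ is infinitesimally rigid if and only if the bar-joint framework $(G,q^{-1}\circ \iota \circ \gamma \circ q)$ in $\mathbb{R}^d$ is infinitesimally rigid, where $q$ is the associated spherical framework constructed in the proof of Theorem~\ref{thm:symtrans}, $\gamma$ is a rotation of $\mathbb{R}^{d+1}$ about an axis through the origin moving points off the equator, and $\iota$ is the inversion operator. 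My task is to verify that each of these operations can be performed so as to preserve the $\mathcal{C}_s$-symmetry, and moreover to preserve \emph{forced} $\Gamma$-symmetric infinitesimal rigidity specifically.

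First I would observe that, as in Corollary~\ref{cor:mirrortransfer}, the rotation $\gamma$ must be chosen about the axis perpendicular to the mirror hyperplane in order not to destroy the reflection symmetry; such a rotation commutes with $\tilde\tau(\mathbb{Z}_2)$. The hypothesis in (a)/(b) that no point of $X$ lies on that perpendicular axis (equivalently, no hyperplane is parallel to the mirror hyperplane) is exactly what guarantees that the rotation $\gamma$ moves all equatorial points strictly off the equator without any point being fixed on the rotation axis. Since $\gamma$ commutes with the symmetry and acts by the augmented representation $\tilde\tau$, it sends a $\Gamma$-symmetric spherical realisation to a $\Gamma$-symmetric spherical realisation, and the orbit structure is preserved. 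Then, exactly as in Lemma~\ref{lem:invforced}, the inversion operator $\iota$ may be applied orbit-wise (inverting entire vertex orbits in the strict lower hemisphere) to bring all points into the strict upper hemisphere while maintaining $\Gamma$-symmetry. The crucial upgrade over the incidental case is that Lemma~\ref{lem:invforced} explicitly preserves \emph{forced} $\Gamma$-symmetric infinitesimal rigidity, not merely ordinary infinitesimal rigidity, so the chain of operations transfers the forced-symmetric rigidity property throughout.

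The main obstacle I anticipate is verifying that the central-projection/rotation correspondence from \cite{EJNSTW} is compatible with the \emph{forced-symmetric} count rather than the full count; that is, one must check that the isomorphism between the spaces of infinitesimal motions restricts to an isomorphism between the subspaces of $\Gamma$-symmetric infinitesimal motions and simultaneously matches the trivial $\Gamma$-symmetric motions on both sides. This is precisely the kind of bookkeeping carried out in the long computation preceding the (unnumbered) forced transfer theorem above, where the maps $\phi$ and $\chi_x$ were shown to intertwine the $\Gamma$-symmetric constraint systems and to preserve the dimension of the space of trivial $\Gamma$-symmetric motions. I would therefore invoke that discussion to ensure the rotation $\gamma$ and the projection respect the symmetry-adapted decomposition. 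Once this compatibility is in hand, the remainder is routine: the rotation about the perpendicular axis followed by orbit-wise inversion yields a $\Gamma$-symmetric bar-joint framework in $\mathbb{R}^d$ in which the points of $X$ lie on a hyperplane perpendicular to the mirror, and forced $\Gamma$-symmetric infinitesimal rigidity is preserved at every step, giving the equivalence of (b) and (c).

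I would close by remarking, as the paper does after Corollary~\ref{cor:mirrortransfer}, that this argument is special to $\mathcal{C}_s$: for any group containing a genuine rotation, the auxiliary rotation $\gamma$ about the perpendicular axis fails to commute with the group action and hence destroys the symmetry, so the method does not extend to yield a forced-symmetric analogue of part (c) for other groups.
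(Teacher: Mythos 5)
Your proposal is correct and takes essentially the same route as the paper, which proves Corollary~\ref{cor:mirrortransferforced} simply by declaring its proof ``similar to Corollary~\ref{cor:mirrortransfer}'': (a)$\Leftrightarrow$(b) from the forced-symmetric transfer theorem of Section~\ref{sec:forced}, and (b)$\Leftrightarrow$(c) by the rotation $\gamma$ about the axis perpendicular to the mirror hyperplane followed by orbit-wise inversion, with Lemma~\ref{lem:invforced} and the $\phi$, $\chi_x$ computation guaranteeing that forced $\Gamma$-symmetric rigidity is preserved. Your explicit checks---that $\gamma$ commutes with $\tilde\tau(\mathbb{Z}_2)$, that the no-point-on-the-perpendicular-axis hypothesis lets $\gamma$ clear the equator, and that the motion-space isomorphism restricts to the $\Gamma$-symmetric subspaces---are exactly the details the paper leaves implicit.
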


\section{Group pairings on $\mathbb{S}^2$ and in $\mathbb{R}^2$}
\label{sec:pairs}

We now consider relationships between symmetry groups with respect to infinitesimal rigidity and forced $\Gamma$-symmetric infinitesimal rigidity in both $\mathbb{S}^2$ and $\mathbb{R}^2$. (Analogous results for higher dimensions will be considered in Section~\ref{sec:higher}.) Throughout this section we will again assume that $G=(V,E)$ is a $\Gamma$-symmetric graph with respect to $\theta$, where $\theta$ acts freely on $V$. (Non-free actions are discussed in Section~\ref{sec:non-free}.)

 For simplicity we first deal with the basic pairing of mirror symmetry and half-turn symmetry. In later subsections we will generalise to other groups.

\subsection{Half-turn and mirror symmetry}
We prove that (forced $\mathbb{Z}_2$-symmetric) infinitesimal rigidity under half-turn symmetry is equivalent to (forced $\mathbb{Z}_2$-symmetric) infinitesimal rigidity under mirror symmetry on $\mathbb{S}^2$ (see also Figure~\ref{fig:transfersphere}.)

\begin{thm}\label{thm:mirrorhtpair} 
Let $G=(V,E)$ be a graph and let $\theta:\mathbb{Z}_2\to \textrm{Aut}(G)$ act freely on $V$. Further, let $X$ be a (possibly empty) subset of $V$. Then the following are equivalent:
\begin{itemize}
\item[(a)] $G$ can be realised as a $\mathbb{Z}_2$-symmetric (resp. forced $\mathbb{Z}_2$-symmetric) infinitesimally rigid bar-joint framework on $\mathbb{S}^2$ with respect to $\theta$ and $\tau:\mathbb{Z}_2\to \mathcal{C}_s$, where points assigned to $X$ lie on a great circle.
\item[(b)] $G$ can be realised as a $\mathbb{Z}_2$-symmetric (resp. forced $\mathbb{Z}_2$-symmetric) bar-joint framework on $\mathbb{S}^2$ with respect to $\theta$ and $\tau':\mathbb{Z}_2\to \mathcal{C}_2$, where points assigned to $X$ lie on a great circle.
\end{itemize}
\end{thm}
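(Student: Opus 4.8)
The plan is to realise the passage between mirror and half-turn symmetry as a vertex-wise inversion of the kind appearing in Lemma~\ref{lem:inv}, driven by a single algebraic identity in $O(\mathbb{R}^3)$. Write $\tau(\gamma)=s$ for the generating reflection, where $s$ is the reflection of $\mathbb{R}^3$ in the plane through the origin with unit normal $n$, and write $\tau'(\gamma)=R$ for the half-turn about the axis $\mathbb{R}n$. From $s(x)=x-2\langle x,n\rangle n$ and $R(x)=2\langle x,n\rangle n-x$ one reads off
\begin{equation*}
R=-s=\iota_0\circ s,
\end{equation*}
where $\iota_0=-\mathrm{id}$ is the central inversion (antipodal map) of $\mathbb{R}^3$; one also gets $R\circ s=-\mathrm{id}$. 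This is the one fact that powers the whole argument: the half-turn is the reflection followed by the antipodal map.

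Next I would fix a set $V_0\subseteq V$ of orbit representatives — since $\theta$ is free and $\Gamma=\mathbb{Z}_2$, every orbit is a pair $\{i,\gamma i\}$ — and set $I=V\setminus V_0$. Let $\iota$ be the inversion operator of Lemma~\ref{lem:inv} associated with this $I$, and put $p'=\iota\circ p$, so $p'_i=p_i$ for $i\in V_0$ and $p'_{\gamma i}=-p_{\gamma i}$. Using $R=-s$ I would check that $(G,p)$ is $\mathcal{C}_s$-symmetric (with respect to $\theta$ and $\tau$) if and only if $(G,p')$ is $\mathcal{C}_2$-symmetric (with respect to $\theta$ and $\tau'$): for a representative $i$ one has $R(p'_i)=R(p_i)=-s(p_i)=-p_{\gamma i}=p'_{\gamma i}$, and the partner relation $R(p'_{\gamma i})=p'_i$ follows from $R\circ s=-\mathrm{id}$. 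Since the antipodal map sends every great circle through the origin to itself, the hypothesis that the points assigned to $X$ lie on a great circle is preserved by $\iota$; thus $\iota$ is a bijection between the realisations described in (a) and those in (b).

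For the incidental statement the equivalence is now immediate from Lemma~\ref{lem:inv}, which asserts that $(G,p)$ and $(G,p')=(G,\iota\circ p)$ have the same infinitesimal rigidity on $\mathbb{S}^2$. For the forced statement the main point — and the step I expect to require the most care — is that the isomorphism of infinitesimal-motion spaces underlying Lemma~\ref{lem:inv} (the map negating $\dot p_i$ exactly for $i\in I$) must be verified to intertwine the two symmetry types. Concretely, if $\dot p$ is $\mathcal{C}_s$-symmetric then the motion $\dot p'$ with $\dot p'_i=\dot p_i$ on $V_0$ and $\dot p'_{\gamma i}=-\dot p_{\gamma i}$ satisfies $R(\dot p'_i)=-s(\dot p_i)=-\dot p_{\gamma i}=\dot p'_{\gamma i}$, so it is $\mathcal{C}_2$-symmetric, and conversely; moreover this linear bijection carries trivial motions to trivial motions, which one sees by applying the inversion isomorphism to a spanning complete-graph framework whose motion space is exactly the trivial motions, exactly as in the proof of Lemma~\ref{lem:inv}.

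I would emphasise that Lemma~\ref{lem:invforced} cannot be quoted verbatim here, since it inverts whole vertex orbits and preserves the representation, whereas our $I$ meets each orbit in a single vertex and deliberately converts $\mathcal{C}_s$ into $\mathcal{C}_2$; hence the equivariance refinement above is genuinely what is needed. Combining the two directions shows that the dimensions of the spaces of symmetric motions and of trivial symmetric motions agree on the two sides, so $(G,p)$ is forced $\mathbb{Z}_2$-symmetric infinitesimally rigid if and only if $(G,p')$ is, completing the equivalence of (a) and (b).
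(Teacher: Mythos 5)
Your proof is correct, and the incidental half is essentially the paper's own argument: the paper likewise inverts exactly one vertex per orbit (taking the mirror to be the plane $x=0$ and checking in coordinates that the inverted partner $(x_i,-y_i,-z_i)$ is the half-turn image of $(x_i,y_i,z_i)$ about the $x$-axis), notes that points on a great circle stay on that great circle, and quotes Lemma~\ref{lem:inv}; your coordinate-free identity $R=-s=\iota_0\circ s$ is just a cleaner packaging of the same partial inversion. Where you genuinely diverge is the forced case. The paper does not refine the motion-space isomorphism; instead it writes down the spherical orbit matrix $O_{\theta,\tau}(G,p)$ and converts it into $O_{\theta,\tau'}(G,q)$ by explicit row operations (exploiting $\tau(-1)p_i=-\tau'(-1)p_i$), concluding that the two orbit matrices have equal rank, and then applies the same argument to a spanning complete graph to match the dimensions of the trivial symmetric motion spaces. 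Your route --- checking that the sign-flip isomorphism underlying Lemma~\ref{lem:inv} intertwines $\tau$-symmetric and $\tau'$-symmetric motions and preserves triviality --- is the kernel-side dual of the paper's row-space computation; it is arguably more conceptual, avoids the orbit matrix altogether, and your remark that Lemma~\ref{lem:invforced} cannot be invoked verbatim (it inverts whole orbits and keeps the representation fixed) is exactly right and is precisely why the paper reproves the forced case by hand. What the paper's matrix formulation buys is reusability: the identical row-operation argument is quoted wholesale for all the other pairings in Theorem~\ref{thm:pairs}, whereas your equivariance check would need to be rerun per pairing (it does generalise, since in each pairing the representations $\tau$ and $\tau'$ agree on an index-$2$ subgroup and differ by a global sign on the other coset). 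One cosmetic simplification available to you: triviality is preserved immediately, since $\dot p_i=Sp_i$ gives $\dot p'_i=\epsilon_i Sp_i=S(\epsilon_i p_i)=Sp'_i$ for the skew-symmetric $S$, so the complete-graph detour is not actually needed in your version.
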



\begin{proof} We first prove the equivalence of (a) and (b) for infinitesimal rigidity.
Let $\mathbb{Z}_2=\{1,-1\}$. Suppose that $V_0=\{v_1,v_2,\dots,v_n\}$ is a set of representatives for the vertex orbits of $G$ under the action of $\theta$, and that $G$ has vertex set $\{v_1,v_1',v_2,v_2',\dots,v_n,v_n'\}$, with $\theta(-1)v_i=v_i'$ for all $i=1,\ldots, n$. Without loss of generality we consider $\tau(-1)$ to be the reflection in the plane $x=0$. Hence for a framework $(G,p)$ that is $\mathbb{Z}_2$-symmetric with respect to $\theta$ and $\tau$ we have $p(v_i)=(x_i,y_i,z_i)$ and $p(v_i')=(x_i',y_i',z_i')=(-x_i,y_i,z_i)$. Applying inversion to the set $I=V-V_0$ gives us $(x_i,y_i,z_i)$ for each $v_i\in V_0$ and $(x_i,-y_i,-z_i)$ for each $v_i'\in V-V_0$. Note that $(x_i,-y_i,-z_i)$ is the half-turn rotation of $(x_i,y_i,z_i)$ about the $x$-axis, so we let $\tau'(-1)$ be the half-turn rotation about the $x$-axis.  This partial inversion process is clearly  reversible, and since  inversion of points on $\mathbb{S}^2$ preserves infinitesimal rigidity by Lemma~\ref{lem:inv}, and since points on a great circle remain on the same great circle under inversion, the proof is complete.

Next we prove the equivalence of (a) and (b) for  forced $\mathbb{Z}_2$-symmetric infinitesimal rigidity.
Let  $(G,p)$ and $(G,q)$ be the two corresponding frameworks with $\mathcal{C}_s$ and $\mathcal{C}_2$ symmetry. The matrix  $O_{\theta,\tau}(G,p)$ corresponding to the linear system (\ref{eq:inner_inf}) and (\ref{eq:scale})  for  $(G,p)$ has the following form (this matrix is also known as the \emph{spherical orbit matrix} of $(G,p)$ \cite{BSWWSphere}):
\begin{displaymath} \bordermatrix{& & & & v_i & & & & v_j & & &
\cr & & & &  & & \vdots & &  & & &
\cr
 (v_i,v_j) & 0 & \ldots &  0 & (p_i-\tau(\psi_{ij})p_j) & 0 & \ldots & 0 &  (p_j-\tau(\psi_{ij})  p_i) &  0 &  \ldots&  0
 \cr & & & &  & & \vdots & &  & & &
\cr (v_i,v_i) & 0 & \ldots &  0 & 2(p_i-\tau(\psi_{ij})p_i) & 0 & \ldots & 0 &  0 &  0 &  \ldots&  0
 \cr & & & &  & & \vdots & &  & & &
\cr v_i & 0 & \ldots &  0 &p_{i}& 0 & \ldots & 0 & 0&  0 &  \ldots&  0
\cr & & & &  & & \vdots & &  & & &
\cr  v_j & 0 & \ldots &  0 & 0& 0 & \ldots & 0 & p_{j} &  0 &  \ldots&  0
\cr & & & &  & & \vdots & &  & & &
}
\textrm{,}\end{displaymath}
where $p_i=p(v_i)$ and $\psi_{ij}=\psi((v_i,v_j))$. (Note that $\tau(\psi_{ij})= \tau(\psi_{ij})^{-1}$ since $\tau(\psi_{ij})$ is an involution.)

We show that we can obtain the spherical orbit matrix $O_{\theta,\tau'}(G,q)$ for $(G,q)$ from $O_{\theta,\tau}(G,p)$ by carrying out elementary row operations.

For any edge $(v_i,v_j)$ with gain $\psi_{ij}=-1$, subtract the row corresponding to $v_i$ and the row corresponding to $v_j$ from the row corresponding to $(v_i,v_j)$. Subsequently, multiply the new row corresponding to $(v_i,v_j)$ by $-1$ and add back the rows corresponding to $v_i$ and $v_j$. Similarly, for any loop edge $(v_i,v_i)$ (which necessarily has the gain label $\psi_{ij}=-1$), we divide the row corresponding to $(v_i,v_i)$ by 2, then subtract the row corresponding to $v_i$, and then multiply the resulting row by -1. Finally we add back the row corresponding to $v_i$ and multiply the row by $2$. Any edge $(v_i,v_j)$ with gain $\psi_{ij}=1$ is left alone. The resulting matrix has the form
\begin{displaymath} \bordermatrix{& & & & v_i & & & & v_j & & &
\cr & & & &  & & \vdots & &  & & &
\cr
 (v_i,v_j) & 0 & \ldots &  0 & (p_i+\tau(\psi_{ij})p_j) & 0 & \ldots & 0 &  (p_j+\tau(\psi_{ij})  p_i) &  0 &  \ldots&  0
 \cr & & & &  & & \vdots & &  & & &
\cr (v_i,v_i) & 0 & \ldots &  0 & 2(p_i+\tau(\psi_{ij})p_i) & 0 & \ldots & 0 &  0 &  0 &  \ldots&  0
 \cr & & & &  & & \vdots & &  & & &
\cr v_i & 0 & \ldots &  0 &p_{i}& 0 & \ldots & 0 & 0&  0 &  \ldots&  0
\cr & & & &  & & \vdots & &  & & &
\cr  v_j & 0 & \ldots &  0 & 0& 0 & \ldots & 0 & p_{j} &  0 &  \ldots&  0
\cr & & & &  & & \vdots & &  & & &
}
\textrm{.}\end{displaymath}
By the definition of $\tau$ and $\tau'$, for each $i$ we have $\tau(-1)p_i= -\tau'(-1)p_i$. Thus, the matrix above is indeed equal to $O_{\theta,\tau'}(G,q)$, and  $O_{\theta,\tau}(G,p)$ and $O_{\theta,\tau'}(G,q)$ cleary have the same rank. By applying the above argument to a framework on the complete graph that affinely spans $\mathbb{R}^{3}$, we see that the spaces of trivial $\mathbb{Z}_2$-symmetric infinitesimal motions with respect to $\tau$ and $\tau'$ have the same dimension. This gives the result.
\end{proof}

\begin{figure}[htp]
        \begin{center} \includegraphics[scale=0.32]{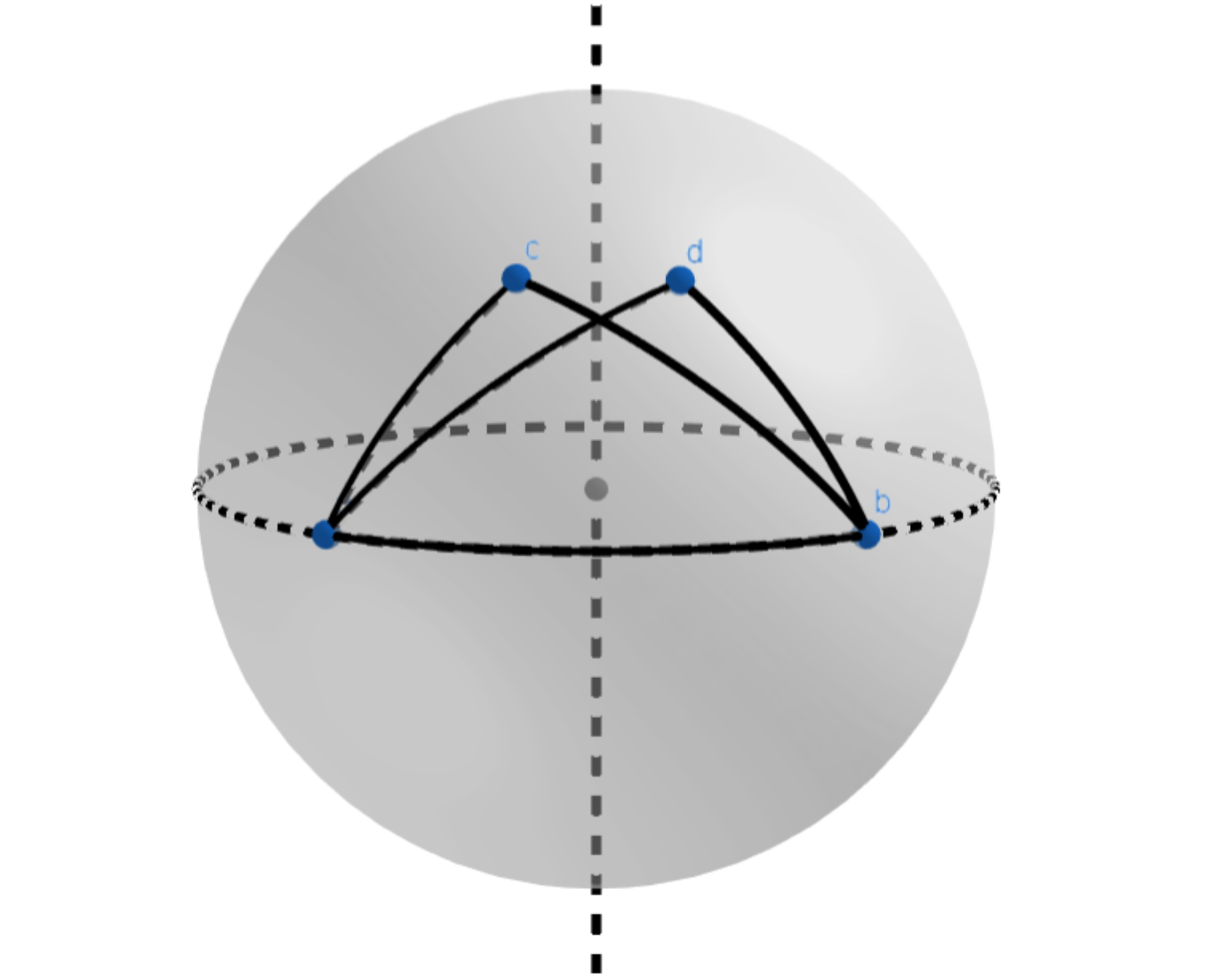} \hspace{0.6cm} 
         \includegraphics[scale=0.33]{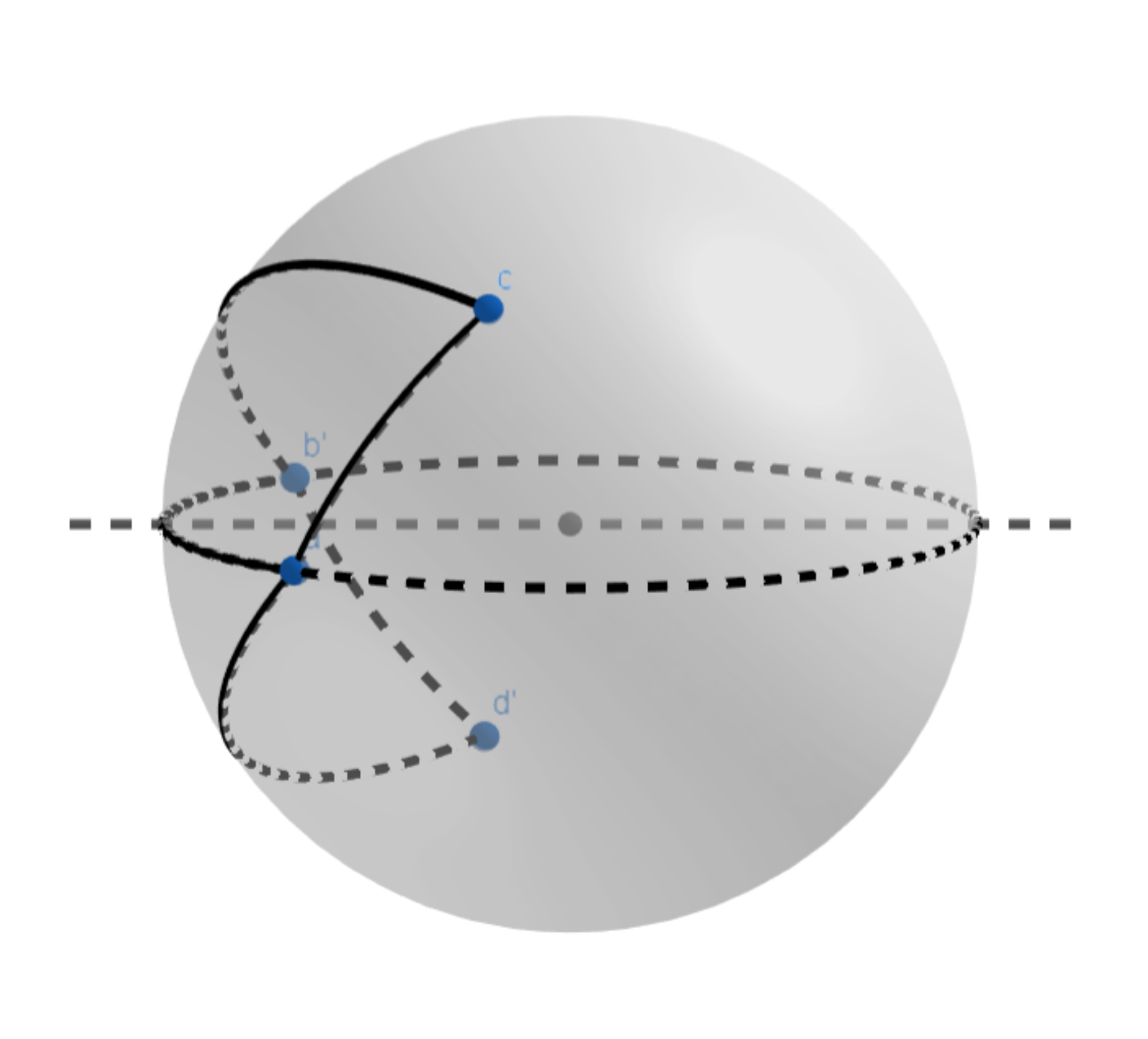}
        \end{center}   
\caption{Infinitesimally rigid frameworks on $\mathbb{S}^2$ with reflection and half-turn symmetry, illustrating the proof of Theorem~\ref{thm:mirrorhtpair}.}
\label{fig:transfersphere}
\end{figure}

From Theorem~\ref{thm:mirrorhtpair} we obtain the following corollary (see also Figure~\ref{fig:transferr2}).


\begin{cor}\label{cor:mirhalf} Let $G=(V,E)$ be a graph and let $\theta:\mathbb{Z}_2\to \textrm{Aut}(G)$ act freely on $V$.
Then the following are equivalent. 
 \begin{itemize}
\item[(a)] $G$ can be realised as a $\mathbb{Z}_2$-symmetric (resp. forced $\mathbb{Z}_2$-symmetric) infinitesimally rigid bar-joint framework in $\mathbb{R}^2$ with respect to $\theta$ and $\tau:\mathbb{Z}_2\to \mathcal{C}_s$.
\item[(b)]  $G$ can be realised as a $\mathbb{Z}_2$-symmetric (resp. forced $\mathbb{Z}_2$-symmetric) infinitesimally rigid bar-joint framework in $\mathbb{R}^2$ with respect to $\theta$ and $\tau':\mathbb{Z}_2\to \mathcal{C}_2$.
\end{itemize}
Moreover, for any nonempty subset $X$ of $V$, the following are equivalent.
 \begin{itemize}
\item[(c)] $G$ can be realised as a $\mathbb{Z}_2$-symmetric (resp. forced $\mathbb{Z}_2$-symmetric) infinitesimally rigid point-line framework in $\mathbb{R}^2$ with respect to $\theta$ and $\tau:\mathbb{Z}_2\to \mathcal{C}_s$, such that each vertex in $X$ is realised as a line and 
  each vertex in $V\setminus X$ is realised as a point. 
\item[(d)]  $G$ can be realised as a $\mathbb{Z}_2$-symmetric (resp. forced $\mathbb{Z}_2$-symmetric) infinitesimally rigid bar-joint framework in $\mathbb{R}^2$ with respect to $\theta$ and $\tau':\mathbb{Z}_2\to \mathcal{C}_2$, such that the points assigned to $X$ are collinear.
\end{itemize}
Finally, for any nonempty subset $X$ of $V$, the following are equivalent.
 \begin{itemize}
\item[(e)] $G$ can be realised as a $\mathbb{Z}_2$-symmetric (resp. forced $\mathbb{Z}_2$-symmetric) infinitesimally rigid bar-joint framework in $\mathbb{R}^2$ with respect to $\theta$ and $\tau:\mathbb{Z}_2\to \mathcal{C}_s$, such that each point assigned to $X$ lies on the mirror line (and is hence coincident with another point assigned to $X$).
\item[(f)]  $G$ can be realised as a $\mathbb{Z}_2$-symmetric (resp. forced $\mathbb{Z}_2$-symmetric) infinitesimally rigid point-line framework in $\mathbb{R}^2$ with respect to $\theta$ and $\tau':\mathbb{Z}_2\to \mathcal{C}_2$, such that each vertex in $X$ is realised as a line (and is hence parallel to another line assigned to $X$) and each vertex in $V\setminus X$ is realised as a point.
\end{itemize}
\end{cor}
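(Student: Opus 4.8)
The plan is to derive all three equivalences from the single sphere-level pairing in Theorem~\ref{thm:mirrorhtpair}, by transferring each of the Euclidean configurations in (a)--(f) to a spherical one via the results of Section~\ref{sec:transfer}, and then realigning the two spheres by an element of $O(3)$. Throughout I use the coning conventions fixed in Section~\ref{sec:transfer}: under the augmented representation $\tilde\tau$, a planar reflection $\mathcal{C}_s$ lifts to a reflection of $\mathbb{S}^2$ whose mirror great circle passes through the two poles, while a planar half-turn $\mathcal{C}_2$ lifts to the half-turn of $\mathbb{S}^2$ about the pole axis $\be$. Since every element of $O(3)$ is a congruence of $\mathbb{S}^2$, conjugating the representation by $R\in O(3)$ and replacing $q$ by $Rq$ preserves both incidental and forced $\mathbb{Z}_2$-symmetric infinitesimal rigidity; this is the tool I will use to move half-turn axes and great circles into the positions required by coning.

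The equivalence (a)$\Leftrightarrow$(b) is the case $X=\emptyset$ and is the cleanest instance. Coning via Corollary~\ref{cor:combincidental}(b) (and its forced analogue Corollary~\ref{cor:combforced}(b)) turns a planar $\mathcal{C}_s$-framework into a spherical $\mathcal{C}_s$-framework with mirror $\{x=0\}$; Theorem~\ref{thm:mirrorhtpair} with $X=\emptyset$ replaces it by a spherical $\mathcal{C}_2$-framework whose half-turn axis is the $x$-axis; the rotation about the $y$-axis taking the $x$-axis to the pole axis $\be$ turns this into the coning image of a planar $\mathcal{C}_2$-framework; and coning back yields (b). With $X=\emptyset$ no great circle need be tracked, so only the axis has to be repositioned.

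For (c)$\Leftrightarrow$(d) and (e)$\Leftrightarrow$(f) the same chain is run, but now one must also track the great circle $\mathcal{G}$ carrying $p(X)$, and the key point is that a single rotation $R$ repositions the half-turn axis and $\mathcal{G}$ compatibly. In the proof of Theorem~\ref{thm:mirrorhtpair} the mirror is the plane $\{x=0\}$ and the paired half-turn is about the \emph{normal} $x$-axis, while $p(X)$ stays on its great circle $\mathcal{G}$; coning the $\mathcal{C}_2$ side, however, requires the half-turn about the pole. For (c)$\Leftrightarrow$(d) I take $\mathcal{G}$ to be the equator: by Theorem~\ref{thm:symtrans} a point-line $\mathcal{C}_s$-framework with $X$ the lines corresponds to a spherical $\mathcal{C}_s$-framework with $X$ on the equator, Theorem~\ref{thm:mirrorhtpair} produces a half-turn about the $x$-axis with $X$ still on the equator, and the rotation $R$ about the $y$-axis sending the $x$-axis to $\be$ simultaneously carries the equator to the polar great circle $\{x=0\}$, which central-projects to a line through the origin, i.e.\ the collinear set of (d). For (e)$\Leftrightarrow$(f) I instead take $\mathcal{G}$ to be the mirror circle $\{x=0\}$ itself: points of $X$ then lie on the mirror (coincident in pairs), Theorem~\ref{thm:mirrorhtpair} separates each such pair into antipodes on $\{x=0\}$ related by the $x$-axis half-turn, and now the same $R$ sends the $x$-axis to $\be$ and $\{x=0\}$ to the equator, so that Theorem~\ref{thm:symtrans} reads off a point-line $\mathcal{C}_2$-framework whose lines occur in parallel pairs, as in (f).

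The main obstacle is the last transfer on the (d) side: passing from a spherical $\mathcal{C}_2$-framework with $X$ on the polar great circle $\{x=0\}$ to a planar bar-joint $\mathcal{C}_2$-framework with $X$ collinear is not literally one of the statements of Section~\ref{sec:transfer} (which deliver point-line frameworks precisely when $X$ lies on the equator), so it must be argued directly. Since the pole half-turn fixes each hemisphere, Lemma~\ref{lem:inv} (resp.\ Lemma~\ref{lem:invforced}) lets me invert whole vertex orbits into the strict upper hemisphere while keeping $p(X)$ on $\{x=0\}$ and preserving the symmetry, after which central projection to $\{z=1\}$ carries the pole half-turn to a planar half-turn and the polar circle to the line $\{x=0\}$; the symmetry-preservation is the same computation as in the proof of Theorem~\ref{thm:symtrans}. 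Minor points to check along the way are that no vertex projects from a pole (arranged by genericity of the position of $\mathcal{G}$) and that the coincidences appearing in (e) are accommodated by Lemma~\ref{lem:inv} at the level of the spherical rigidity matrix. The forced-symmetric statements then follow by the identical chain, replacing each cited transfer and inversion result by its forced counterpart from Sections~\ref{sec:transfer}--\ref{sec:forced}.
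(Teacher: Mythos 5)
Your proposal is correct and follows essentially the same route as the paper's own proof: pair the two spherical frameworks via Theorem~\ref{thm:mirrorhtpair}, realign the half-turn axis with the pole axis by a rotation, invert vertex orbits into a closed hemisphere using Lemma~\ref{lem:inv} (resp.\ Lemma~\ref{lem:invforced}), and centrally project, tracking whether the great circle carrying $p(X)$ is the equator or the mirror circle to obtain (c)$\Leftrightarrow$(d) and (e)$\Leftrightarrow$(f). The only differences are cosmetic: you rotate before inverting (the paper inverts into the left hemisphere and then rotates), and your genericity remark plays the role of the paper's slight perturbation of the vertices and its rotation about the half-turn axis to avoid vertices on the relevant equator.
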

\begin{proof} We first make some general remarks that are relevant to proving all three equivalences.

By Theorem~\ref{thm:mirrorhtpair}, there exists a $\mathbb{Z}_2$-symmetric (resp. forced $\mathbb{Z}_2$-symmetric) infinitesimally rigid bar-joint framework $(G,p)$ on $\mathbb{S}^2$ with respect to $\theta$ and $\tau:\mathbb{Z}_2\to \mathcal{C}_s$, where points assigned to $X$ lie on the equator if and only if  there exists a $\mathbb{Z}_2$-symmetric (resp. forced $\mathbb{Z}_2$-symmetric) infinitesimally rigid   $\mathbb{Z}_2$-symmetric bar-joint framework $(G,q)$ on $\mathbb{S}^2$ with respect to $\theta$ and $\tau':\mathbb{Z}_2\to \mathcal{C}_2$, where points assigned to $X$ lie on the equator. Let $(G,q)$ be obtained from $(G,p)$ as described in the proof of Theorem~\ref{thm:mirrorhtpair}. In particular, suppose (as in the proof of Theorem~\ref{thm:mirrorhtpair}) that  $\tau(-1)$ is the reflection in the $x=0$ plane and $\tau'(-1)$ is the half-turn around the $x$-axis. We now use the transfer mappings described in the proof of Theorem~\ref{thm:symtrans} to project these spherical frameworks to bar-joint or point-line frameworks in $\mathbb{R}^2$.

If necessary, we may invert orbits of points  of  $(G,p)$ (under the $\mathbb{Z}_2$-action) so that all points of the resulting framework $(G,i\circ p)$ lie on the (closed) upper hemisphere (preserving the mirror symmetry). We may then centrally project $(G,\iota \circ p)$ to the affine plane $z=1$ (which may then be identified with $\mathbb{R}^2$) to either obtain a bar-joint framework $(G,p')$ in $\mathbb{R}^2$ with $\mathcal{C}_s$ symmetry if no point of $(G,\iota \circ p)$ lies on the equator, or to obtain a point-line framework $(G,p',\ell')$  in $\mathbb{R}^2$ with $\mathcal{C}_s$ symmetry such that each vertex in $X$ is realised as a line and each vertex in $V\setminus X$ is realised as a point. 

Similarly, if necessary, we can invert orbis of points of $(G,q)$ (under the $\mathbb{Z}_2$-action) so that all points of the resulting framework $(G,i\circ q)$ lie on the (closed) left hemisphere (preserving the half-turn symmetry).  We may then rotate the entire framework $(G,i\circ q)$ about the $y$ axis by $\pi/2$ so that all points of the resulting framework $(G,\gamma \circ i\circ q)$ lie on the (closed) upper hemisphere. We may then centrally project $(G,\gamma \circ \iota \circ q)$ to the affine plane $z=1$ (which may then be identified with $\mathbb{R}^2$) to either obtain a bar-joint framework  $(G,q')$ in $\mathbb{R}^2$ with $\mathcal{C}_2$ symmetry if no point of $(G,\gamma \circ \iota \circ q)$ lies on the equator, or to obtain a point-line framework $(G,q',\ell')$ in $\mathbb{R}^2$  with $\mathcal{C}_2$ symmetry  such that each vertex in $X$ is realised as a line and each vertex in $V\setminus X$ is realised as a point. 

All the operations described above preserve infinitesimal rigidity and forced $\mathbb{Z}_2$-symmetric infinitesimal rigidity,  as shown in Sections~\ref{sec:transfer} and \ref{sec:forced}.

Note that $(G,p)$ has no point on the equator (or respectively the $x=0$ plane) if and only if $(G,q)$ has no point on the equator (resp. the $x=0$ plane). This proves the equivalence of (a) and (b).

For the equivalence of (c) and (d) we may assume (by slightly perturbing the vertices if necessary) that $(G,p)$ has no point on 
 the mirror-plane. If $(G,p)$ has a non-empty set of vertices positioned on the equator, then the same is true for $(G,q)$. Also, $(G,p)$ has no point on the $y$-axis if and only if $(G,q)$ has no point on the $y$-axis. Moreover, $(G,p)$ has no point on the mirror plane if and only if $(G,\gamma \circ \iota \circ q)$ has no point on the equator. So in this case, the operations described above for $(G,p)$ and $(G,q)$ yield the point-line framework $(G,p',\ell')$  in $\mathbb{R}^2$ with $\mathcal{C}_s$ symmetry and the bar-joint framework $(G,q')$ with $\mathcal{C}_2$ symmetry, respectively. This proves the equivalence of (c) and (d). 

Finally, note that $(G,p)$ has a coincident pair of points on the mirror plane if and only if $(G,\gamma \circ \iota \circ q)$ has a pair of opposite points on the equator. By rotating the entire framework $(G,\gamma \circ \iota \circ q)$ around its half-turn axis (i.e. the $z$-axis), we may always assume without loss of generality that $(G,\gamma \circ \iota \circ q)$ has no vertex on the $y$-axis, and hence neither does $(G,p)$. This proves the equivalence of (e) and (f).
\end{proof}



\begin{figure}[htp]
        \begin{center} \includegraphics[scale=0.5]{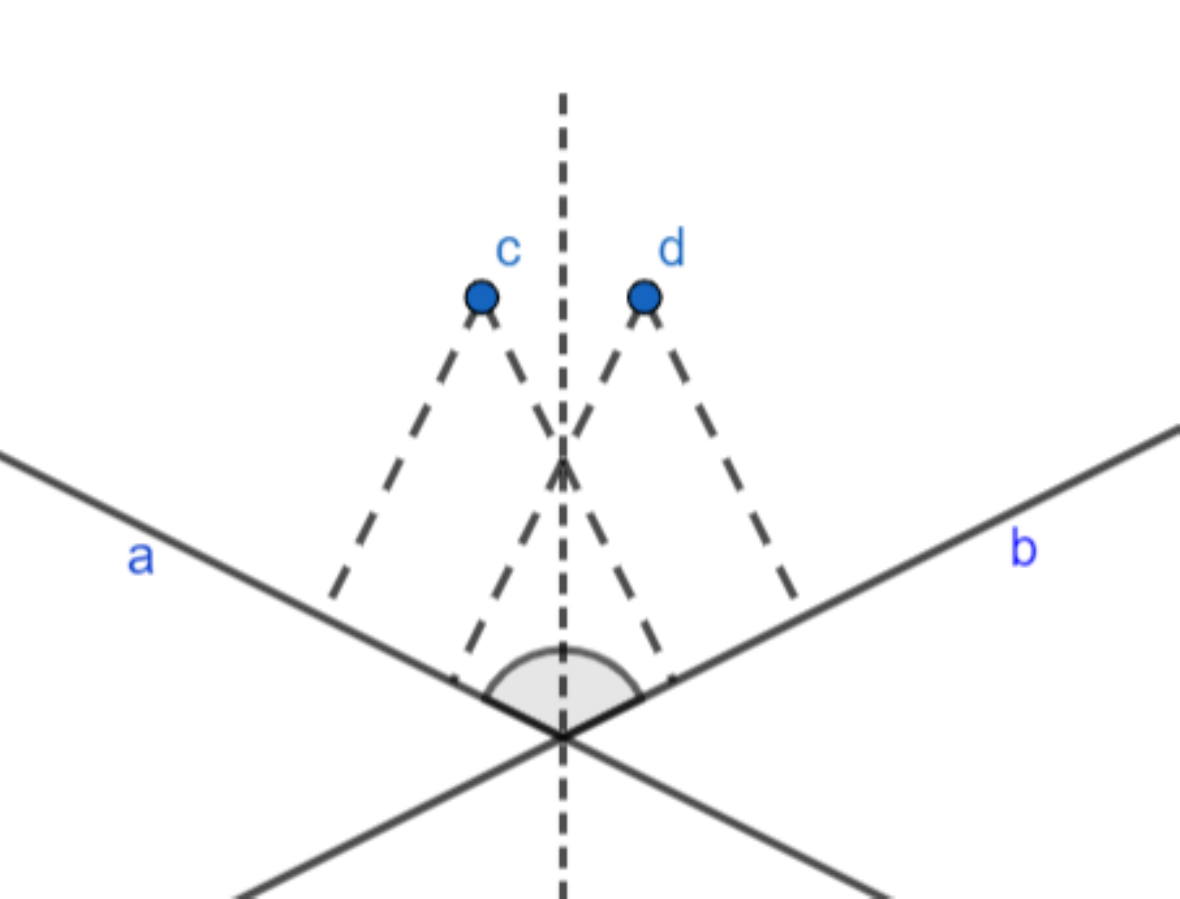} \hspace{0.5cm} 
         \includegraphics[scale=0.48]{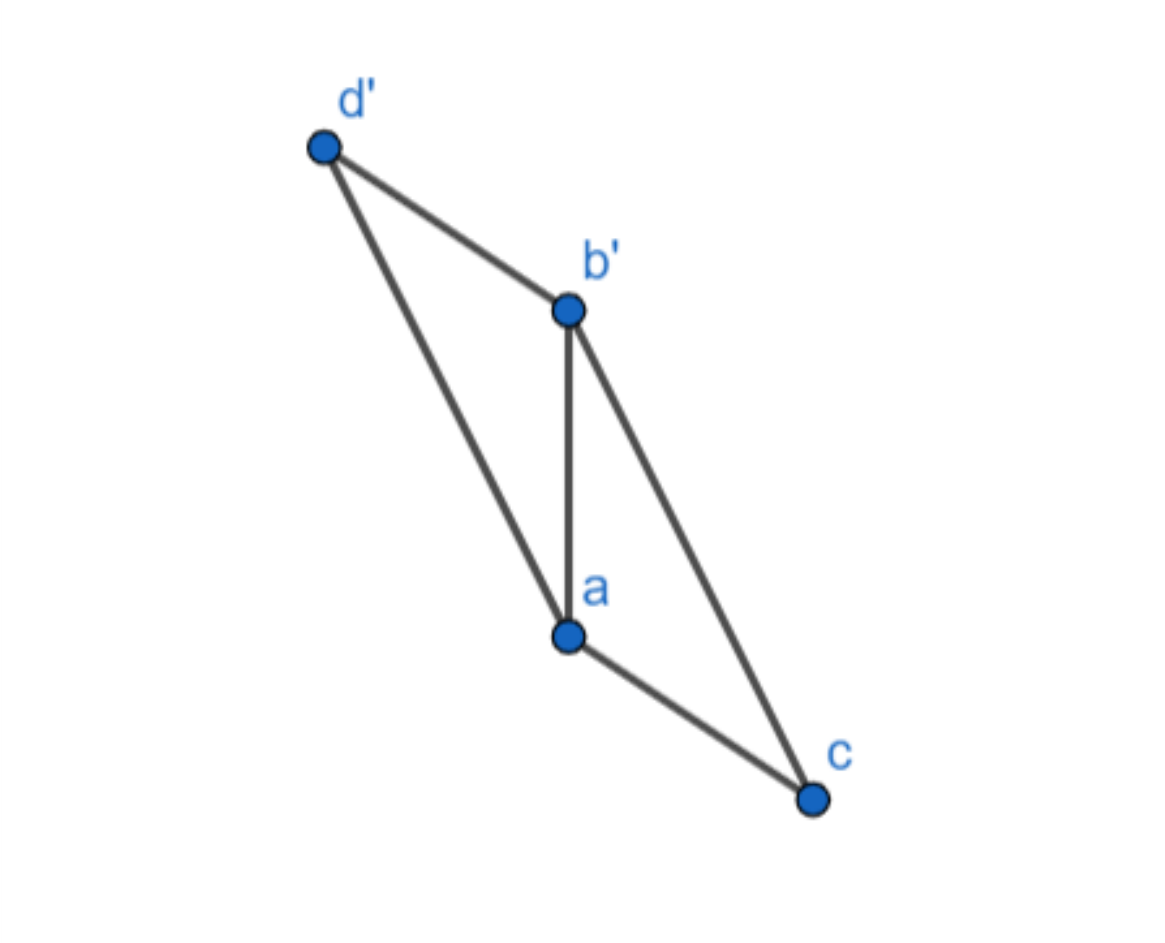}
        \end{center}   
\caption{The pair of infinitesimally rigid (point-line and bar-joint) frameworks in $\mathbb{R}^2$  corresponding to the pair of spherical frameworks shown in Figure~\ref{fig:transfersphere}, illustrating the proof of Corollary~\ref{cor:mirhalf}(c),(d).}
\label{fig:transferr2}
\end{figure}

\begin{rem} It follows from the equivalence of (a) and (b) in Corollary~\ref{cor:mirhalf} that a $\mathbb{Z}_2$-regular realisation of a graph $G$ as a bar-joint framework in $\mathbb{R}^2$ with respect to $\theta:\mathbb{Z}_2\to \textrm{Aut}(G)$ (which acts freely on $V$) and $\tau:\mathbb{Z}_2\to \mathcal{C}_s$ is infinitesimally rigid (resp. forced $\mathbb{Z}_2$-symmetric infinitesimally rigid) if and only if a $\mathbb{Z}_2$-regular realisation of $G$ as a bar-joint framework in $\mathbb{R}^2$ with respect to  $\theta$ and $\tau':\mathbb{Z}_2\to \mathcal{C}_2$ is infinitesimally rigid (resp. forced $\mathbb{Z}_2$-symmetric infinitesimally rigid), since we may use an argument similar to the one in Lemma~\ref{lem:regtran} to see that $\mathbb{Z}_2$-regularity is preserved under the described transfer.

 Therefore, mirror and half-turn symmetry have the same combinatorial characterisation for  $\mathbb{Z}_2$-regular infinitesimal rigidity (resp. forced $\mathbb{Z}_2$-symmetric infinitesimal rigidity) on $\mathbb{S}^2$, as well as in $\mathbb{R}^2$, by Corollaries~\ref{cor:combincidental} and \ref{cor:combforced}. While the characterisations for $\mathcal{C}_2$ and $\mathcal{C}_s$ are already known (recall Theorems~\ref{thm:forcedbjplane} and \ref{thm:combincidentalbj}), our result explains the combinatorics as more than a coincidence arising from the fact that corresponding spaces of trivial infinitesimal motions associated with the irreducible representations for $\mathcal{C}_2$ and $\mathcal{C}_s$ are the same. In particular, this equivalence of $\mathcal{C}_2$ and $\mathcal{C}_s$ does not rely on the `regularity' assumption.
\end{rem}

 Corollary~\ref{cor:mirhalf} may also be used to obtain the following combinatorial characterisation of (forced or incidentally) $\mathbb{Z}_2$-regular infinitesimally rigid point-line frameworks with exactly two lines and $\mathcal{C}_s$ symmetry (recall also Figure~\ref{fig:transfer_sph_ptline}).

\begin{thm}  \label{thm:lamanptline1} Let $\mathbb{Z}_2=\langle \gamma \rangle$ and let $(G,p,\ell)$ be a $\mathbb{Z}_2$-regular point-line framework in $\mathbb{R}^2$ with respect to $\theta:\mathbb{Z}_2 \to \textrm{Aut}_{PH}(G)$ and $\tau:\mathbb{Z}_2 \to \mathcal{C}_s$, where  $|V_H|=2$ and $\theta$ acts freely on $V=V_P\cup V_H$. Let $(G_0,\psi)$ be the quotient $\mathbb{Z}_2$-gain graph of $G$. Then $(G,p,\ell)$ is  infinitesimally rigid (resp. forced $\mathbb{Z}_2$-symmetric infinitesimally rigid) if and only if the quotient $\mathbb{Z}_2$-gain graph $(G_0,\psi)$ of $G$ contains a spanning $(2,3,i)$-gain-tight subgraph $(H_i,\psi_i)$ for each $i=1,2$ (resp. a spanning $(2,3,1)$-gain-tight subgraph $(H_1,\psi_1)$).
\end{thm}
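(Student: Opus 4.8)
The plan is to reduce to the bar-joint setting via Corollary~\ref{cor:mirhalf} and then extract the combinatorics from the known planar characterisations in Theorems~\ref{thm:combincidentalbj} and~\ref{thm:forcedbjplane}. First I would apply the equivalence of (c) and (d) in Corollary~\ref{cor:mirhalf}. Since $|V_H|=2$ and $\theta$ acts freely, the two line-vertices $h_1,h_2$ form a single orbit, and $(G,p,\ell)$ is infinitesimally rigid (resp. forced $\mathbb{Z}_2$-symmetric infinitesimally rigid) if and only if the corresponding $\mathcal{C}_2$-symmetric bar-joint framework $(G,q')$ in $\mathbb{R}^2$ is, where the two points assigned to $X$ form a symmetric pair lying on the line through the centre of the half-turn. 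Because the quotient gain graphs of $G$ coincide in both pictures, it then suffices to characterise the rigidity of $(G,q')$ in terms of $(G_0,\psi)$.

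The obstruction, flagged in Remark~\ref{rem1}, is that $(G,q')$ need not be $\mathbb{Z}_2$-regular, so one cannot simply quote Theorems~\ref{thm:combincidentalbj} and~\ref{thm:forcedbjplane}. I would therefore work with the symmetry-adapted (orbit) decomposition of the rigidity matrix into its trivial and sign blocks, exactly as in the proof of Theorem~\ref{thm:mirrorhtpair}. The only edge whose behaviour can differ from the generic case is the unique possible $HH$-edge $\{h_1,h_2\}$, which appears in $(G_0,\psi)$ as a loop with gain $-1$ at the line-orbit vertex; every $PP$- and $PH$-edge meets at least one generically placed $V_P$-orbit and so contributes its generic row in each block. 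This is precisely where the hypothesis $|V_H|=2$ is used: there is at most one such loop, and the two lines constitute a single orbit.

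Next I would compute the contribution of the gain-$(-1)$ loop in each block. Since $\tau'(-1)=-I$ in $\mathbb{R}^2$, for an anti-symmetric (sign-block) velocity the two ends of the corresponding edge of $G$ carry equal velocities, so the loop imposes no constraint and its row vanishes identically, whereas in the trivial block its row is a nonzero multiple of the position of the line-orbit vertex and is generically independent. Crucially this is position-independent, so the special location of the two line-vertices costs nothing beyond this predictable vanishing. Hence the rank of the trivial block is governed by the $(2,3,1)$-gain-count (loops permitted) and the rank of the sign block by the $(2,3,2)$-gain-count (gain-$(-1)$ loops forbidden, matching the vanishing row), which are exactly the matroids underlying Theorems~\ref{thm:combincidentalbj} and~\ref{thm:forcedbjplane}.

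Finally I would assemble the conclusion: $(G,q')$ is forced $\mathbb{Z}_2$-symmetric infinitesimally rigid if and only if its trivial block has full rank, i.e. $(G_0,\psi)$ contains a spanning $(2,3,1)$-gain-tight subgraph; and it is infinitesimally rigid if and only if both blocks have full rank, i.e. $(G_0,\psi)$ contains spanning $(2,3,i)$-gain-tight subgraphs for $i=1,2$. I expect the main obstacle to be the rank lower bound: showing that placing the two line-vertices on a line through the centre (together with the redundant $HH$-loop, if present) does not drop the rank of either block below the gain-count value. This amounts to a realizability argument inside the constrained configuration space of $\Gamma$-$X$-regular realisations, which cannot be imported wholesale from the $\mathbb{Z}_2$-regular theory precisely because of Remark~\ref{rem1}.
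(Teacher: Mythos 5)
Your opening move coincides with the paper's: both transfer the $\mathcal{C}_s$-symmetric point-line framework to a $\mathcal{C}_2$-symmetric bar-joint framework $(G,q')$ via Corollary~\ref{cor:mirhalf}(c),(d) and aim to conclude with Theorems~\ref{thm:combincidentalbj} and \ref{thm:forcedbjplane}; your necessity direction is fine on this basis. But in the sufficiency direction there is a genuine gap, which you yourself flag in your final paragraph and never close: the ``rank lower bound'', i.e.\ that the constrained placement of the two line-vertices does not drop the rank of either block of the rigidity matrix below its generic value. That lower bound is the entire content of the sufficiency direction, so deferring it leaves the proof incomplete. Your local computations are correct as far as they go --- the gain-$(-1)$ loop row vanishes identically in the anti-symmetric block and is a nonzero multiple of the position in the symmetric block, matching the fact that $(2,3,2)$-gain-sparsity forbids loops while $(2,3,1)$ permits them --- but these identities hold at \emph{every} $\mathcal{C}_2$-symmetric configuration and say nothing about whether the remaining rows retain generic rank at the special one. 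The assertion that ``every $PP$- and $PH$-edge meets a generically placed $V_P$-orbit and so contributes its generic row'' is not a valid rank argument, and the sentence ``crucially this is position-independent, so the special location \ldots costs nothing'' is a non sequitur: Remark~\ref{rem1} exhibits precisely such a degeneration, where a half-turn-swapped pair on the equator makes an edge identically redundant (parallel lines in the point-line picture).

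The paper closes this gap with a short geometric observation that makes your block-by-block re-derivation unnecessary, and it is exactly where the hypothesis $|V_H|=2$ enters --- not, as you suggest, merely through there being at most one $HH$-loop. Since the two line-vertices form a \emph{single orbit}, the corresponding pair of equator points of the spherical lift $(G,p)$ can be moved symmetrically off the equator without rank loss; equivalently, any $\mathcal{C}_s$-symmetric spherical configuration can be rotated, by an isometry commuting with the reflection (a rotation about the axis perpendicular to the mirror), so that this one orbit lands on the equator. Hence $\mathbb{Z}_2$-$V_H$-regularity of $(G,p)$ already implies $\mathbb{Z}_2$-regularity, so $(G,q')$ is $\mathbb{Z}_2$-regular and Theorems~\ref{thm:combincidentalbj} and \ref{thm:forcedbjplane} apply verbatim --- no symmetry-adapted rank analysis is needed. (With two or more line-orbits one cannot rotate all of them onto the equator simultaneously, which is why the theorem is restricted to $|V_H|=2$.) Note also that the degeneration of Remark~\ref{rem1} does not occur here: after the transfer of Theorem~\ref{thm:mirrorhtpair} the swapped pair lies on a great circle through the half-turn axis, not antipodally on the equator perpendicular to it --- in the plane, the two lines are mirror images rather than parallel. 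If you wish to retain your representation-theoretic route, you would still need to supply precisely this rotation/perturbation argument to certify the generic block ranks at the transferred configuration.
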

\begin{proof} Suppose $(G,p',\ell')$ is  infinitesimally rigid (resp. forced $\mathbb{Z}_2$-symmetric infinitesimally rigid) with $\mathcal{C}_s$ symmetry as stated in the theorem. We transfer $(G,p',\ell')$ to a bar-joint framework $(G,q')$ in $\mathbb{R}^2$ with $\mathcal{C}_2$ symmetry as in the proof of Corollary~\ref{cor:mirhalf} (c),(d). 
Then $(G,q')$ is also  infinitesimally rigid (resp. forced $\mathbb{Z}_2$-symmetric infinitesimally rigid) and it follows from Theorem~\ref{thm:combincidentalbj} (resp. Theorem~\ref{thm:forcedbjplane}) that $(G_0,\psi)$ must satisfy the stated gain-sparsity conditions. 

Conversely, suppose that $(G_0,\psi)$ satisfies the stated gain-sparsity conditions. We claim that if $(G,p',\ell')$ is $\mathbb{Z}_2$-regular, then so  is $(G,q')$. To see this, note first that if $(G,p',\ell')$ is $\mathbb{Z}_2$-regular, then the spherical framework $(G,p)$ with $\mathcal{C}_s$ symmetry obtained from the central projection of $(G,p',\ell')$ is $\mathbb{Z}_2$-$V_H$-regular. In fact, since $|V_H|=2$ we may deduce that $(G,p)$ is even a $\mathbb{Z}_2$-regular spherical framework. In other words, we may slightly perturb the two points of $(G,p)$ corresponding to the vertices in $V_H$ in an arbitrary direction while preserving the $\mathcal{C}_s$ symmetry (in particular we may move them symmetrically off the equator) without reducing the rank of the corresponding spherical rigidity matrices. But this implies that there exists an open neighbourhood of $\mathbb{Z}_2$-symmetric bar-joint realisations of $G$ in $\mathbb{R}^2$ (with respect to $\theta$ and $\tau':\mathbb{Z}_2\to \mathcal{C}_2$) around $(G,q')$ in which the rank of the corresponding bar-joint rigidity matrices is maintained. Therefore, we may deduce that $(G,q')$ is indeed $\mathbb{Z}_2$-regular, as claimed.


 Thus, it follows from Theorem~\ref{thm:combincidentalbj} (resp. Theorem~\ref{thm:forcedbjplane}) that $(G,q')$ is  infinitesimally rigid (resp. forced $\mathbb{Z}_2$-symmetric infinitesimally rigid).   Therefore, the same is true for $(G,p',\ell')$ and the proof is complete. 
\end{proof}

\begin{rem} Since $\mathbb{Z}_2$-regularity is preserved under the transfer described in  Corollary~\ref{cor:mirhalf} (a),(b), and since any non-trivial 
$\mathbb{Z}_2$-symmetric infinitesimal motion extends to a non-trvial continuous motion for $\mathbb{Z}_2$-regular frameworks (see \cite{BSfinite,SWorbit} for details), it follows that we may also use Corollary~\ref{cor:mirhalf} (a),(b) to transfer \emph{continuous} motions between frameworks with $\mathcal{C}_2$ and $\mathcal{C}_s$ symmetry. Using the proof idea of \cite{BSfinite}, similar statements can also be obtained for the other transfers described in Corollary~\ref{cor:mirhalf}.
\end{rem}

\medskip

\subsection{All groups}


Theorem~\ref{thm:mirrorhtpair} can be extended to other pairings of groups, as follows.
As before, let $G=(V,E)$ be a $\Gamma$-symmetric graph with respect to $\theta:\Gamma\to \textrm{Aut}(G)$, where $\theta$ acts freely on $V$.

 The notation $\mathcal{G} \leftrightarrow \mathcal{H}$ for symmetry groups $\mathcal{G}$ and  $\mathcal{H}$ in dimension $3$ with the same underlying abstract group $\Gamma$ means that there exists a 
 $\Gamma$-symmetric  spherical framework $(G,p)$ on $\mathbb{S}^2$ with respect to $\theta$ and  $\tau(\Gamma)=\mathcal{G}$, and a $\Gamma$-symmetric  spherical framework $(G,q)$ on $\mathbb{S}^2$ with respect to $\theta$ and  $\tau'(\Gamma)=\mathcal{H}$ such that $(G,q)$ is obtained from $(G,p)$ by taking an index $2$ subgroup $\Gamma'$ of $\Gamma$ and inverting each point of $(G,p)$ assigned to the set  $V\setminus \{\gamma v:\, \gamma\in \Gamma', v\in V_0\}$, where $V_0$ is a set of representatives for the vertex orbits under the group action $\theta$.

\begin{thm}\label{thm:pairs}
Let $(G,p)$ be a $\Gamma$-symmetric framework on $\mathbb{S}^2$ with respect to $\theta$ and $\tau$ and let $(G,q)$ be a $\Gamma$-symmetric framework on $\mathbb{S}^2$ with respect to $\theta$ and $\tau'$ obtained from $(G,p)$ by the partial inversion process described above.
 Then $\tau(\Gamma) \leftrightarrow \tau'(\Gamma)$ must be one of the following pairings: 
\begin{itemize}
\item $\mathcal{C}_{2} \leftrightarrow \mathcal{C}_{s}$;
\item $\mathcal{C}_{2n} \leftrightarrow \mathcal{C}_{nh}$ where $n$ is odd;
\item $\mathcal{C}_{2n}\leftrightarrow \mathcal{S}_{2n}$ where $n$ is even;
\item $\mathcal{C}_{nv}\leftrightarrow \mathcal{D}_n$ for all $n$;
\item $\mathcal{C}_{2nv}\leftrightarrow \mathcal{D}_{nd}$ where $n$ is even;
\item $\mathcal{C}_{2nv}\leftrightarrow \mathcal{D}_{nh}$ where $n$ is odd;
\item $\mathcal{T}_d\leftrightarrow \mathcal{O}$.
\end{itemize}
Moreover, 
$(G,p)$ is a $\Gamma$-symmetric (resp. forced $\Gamma$-symmetric) infinitesimally rigid  framework on $\mathbb{S}^2$ with respect to $\theta$ and $\tau$ if and only if $(G,q)$ is a $\Gamma$-symmetric (resp. forced $\Gamma$-symmetric) infinitesimally rigid  framework on $\mathbb{S}^2$ with respect to $\theta$ and $\tau'$.
\end{thm}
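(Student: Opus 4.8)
The plan is to split the equivalence into its incidental and forced parts, exploiting the fact that the partial inversion changes the representation only in a controlled way. As a preliminary step I would record the precise relationship between $\tau$ and $\tau'$. Writing $\Gamma'$ for the chosen index-$2$ subgroup and $V_0$ for the set of orbit representatives, every representative is fixed by the inversion (since $e\in\Gamma'$), so $q_i=p_i$ for all $i\in V_0$; comparing the defining symmetry equations of $(G,p)$ and $(G,q)$ then forces
\[
\tau'(\gamma)=\begin{cases}\tau(\gamma) & \gamma\in\Gamma',\\ -\tau(\gamma) & \gamma\in\Gamma\setminus\Gamma',\end{cases}
\]
where $-I$ denotes the central inversion on $\mathbb{R}^3$. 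A one-line check confirms that $\tau'$ is a homomorphism for any $\Gamma'$, which is the algebraic fact behind the admissible list of pairings.

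For the incidental statement there is nothing further to prove: the passage from $(G,p)$ to $(G,q)$ is literally an application of the inversion operator of Lemma~\ref{lem:inv}, with inverted set $I=V\setminus\{\gamma v:\gamma\in\Gamma',\ v\in V_0\}$. Since both frameworks lie on $\mathbb{S}^2$, Lemma~\ref{lem:inv} gives at once that $(G,p)$ is infinitesimally rigid if and only if $(G,q)$ is, and both are $\Gamma$-symmetric by construction.

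The forced case is the substance of the argument, and I would obtain it by generalising the orbit-matrix computation of Theorem~\ref{thm:mirrorhtpair}. Note that Lemma~\ref{lem:invforced} does not apply directly, since here $I$ is a union of $\Gamma'$-cosets rather than of full $\Gamma$-orbits. Instead I would compare the spherical orbit matrices $O_{\theta,\tau}(G,p)$ and $O_{\theta,\tau'}(G,q)$ entry by entry. Because $q_i=p_i$ on $V_0$, the two matrices are indexed identically, their vertex (scale) rows coincide, and their edge rows agree except on quotient edges $(v_i,v_j)$ whose gain $\psi_{ij}$ lies in $\Gamma\setminus\Gamma'$; on such a row $\tau'(\psi_{ij})=-\tau(\psi_{ij})$, so the row of $O_{\theta,\tau'}(G,q)$ is obtained from that of $O_{\theta,\tau}(G,p)$ by reversing the sign of every $\tau(\psi_{ij})$-term in both columns simultaneously. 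I would then realise this sign reversal by elementary row operations exactly as in Theorem~\ref{thm:mirrorhtpair}: for a non-loop edge, subtract the vertex rows of $v_i$ and $v_j$ (which contribute precisely the $p_i$ and $p_j$ terms), multiply the result by $-1$, and add the two vertex rows back; a loop in $\Gamma\setminus\Gamma'$ is handled by the same steps after first dividing by $2$. Since these operations preserve rank, $O_{\theta,\tau}(G,p)$ and $O_{\theta,\tau'}(G,q)$ have the same rank. Applying the identical sequence of operations to the orbit matrix of a complete-graph framework affinely spanning $\mathbb{R}^3$ shows the spaces of trivial $\Gamma$-symmetric motions have equal dimension, and hence forced $\Gamma$-symmetric infinitesimal rigidity is transferred in both directions.

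The step I expect to be most delicate is the forced row reduction, specifically verifying that it goes through for arbitrary gains. Unlike the prototype Theorem~\ref{thm:mirrorhtpair}, the gain $\psi_{ij}$ need not map to an involution, so the two entries of an edge row carry $\tau(\psi_{ij})$ and $\tau(\psi_{ij})^{-1}$ asymmetrically; one must check that subtracting both vertex rows flips the sign consistently in each column and that $(-\tau(\psi_{ij}))^{-1}=-\tau(\psi_{ij})^{-1}$ yields precisely the entry demanded by $\tau'$. The loop case, where the scalar factor of $2$ interacts with the sign flip, requires the same care. Once this bookkeeping is in place the conclusion follows uniformly across all seven pairings in the list.
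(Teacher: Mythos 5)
Your proposal is correct and follows essentially the same route as the paper: the incidental case via Lemma~\ref{lem:inv}, the forced case by the spherical orbit-matrix row operations of Theorem~\ref{thm:mirrorhtpair} (your careful treatment of non-involutive gains, where $\tau'(\psi_{ij})^{-1}=-\tau(\psi_{ij})^{-1}$, is exactly the bookkeeping the paper invokes by saying ``the same argument''), and your correct observation that Lemma~\ref{lem:invforced} does not apply since $I$ is not a union of full $\Gamma$-orbits. On the enumeration of the seven pairings you are no less complete than the paper itself, which defers to \cite{Con} rather than deriving the list from the relation $\tau'=\varepsilon\cdot\tau$ that you identify.
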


\begin{proof}
The proof that the stated groups are linked and all possibilities are listed can be extracted from \cite{Con}. Alternatively it can be verified directly by applying the partial inversion mentioned above, as we illustrate in one particular case in Figure~\ref{fig:transferc6}.
We have already seen that inversion preserves  infinitesimal rigidity in Lemma \ref{lem:inv}. 
To prove the statement regarding forced $\Gamma$-symmetric infinitesmal rigidity, we may use exactly the same  argument (using the spherical orbit matrix) as in the proof of Theorem~\ref{thm:mirrorhtpair}. 
\end{proof}

\begin{figure}[htp]
        \begin{center} \includegraphics[scale=0.3]{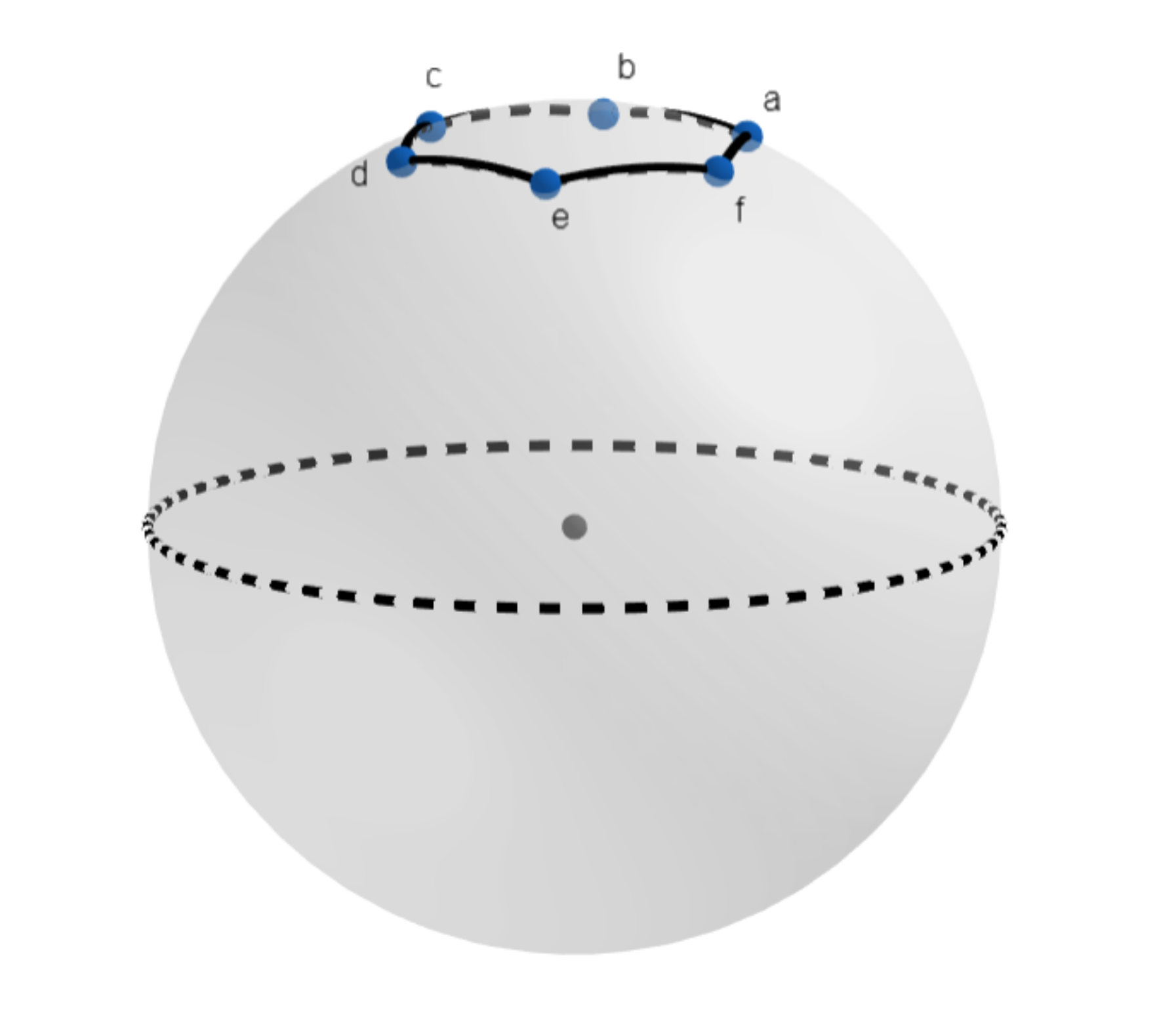} \hspace{0.6cm} 
        \includegraphics[scale=0.3]{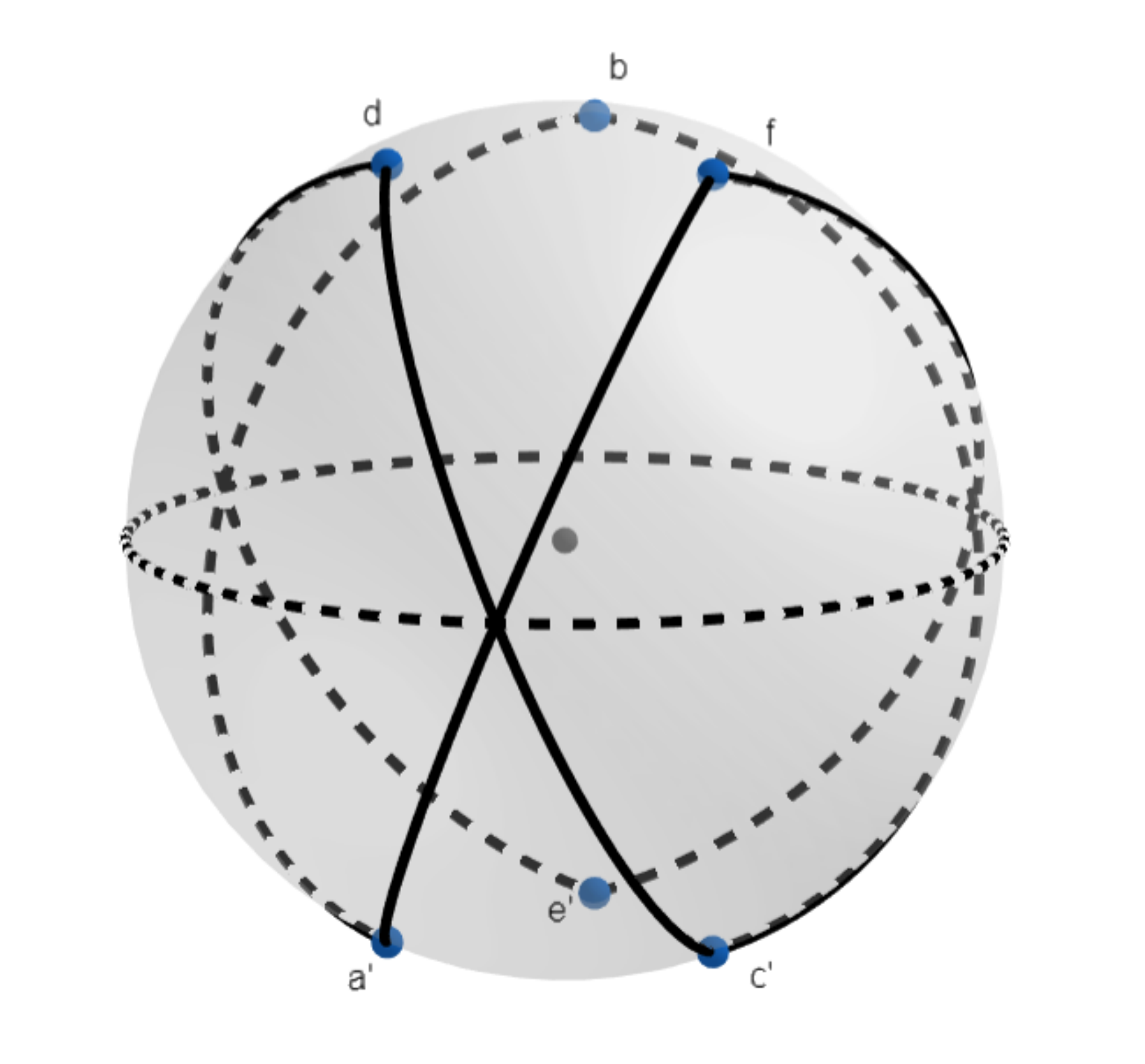}
        \end{center}   
\caption{A pair of frameworks on $\mathbb{S}^2$ with $\mathcal{C}_6$ and $\mathcal{C}_{3h}$ symmetry, illustrating Theorem~\ref{thm:pairs}.}
\label{fig:transferc6}
\end{figure}

Theorem~\ref{thm:pairs}    gives a complete classification of those symmetry groups which can be paired by partial inversion as described above. Every group not occurring in the statement either contains inversion as a group element ($\mathcal{C}_i$, $\mathcal{C}_{2nh}$, $\mathcal{D}_{2nh}$, $\mathcal{D}_{(2n+1)d}$, $\mathcal{S}_{2(2n+1)}$, $\mathcal{T}_h$, $\mathcal{O}_h$ and $\mathcal{I}_h$), or does not contain an index $2$ subgroup ($\mathcal{T}$ and $\mathcal{I}$), and hence no pairing would exist. (Note that for symmetry groups containing  inversion, the partial inversion process would not preserve the underlying abstract group.)

\subsection{Combinatorial consequences}

We obtain new combinatorial results and insights for the  $2$-sphere and the Euclidean plane from Theorem~\ref{thm:pairs}.

\subsubsection{Infinitesimal rigidity} Recall from Remark~\ref{rem1} that we currently only have combinatorial characterisations of $\Gamma$-regular infinitesimally rigid frameworks on $\mathbb{S}^2$ (or $\mathbb{R}^2$)  for the groups $\mathcal{C}_s$, $\mathcal{C}_2$ and $\mathcal{C}_n$, $n$ odd (where the action $\theta:\Gamma\to \textrm{Aut}(G)$ is free on the vertex set). It follows from Theorem~\ref{thm:pairs} that if we can extend these results for $\mathbb{S}^2$ to one of the groups listed in Theorem~\ref{thm:pairs}, then we immediately obtain the corresponding result for the paired group as a corollary. 

Moreover, if we manage to establish a combinatorial characterisation for  $\Gamma$-regular infinitesimal rigidity on $\mathbb{S}^2$, where $\tau(\Gamma)$ is a symmetry group of the form $\mathcal{C}_n$ or $\mathcal{C}_{nv}$ (for any $n\in \mathbb{N}$), then the central projection argument from Section~\ref{sec:transfer} immediately provides us with a combinatorial characterisation for  $\Gamma$-regular infinitesimal rigidity in $\mathbb{R}^2$, and vice versa. 

For symmetry groups in dimension 3 which do not exist in dimension 2 (such as  $\mathcal{C}_{nh}$, $\mathcal{S}_{2n}$, $\mathcal{D}_n$, etc.), this is not the case, since the central projection from the $2$-sphere to $\mathbb{A}^2$ (or  $\mathbb{R}^2$) would collapse the  group $\Gamma$ to a smaller group, and hence this process would generally yield a framework which is not $\Gamma'$-regular for the collapsed group $\Gamma'$. 

\subsubsection{Forced $\Gamma$-symmetric infinitesimal rigidity} As mentioned in the end of Section~\ref{subsec:sph},  combinatorial characterisations for $\Gamma$-regular \emph{forced} $\Gamma$-symmetric rigidity on $\mathbb{S}^2$ (where the action $\theta:\Gamma\to \textrm{Aut}(G)$ is free on the vertex set)  have been established for the groups $\mathcal{C}_s$, $\mathcal{C}_n$, $n\in \mathbb{N}$, $\mathcal{C}_i$, $\mathcal{C}_{nv}$, $n$ odd, $\mathcal{C}_{nh}$, $n$ odd, and $\mathcal{S}_{2n}$, $n$ even. The corresponding results for all other groups remain open, and some conjectures are given in \cite[Table 1]{NS}.

We can use the equivalence of forced $\Gamma$-symmetric infinitesimal rigidity for the pair $\mathcal{C}_{nv}\leftrightarrow \mathcal{D}_n$ given by Theorem \ref{thm:pairs} to deduce the following new result. We refer the reader to \cite[Def. 7.1]{jkt} for the definition of  a maximum $\mathcal{D}_n$-tight $\Gamma$-gain graph.

\begin{thm} 
Let $(G,p)$ be a $\Gamma$-regular framework on $\mathbb{S}^2$ with respect to $\theta$ and $\tau$, where $\tau(\Gamma)=\mathcal{D}_n$, $n$ odd. Let  $(G_0,\psi)$ be the quotient $\Gamma$-gain graph of $G$. Then $(G,p)$ is forced $\Gamma$-symmetric infinitesimally rigid if and only if $(G_0,\psi)$ contains a spanning subgraph that is maximum $\mathcal{D}_n$-tight.
\end{thm}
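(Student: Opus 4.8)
The plan is to reduce the $\mathcal{D}_n$ statement to the already-known combinatorial characterisation of forced $\mathcal{C}_{nv}$-symmetric rigidity on $\mathbb{S}^2$ (for $n$ odd, from \cite{NS}) via the pairing $\mathcal{C}_{nv}\leftrightarrow\mathcal{D}_n$ supplied by Theorem~\ref{thm:pairs}. Concretely, I would start from the given $\Gamma$-regular framework $(G,p)$ on $\mathbb{S}^2$ with $\tau(\Gamma)=\mathcal{D}_n$ and apply the partial inversion process of Theorem~\ref{thm:pairs} to obtain a $\Gamma$-symmetric framework $(G,q)$ on $\mathbb{S}^2$ with $\tau'(\Gamma)=\mathcal{C}_{nv}$; the theorem guarantees that $(G,p)$ is forced $\Gamma$-symmetric infinitesimally rigid if and only if $(G,q)$ is. The key structural observation is that the partial inversion only changes the geometric representation $\tau\mapsto\tau'$; it leaves the action $\theta:\Gamma\to\Aut(G)$ untouched, so the quotient $\Gamma$-gain graph $(G_0,\psi)$ is literally identical for $(G,p)$ and $(G,q)$. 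Since $\mathcal{C}_{nv}$ and $\mathcal{D}_n$ have the same underlying abstract dihedral group $\Gamma$ of order $2n$, the notion ``maximum $\mathcal{D}_n$-tight'' of \cite[Def.~7.1]{jkt} refers to the same combinatorial object in both settings.

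Next I would verify that $(G,q)$ is itself $\Gamma$-regular with respect to $\theta$ and $\tau'$, so that the characterisation of \cite{NS} is applicable to it. This is exactly the type of regularity-transfer argument already employed in Lemma~\ref{lem:regtran} and in the remark following Corollary~\ref{cor:mirhalf}: by Lemmas~\ref{lem:inv} and~\ref{lem:invforced}, inverting vertex orbits preserves the ranks of both the spherical rigidity matrix and the spherical orbit matrix, and it sets up a rank-preserving bijection between the space of $\mathcal{D}_n$-symmetric realisations of $G$ and the space of $\mathcal{C}_{nv}$-symmetric realisations of $G$. Hence a maximum-rank realisation is carried to a maximum-rank realisation, and the $\Gamma$-regularity of $(G,p)$ transfers to $(G,q)$.

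Finally, since $(G,q)$ is a $\Gamma$-regular framework on $\mathbb{S}^2$ with $\tau'(\Gamma)=\mathcal{C}_{nv}$ and $n$ is odd, the characterisation from \cite{NS} states that $(G,q)$ is forced $\Gamma$-symmetric infinitesimally rigid if and only if $(G_0,\psi)$ contains a spanning maximum $\mathcal{D}_n$-tight subgraph. Combining this with the rigidity equivalence from Theorem~\ref{thm:pairs} yields the claimed characterisation for $(G,p)$. I expect the only genuine technical point to be the regularity transfer in the second paragraph: one must confirm that the partial inversion restricts to a well-defined, rank-preserving correspondence on the \emph{full} configuration space of $\Gamma$-symmetric realisations (not merely on the forced-symmetric subspace), so that the `$\Gamma$-regular' hypothesis required by the \cite{NS} characterisation is genuinely inherited by $(G,q)$. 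Everything else is an immediate application of Theorem~\ref{thm:pairs} together with the cited result.
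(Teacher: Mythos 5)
Your proposal is correct and takes essentially the same route as the paper, whose proof likewise combines the pairing $\mathcal{C}_{nv}\leftrightarrow\mathcal{D}_n$ of Theorem~\ref{thm:pairs} with the known characterisation of $\Gamma$-regular forced $\Gamma$-symmetric rigidity for $\mathcal{C}_{nv}$, $n$ odd (the paper cites \cite[Theorem~8.2]{jkt}, which transfers to $\mathbb{S}^2$ via Corollary~\ref{cor:combforced}, while you invoke the spherical statement from \cite{NS} directly --- the same result). Your explicit check that the partial inversion leaves $\theta$ and hence $(G_0,\psi)$ untouched, and that $\Gamma$-regularity is inherited by $(G,q)$ via a rank-preserving correspondence on the full configuration space, is exactly the point the paper leaves implicit, so your write-up is if anything slightly more careful on the one step that genuinely needs verification.
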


\begin{proof}
\cite[Theorem 8.2]{jkt} showed that the $\Gamma$-gain graph being maximum $\mathcal{D}_n$-tight is necessary and sufficient to characterise $\Gamma$-regular forced $\Gamma$-symmetric infinitesimal rigidity for $\mathcal{C}_{nv}$ in the case when $n$ is odd. 
Theorem \ref{thm:pairs} tells us that this is equivalent to $\Gamma$-regular forced $\Gamma$-symmetric infinitesimal rigidity for $\mathcal{D}_{n}$, giving the theorem.
\end{proof}

Note that the only symmetry groups for which we do not have combinatorial characterisations  for $\Gamma$-regular   forced $\Gamma$-symmetric infinitesimal rigidity in $\mathbb{R}^2$ are the groups $\mathcal{C}_{2nv}$, $n\in\mathbb{N}$, and significant new insights are needed to solve these cases.



\subsection{Double cover frameworks} 

In the previous section, we paired up symmetry groups with the same underlying abstract group $\Gamma$. Here we will see that some rigidity statements can still be developed without this condition.

The notation  $\mathcal{G} \twoheadrightarrow  \mathcal{H}$  for symmetry groups $\mathcal{G}$ and $\mathcal{H}$ in dimension $3$ with respective abstract groups $\Gamma$ and $\Gamma'$ means that there exists  a $\Gamma$-symmetric  spherical framework   on $\mathbb{S}^2$ with respect to $\theta$ and  $\tau(\Gamma)=\mathcal{G}$, where $\mathcal{G}$ does not contain the inversion element, and  a $\Gamma'$-symmetric  spherical framework $(G',p')$  on $\mathbb{S}^2$ with respect to $\theta'$ and  $\tau'(\Gamma')=\mathcal{H}$ such that $(G',p')$ is obtained from $(G,p)$ by taking the union of $(G,p)$ with the framework $(G,q)$ where $q$ is defined by $q(i)=-p(i)$ for each vertex $i$ of $G$.  Clearly, we have $2|\Gamma| = |\Gamma'|$. We say that $(G',p')$ is the \emph{double cover framework} of  $(G,p)$.


The most basic example is the pair  $\mathcal{C}_1 \twoheadrightarrow \mathcal{C}_i$, where $\mathcal{C}_1$ is  the trivial group and $\mathcal{C}_i$ is the inversion group in dimension 3. 

 The process of constructing the double cover framework $(G',p')$ of a spherical framework $(G,p)$ will clearly not preserve  infinitesimal rigidity since $(G',p')$ is disconnected, and hence will contain a $3$-dimensional space of non-trivial infinitesimal motions even when $(G,p)$ is infinitesimally rigid on $\mathbb{S}^2$. However, since none of these infinitesimal motions are $\Gamma'$-symmetric, this construction does preserve forced-symmetric infinitesimal rigidity. That is, we have the following result.

\begin{thm} 
Let $(G,p)$ be a $\Gamma$-symmetric  framework on $\mathbb{S}^2$ with respect to $\theta$ and $\tau$, and let $(G',p')$ be the $\Gamma'$-symmetric  double cover framework of $(G,p)$ on $\mathbb{S}^2$ with respect to $\theta'$ and $\tau'$. Then $\tau(\Gamma) \twoheadrightarrow \tau'(\Gamma')$ must be one of the following pairings.
\begin{itemize}
\item $\mathcal{C}_1 \twoheadrightarrow \mathcal{C}_i$;
\item $\mathcal{C}_s \twoheadrightarrow \mathcal{C}_{2h}$;
\item $\mathcal{C}_n\twoheadrightarrow \mathcal{S}_{2n}$, where $n$ is odd;
\item $\mathcal{C}_{n} \twoheadrightarrow \mathcal{C}_{nh}$, where $n$ is even;
\item $\mathcal{C}_{nv} \twoheadrightarrow \mathcal{D}_{nd}$, where $n$ is odd;
\item $\mathcal{C}_{nv} \twoheadrightarrow \mathcal{D}_{nh}$, where $n$ is even;
\item $\mathcal{C}_{nh} \twoheadrightarrow \mathcal{C}_{2nh}$, where $n$ is odd;
\item $\mathcal{S}_{2n} \twoheadrightarrow \mathcal{C}_{2nh}$ where $n$ is even;
\item $\mathcal{D}_{n} \twoheadrightarrow \mathcal{D}_{nd}$, where $n$ is odd;
\item $\mathcal{D}_{n} \twoheadrightarrow \mathcal{D}_{nh}$, where $n$ is even;
\item $\mathcal{D}_{nh} \twoheadrightarrow \mathcal{D}_{2nh}$, where $n$ is odd;
\item $\mathcal{D}_{nd} \twoheadrightarrow \mathcal{D}_{2nh}$, where $n$ is even;
\item $\mathcal{T} \twoheadrightarrow \mathcal{T}_{h}$; $\mathcal{T}_d \twoheadrightarrow \mathcal{O}_{h}$; $\mathcal{O} \twoheadrightarrow \mathcal{O}_{h}$; $\mathcal{I} \twoheadrightarrow \mathcal{I}_{h}$;
\end{itemize}
Moreover $(G,p)$ is forced $\Gamma$-symmetric infinitesimally rigid on $\mathbb{S}^2$ if and only if $(G',p')$ is forced $\Gamma'$-symmetric  infinitesimally rigid on $\mathbb{S}^2$.
\end{thm}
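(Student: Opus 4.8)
The plan is to prove the two assertions separately: the group-theoretic classification of the admissible pairings, and the equivalence of forced-symmetric infinitesimal rigidity. Both will follow once the symmetry group of the double cover is identified correctly.

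First I would pin down $\tau'(\Gamma')$. Writing the vertex set of $G'$ as $V\times\{1,2\}$ with $p'(i,1)=p_i$ and $p'(i,2)=-p_i$, note that $(G,q)$ with $q_i=-p_i$ is again $\Gamma$-symmetric, so $\Gamma$ acts on $G'$ by preserving each copy, while the new element $\sigma$ exchanging the two copies is realised geometrically by the central inversion $\iota=-I$ of $\mathbb{R}^3$. Since $\iota$ is central in $O(\mathbb{R}^3)$ and, by hypothesis, $\iota\notin\mathcal{G}=\tau(\Gamma)$, one checks that $\sigma$ commutes with $\Gamma$ and $\sigma^2=1$, whence $\Gamma'\cong\Gamma\times\mathbb{Z}_2$ and $\tau'(\Gamma')=\langle\mathcal{G},\iota\rangle=\mathcal{G}\times\mathcal{C}_i$. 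The classification then reduces to computing $\mathcal{G}\times\mathcal{C}_i$ for every point group $\mathcal{G}$ in dimension $3$ that does not contain the inversion; running through the Schoenflies list (as in \cite{Con}, or by the direct case analysis used for Theorem~\ref{thm:pairs}) produces exactly the stated pairings, and every resulting group $\mathcal{H}$ contains $\iota$, as it must.

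For the rigidity equivalence I would exhibit an explicit linear isomorphism, given by restriction to copy $1$, between the space of $\Gamma'$-symmetric infinitesimal motions of $(G',p')$ and the space of $\Gamma$-symmetric infinitesimal motions of $(G,p)$. The condition $\tau'(\sigma)\dot p'(i,1)=\dot p'(i,2)$ forces $\dot p'(i,2)=-\dot p'(i,1)$, so a $\Gamma'$-symmetric motion is determined by its copy-$1$ part $u_i:=\dot p'(i,1)$. As $G'$ is a disjoint union in which copy $1$ carries precisely the framework $(G,p)$, the copy-$1$ constraints are exactly those of $(G,p)$; and substituting $q=-p$, $\dot q=-u$ into the spherical constraints $\langle p_i,\dot p_j\rangle+\langle p_j,\dot p_i\rangle=0$ shows the copy-$2$ constraints hold automatically once the copy-$1$ ones do. Conversely every $\Gamma$-symmetric motion $u$ of $(G,p)$ lifts by the same formula, and the map is bijective because the points $\pm p_i$ span $\mathbb{R}^3$; hence the two full motion spaces have equal dimension.

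The crucial remaining point — and the one I expect to require the most care — is matching the \emph{trivial} motions, since $(G',p')$ is disconnected and therefore carries many extra flexes (e.g.\ rotating one antipodal copy relative to the other). A trivial motion on $\mathbb{S}^2$ has the form $v\mapsto Sv$ for skew-symmetric $S$, and restricting to copy $1$ gives $u_i=Sp_i$. Now $v\mapsto Sv$ is $\Gamma'$-symmetric exactly when $S$ commutes with $\tau'(\Gamma')$, but because $\iota=-I$ is central this is identical to the requirement that $S$ commute with $\tau(\Gamma)$, i.e.\ exactly the condition for $u_i=Sp_i$ to be a $\Gamma$-symmetric trivial motion of $(G,p)$. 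Thus the restriction isomorphism carries trivial motions bijectively onto trivial motions, the extra flexes from disconnectedness simply failing to be $\Gamma'$-symmetric and never entering the picture. Since the isomorphism therefore descends to the quotients (motions modulo trivial motions), $(G',p')$ is forced $\Gamma'$-symmetric infinitesimally rigid if and only if $(G,p)$ is forced $\Gamma$-symmetric infinitesimally rigid, completing the proof.
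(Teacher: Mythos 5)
Your proposal is correct and takes essentially the same route as the paper: you classify the pairings by adjoining the central inversion $-I$ to each point group not containing it (precisely the paper's observation that the left-hand list consists of all three-dimensional symmetry groups without the inversion element), and you derive the rigidity equivalence from the fact that $\Gamma'$-symmetry forces $\dot p'(i,2)=-\dot p'(i,1)$, so the disconnected antipodal copy contributes no new symmetric flexes. Your explicit restriction/lift isomorphism, together with the matching of trivial motions via skew-symmetric $S$ (which works because $S(-p_i)=-Sp_i$, equivalently by centrality of $-I$, under the paper's standing spanning assumption), simply fills in the details that the paper compresses into its one-line remark that any $\Gamma'$-symmetric infinitesimal motion preserves the first-order distances between points in distinct components.
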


\begin{proof}
The groups $\tau(\Gamma)$ on the left hand side of the list of group pairings shown above are all the symmetry groups in dimension $3$ that do not contain the inversion element. 
 It is easy to check that the corresponding groups $\tau'(\Gamma')$ listed above satisfy $\tau(\Gamma) \twoheadrightarrow \tau'(\Gamma')$.

To see the final statement of the theorem, note that by definition, the double cover framework $(G',p')$ consists of two connected components with $\tau(\Gamma)$ symmetry, and the two components are images of each other under inversion. Any $\Gamma'$-symmetric infinitesimal motion of $(G',p')$ must preserve the (first order) distances between any pair of points lying in distinct components. 
This gives the result.
\end{proof}

Note that  transferring the result above to Euclidean space will result in the double cover frameworks having $|V(G')|/2|=|V(G)|$ pairs of coincident points. Hence when we have a combinatorial understanding of the smaller group, then the theorem gives us some combinatorial information about symmetric frameworks with pairs of coincident points. 
Note however that the general problem of characterising infinitesimal rigidity with a number of pairs of coincident points is likely to be challenging \cite{FJK}.

\section{Pairings in higher dimensions}
\label{sec:higher}

The result in Theorem \ref{thm:mirrorhtpair} can be easily generalised to higher dimensions. For simplicity, we will focus our discussion  on symmetry groups consisting of only involutions. Note that in our context an involution is an inversion in a $k$-dimensional subspace. 

Let $S$ be a $k$-dimensional subspace of $\mathbb{R}^d$ for some $k<d$. We  denote inversion in $S$ by $\iota_S$. So the matrix representing the isometry $\iota_S$ is the diagonal matrix with $1$'s corresponding to the ``dimensions of $S$'' and $-1$'s otherwise. Any involution $\iota_S$ clearly gives us a symmetry group  of order 2, which we denote by $\mathcal{C}_{\iota_S}$.




\begin{thm}\label{thm:inversionpair} 
Let $G=(V,E)$ be a graph and let $\theta:\mathbb{Z}_2\to \textrm{Aut}(G)$ act freely on $V$. Further, let $S_1,S_2$ be subspaces of $\mathbb{R}^{d+1}$ such that $\dim S_1=k_1, \dim S_2=k_2$, $k_1+k_2=d+1$ and $S_1\cap S_2$ is 0-dimensional, and let $X$ be a (possibly empty) subset of $V$. Then the following are equivalent:
\begin{itemize}
\item[(a)] $G$ can be realised as an infinitesimally rigid (resp. forced $\mathbb{Z}_2$-symmetric infinitesimally rigid)  $\mathbb{Z}_2$-symmetric bar-joint framework on $\mathbb{S}^d$ with respect to $\theta$ and $\tau:\mathbb{Z}_2\to \mathcal{C}_{\iota_{S_1}}$, where points assigned to $X$ lie on a great circle.
\item[(b)] $G$ can be realised as an infinitesimally rigid (resp. forced $\mathbb{Z}_2$-symmetric infinitesimally rigid) $\mathbb{Z}_2$-symmetric bar-joint framework on $\mathbb{S}^d$ with respect to $\theta$ and $\tau':\mathbb{Z}_2\to \mathcal{C}_{\iota_{S_2}}$, where points assigned to $X$ lie on a great circle.
\end{itemize}
\end{thm}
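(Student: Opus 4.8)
The plan is to follow the proof of Theorem~\ref{thm:mirrorhtpair} essentially verbatim, with the reflection and half-turn replaced by the inversions $\iota_{S_1}$ and $\iota_{S_2}$. The one algebraic fact that drives the whole argument is that for an orthogonal inversion one has $-\iota_S=\iota_{S^\perp}$; equivalently, composing $\iota_S$ with the antipodal map $-I$ of $\mathbb{R}^{d+1}$ yields the inversion in the orthogonal complement $S^\perp$.

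First I would reduce to the case $S_2=S_1^\perp$. Since $\dim S_2=k_2=(d+1)-k_1=\dim S_1^\perp$, the orthogonal involutions $\iota_{S_2}$ and $\iota_{S_1^\perp}$ have equal eigenvalue multiplicities and are therefore conjugate in $O(\mathbb{R}^{d+1})$; choose $g\in O(\mathbb{R}^{d+1})$ with $g(S_1^\perp)=S_2$. Applying $g$ to a framework is a congruence of $\mathbb{S}^d$, so it preserves both infinitesimal rigidity and forced $\mathbb{Z}_2$-symmetric infinitesimal rigidity, maps great circles to great circles, and carries $\iota_{S_1^\perp}$-symmetric realisations bijectively to $\iota_{S_2}$-symmetric ones (conjugating the representation while leaving $\theta$ fixed). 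Hence it suffices to prove the equivalence with $S_2$ replaced by $S_1^\perp$.

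With $S_2=S_1^\perp$ fixed, consider first the incidental case. Writing $V_0=\{v_1,\dots,v_n\}$ for a set of orbit representatives with $\theta(-1)v_i=v_i'$, an $\iota_{S_1}$-symmetric framework $(G,p)$ satisfies $p(v_i')=\iota_{S_1}p(v_i)$. Applying the partial inversion $\iota$ of Lemma~\ref{lem:inv} to $I=V\setminus V_0$ fixes each $p(v_i)$ and replaces $p(v_i')$ by $-\iota_{S_1}p(v_i)=\iota_{S_1^\perp}p(v_i)$, so $(G,\iota\circ p)$ is $\iota_{S_1^\perp}$-symmetric. The operation is reversible, preserves infinitesimal rigidity by Lemma~\ref{lem:inv}, and preserves the great-circle condition on $X$, since the antipodal map fixes every linear subspace and hence sends each great circle to itself.

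For the forced case I would run the spherical orbit-matrix computation of Theorem~\ref{thm:mirrorhtpair}; note that Lemma~\ref{lem:invforced} does not apply directly, as $I=V\setminus V_0$ is not a union of vertex orbits. The identity $\iota_{S_1}p_i=-\iota_{S_1^\perp}p_i$ is exactly the relation $\tau(-1)p_i=-\tau'(-1)p_i$ used there, so the same elementary row operations---acting only on the rows of gain-$(-1)$ edges and loops, and leaving the gain-$1$ and normalisation rows untouched---turn $O_{\theta,\tau}(G,p)$ into $O_{\theta,\tau'}(G,\iota\circ p)$ with no change of rank. Testing on the complete graph that affinely spans $\mathbb{R}^{d+1}$ then shows the spaces of trivial $\mathbb{Z}_2$-symmetric motions have equal dimension, giving the forced equivalence. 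I expect the only real work to be the bookkeeping in this last step---checking the row operations for every gain label and for loops in the higher-dimensional orbit matrix---while the conjugation reduction and the incidental argument are routine once the identity $-\iota_S=\iota_{S^\perp}$ is in hand.
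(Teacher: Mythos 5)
Your proposal is correct and follows essentially the same route as the paper's proof: partial inversion on $V\setminus V_0$ driven by the identity $-\iota_{S_1}=\iota_{S_1^{\perp}}$ together with Lemma~\ref{lem:inv} for the incidental case, and the orbit-matrix row operations of Theorem~\ref{thm:mirrorhtpair} for the forced case. Your explicit conjugation reduction to $S_2=S_1^{\perp}$ merely makes rigorous the ``without loss of generality'' coordinate choice in the paper's argument, and your observation that Lemma~\ref{lem:invforced} does not apply directly (since $V\setminus V_0$ is not a union of vertex orbits) is consistent with the paper's reliance on row operations rather than that lemma.
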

\begin{proof}
Let $\mathbb{Z}_2=\{1,-1\}$. Suppose that $V_0=\{v_1,v_2,\dots,v_n\}$ is a set of representatives for the vertex orbits of $G$ under the action of $\theta$, and that $G$ has vertex set $\{v_1,v_1',v_2,v_2',\dots,v_n,v_n'\}$, with $\theta(-1)v_i=v_i'$ for all $i=1,\ldots, n$. Let $(G,p)$ be a $\mathbb{Z}_2$-symmetric framework on $\mathbb{S}^d$ with respect to $\theta$ and $\tau:\mathbb{Z}_2\to \mathcal{C}_{\iota_{S_1}}$. We denote $p(v_i)=(x_{i_1}, x_{i_2},\ldots , x_{i_{d+1}})$ and we assume without loss of generality that $\tau(-1)p(v_i)=p(v_i')=(-x_{i_1}, -x_{i_2},\ldots , -x_{i_k}, x_{i_{k+1}}, \ldots, x_{i_{d+1}})$. Applying inversion to the set $I=V-V_0$ gives us $(x_{i_1}, x_{i_2},\ldots ,  x_{i_{d+1}})$ for each $v_i\in V_0$ and $(x_{i_1}, x_{i_2},\ldots , x_{i_k}, -x_{i_{k+1}}, \ldots, -x_{i_{d+1}})$ for each $v_i'\in V-V_0$. Note that $$ (x_{i_1}, x_{i_2},\ldots , x_{i_k}, -x_{i_{k+1}}, \ldots, -x_{i_{d+1}})=\tau'(-1)p(v_i).$$ It follows that $\mathcal{C}_{\iota_{S_1}}\leftrightarrow \mathcal{C}_{\iota_{S_2}}$. Since points on a great circle remain on the great circle under the partial inversion above, the proof is complete, by Lemma~\ref{lem:inv}.

The equivalence of (a) and (b) for forced $\mathbb{Z}_2$-symmetric infinitesimal rigidity follows in a similar manner (that is, via a sequence of row operations) to the proof of Theorem \ref{thm:mirrorhtpair}.
\end{proof}

Theorem~\ref{thm:inversionpair} in the case when $d=3$ gives us results which can be transferred to $\mathbb{R}^3$. In particular this yields a generalisation of the pairing between mirror symmetry and half-turn symmetry in the plane. In $3$-space, the corresponding pairing is mirror symmetry and inversion in a point, as the following result shows.


\begin{cor}\label{cor:mirinv3d} Let $G=(V,E)$ be a graph and let $\theta:\mathbb{Z}_2\to \textrm{Aut}(G)$ act freely on $V$.
Then the following are equivalent.
 \begin{itemize}
\item[(a)] $G$ can be realised as an infinitesimally rigid (resp. forced $\mathbb{Z}_2$-symmetric infinitesimally rigid) bar-joint framework in $\mathbb{R}^3$ with respect to $\theta$ and $\tau:\mathbb{Z}_2\to \mathcal{C}_s$.
\item[(b)]  $G$ can be realised as an infinitesimally rigid (resp. forced $\mathbb{Z}_2$-symmetric infinitesimally rigid) bar-joint framework in $\mathbb{R}^3$ with respect to $\theta$ and $\tau':\mathbb{Z}_2\to \mathcal{C}_i$.
\end{itemize}
\end{cor}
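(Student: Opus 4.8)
The plan is to deduce the corollary from Theorem~\ref{thm:inversionpair} in the case $d=3$ (so the relevant sphere is $\mathbb{S}^3\subset\mathbb{R}^4$), combined with the central-projection transfer between bar-joint frameworks in $\mathbb{R}^3$ and spherical frameworks on $\mathbb{S}^3$ developed in Sections~\ref{sec:transfer} and~\ref{sec:forced}, following the template of the proof of Corollary~\ref{cor:mirhalf}. Concretely, I would lift a $\mathcal{C}_s$-symmetric bar-joint framework in $\mathbb{R}^3$ to a spherical framework on $\mathbb{S}^3$ with all points in the open upper hemisphere, use the partial-inversion pairing of Theorem~\ref{thm:inversionpair} to turn the reflection symmetry into an inversion symmetry, and then project back down to obtain a $\mathcal{C}_i$-symmetric framework in $\mathbb{R}^3$; the reverse direction is identical.

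First I would record the augmented representations. Taking $\tau(-1)$ to be the reflection in the plane $x_1=0$ of $\mathbb{R}^3$, the augmented representation $\tilde\tau(-1)=\mathrm{diag}(-1,1,1,1)$ is the inversion $\iota_{S_1}$ in the $3$-dimensional subspace $S_1=\mathrm{span}(e_2,e_3,e_4)$, which contains the pole $e_4$. The group $\mathcal{C}_i$ (inversion through the origin in $\mathbb{R}^3$) has augmented representation $\mathrm{diag}(-1,-1,-1,1)=\iota_{S_2}$, the inversion in the pole line $S_2=\mathrm{span}(e_4)$. These fixed spaces have complementary dimensions ($\dim S_1+\dim S_2=4=d+1$), which is exactly what Theorem~\ref{thm:inversionpair} needs, but $S_1\cap S_2=\mathrm{span}(e_4)\neq\{0\}$; since the augmented representation always fixes $e_4$, no reflection can be paired with $\mathcal{C}_i$ \emph{directly}. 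This is precisely the obstruction already met in Corollary~\ref{cor:mirhalf}, where the half-turn axis had to be rotated onto the pole before projecting.

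I would resolve it in the same way. Apply Theorem~\ref{thm:inversionpair} to $S_1$ together with a genuinely complementary line, say $S_2'=\mathrm{span}(e_1)$ (so $S_1\cap S_2'=\{0\}$); this produces an $\iota_{S_2'}=\mathrm{diag}(1,-1,-1,-1)$-symmetric spherical framework on $\mathbb{S}^3$ which is infinitesimally rigid (resp.\ forced $\mathbb{Z}_2$-symmetric infinitesimally rigid) if and only if the original framework is. Since the fixed axis $\mathrm{span}(e_1)$ of this inversion is not the pole, I would conjugate by a rotation $\gamma\in SO(4)$ in the $e_1e_4$-plane sending $e_1\mapsto e_4$, which carries $\iota_{S_2'}$ to $\iota_{\mathrm{span}(e_4)}=\mathrm{diag}(-1,-1,-1,1)$, the augmented representation of $\mathcal{C}_i$. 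As $\gamma$ is an isometry of $\mathbb{S}^3$ it preserves (forced) infinitesimal rigidity, and after applying $\gamma$ and, via Lemmas~\ref{lem:inv} and~\ref{lem:invforced}, inverting vertex orbits to push every point into the open upper hemisphere (so that no point lies on the equator), the symmetry fixes the pole. Central projection to the affine hyperplane $x_4=1\cong\mathbb{R}^3$ then yields the desired $\mathcal{C}_i$-symmetric bar-joint framework, and Theorem~\ref{thm:symtrans} together with Corollaries~\ref{cor:combincidental} and~\ref{cor:combforced} ensures that this final transfer preserves both infinitesimal rigidity and forced $\mathbb{Z}_2$-symmetric infinitesimal rigidity.

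The only genuinely new point compared with Corollary~\ref{cor:mirhalf} is this reorientation step, and it is the part requiring the most care: because the augmented fixed spaces of $\mathcal{C}_s$ and $\mathcal{C}_i$ share the pole, one cannot match them by a single partial inversion, and must instead pair $\mathcal{C}_s$ with an off-pole inversion and then rotate its axis onto the pole. This is unproblematic since all $1$-dimensional inversions of $\mathbb{R}^4$ are conjugate by rotations and rotations preserve spherical (forced) rigidity; the bookkeeping needed to keep all points off the equator during the rotation is handled exactly as in the proof of Corollary~\ref{cor:mirhalf}.
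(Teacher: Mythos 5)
Your proposal is correct and takes essentially the same route as the paper's own proof: both apply Theorem~\ref{thm:inversionpair} on $\mathbb{S}^3$ with $S_1$ the mirror hyperplane and the genuinely complementary perpendicular axis (your $S_2'$, the paper's $S$), then rotate that axis onto the pole, push all points into the strict upper hemisphere, and project centrally to recover $\mathcal{C}_i$ in $\mathbb{R}^3$, exactly following the template of Corollary~\ref{cor:mirhalf}. The only immaterial differences are the order of operations (the paper inverts into the strict left hemisphere before rotating, you rotate first and then invert orbits upward via Lemma~\ref{lem:invforced}) and your citation of Corollaries~\ref{cor:combincidental} and~\ref{cor:combforced} for the final transfer, where the precise references are Theorem~\ref{thm:symtrans} and the forced-symmetric transfer theorem of Section~\ref{sec:forced}.
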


\begin{proof} The proof is analogous to the one for Corollary~\ref{cor:mirhalf}. By Theorem~\ref{thm:inversionpair}  there exist two infinitesimally rigid (resp. forced $\mathbb{Z}_2$-symmetric infinitesimally rigid) frameworks $(G,p)$ and $(G,q)$ on $\mathbb{S}^3$ which are $\mathbb{Z}_2$-symmetric  with respective symmetry groups $\mathcal{C}_s$ and $\mathcal{C}_{\iota_S}$, where $S$ is a line that is perpendicular to the mirror hyperplane  for the reflection in $\mathcal{C}_s$. We denote the coordinates of a point in $\mathbb{R}^4$ by $(x,y,w,z)$, and we suppose without loss of generality that the mirror hyperplane for the reflection in $\mathcal{C}_s$  is given by $x=0$, and that  $S$ is the $x$-axis. We may assume that $(G,p)$ has no point on the equator (i.e. on the hyperplane $z=0$) and no point on the mirror $x=0$. This is true if and only if $(G,q)$ also has no point on the $z=0$ or $x=0$ hyperplane. 

As described in the proof of Corollary~\ref{cor:mirhalf}, we may apply partial inversion to points of $(G,p)$ to move all points into the strict upper hemisphere (i.e. $z>0$ for all points), followed by a central projection of the resulting framework to the affine plane $z=1$ to obtain a bar-joint framework in $\mathbb{R}^3$ with symmetry group $\mathcal{C}_s$. 

Similarly, we may apply partial inversion to points of $(G,q)$ to move all points onto the strict left hemisphere (i.e. $x<0$ for all points), followed by a rotation of the whole framework by $\pi/2$ (taking the $x$-axis to the $z$-axis) to move all points onto the strict upper hemisphere. Central projection of the resulting framework to the affine plane $z=1$ then yields a bar-joint framework in $\mathbb{R}^3$ with symmetry group $\mathcal{C}_i$.

Since all of these operations preserve infinitesimal rigidity (resp. forced $\mathbb{Z}_2$-symmetric infinitesimal rigidity), the result follows.
\end{proof}

Similarly, we obtain the following result, which shows that $\mathcal{C}_{2v}$ and $\mathcal{C}_{2h}$ are also paired up in $\mathbb{R}^3$.

\begin{cor}\label{cor:c2vc2h3d} Let $G=(V,E)$ be a graph and let $\theta:\mathbb{Z}_2\times \mathbb{Z}_2\to \textrm{Aut}(G)$ act freely on $V$.
Then the following are equivalent.
 \begin{itemize}
\item[(a)] $G$ can be realised as an infinitesimally rigid (resp. forced $\mathbb{Z}_2\times \mathbb{Z}_2$-symmetric infinitesimally rigid) bar-joint framework in $\mathbb{R}^3$ with respect to $\theta$ and $\tau:\mathbb{Z}_2\times \mathbb{Z}_2\to \mathcal{C}_{2v}$.
\item[(b)]  $G$ can be realised as an infinitesimally rigid (resp. forced $\mathbb{Z}_2\times \mathbb{Z}_2$-symmetric infinitesimally rigid) bar-joint framework in $\mathbb{R}^3$ with respect to $\theta$ and $\tau':\mathbb{Z}_2\times \mathbb{Z}_2\to \mathcal{C}_{2h}$.
\end{itemize}
\end{cor}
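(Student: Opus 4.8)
The plan is to mimic the proof of Corollary~\ref{cor:mirinv3d}, realising the pairing on the sphere $\mathbb{S}^3\subset\mathbb{R}^4$ via a partial inversion and then centrally projecting both frameworks to $\mathbb{R}^3$. Write $\mathbb{Z}_2\times\mathbb{Z}_2=\langle a,b\rangle$ and use coordinates $(x,y,w,z)$ on $\mathbb{R}^4$, with $z$ the augmentation coordinate along which we project. I would arrange the $\mathcal{C}_{2v}$ realisation so that its augmented representation $\tilde\tau$ sends $a\mapsto\operatorname{diag}(-1,-1,1,1)$ (the half-turn $C_2$ about the $w$-axis), $b\mapsto\operatorname{diag}(-1,1,1,1)$ (a vertical mirror $\sigma_v$) and $ab\mapsto\operatorname{diag}(1,-1,1,1)$ (the second vertical mirror $\sigma_v'$). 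The target augmented $\mathcal{C}_{2h}$ representation sends the nontrivial elements to a half-turn, a horizontal mirror and the $\mathbb{R}^3$-inversion, the last of which augments to $\operatorname{diag}(-1,-1,-1,1)$, i.e. inversion in the $z$-axis. Using Theorem~\ref{thm:symtrans} and the forced analogue proved in Section~\ref{sec:forced}, the existence of an infinitesimally rigid (resp. forced-symmetric infinitesimally rigid) Euclidean realisation of either type is equivalent to the existence of the corresponding spherical realisation on $\mathbb{S}^3$, so it suffices to pair the two spherical models.

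The crucial choice is the index-$2$ subgroup used for the partial inversion. I would take $\Gamma'=\langle b\rangle$, generated by one of the two reflections, and invert every point assigned to $V\setminus\{\gamma v:\gamma\in\Gamma',\,v\in V_0\}=\{av,(ab)v:v\in V_0\}$. As in the proofs of Theorems~\ref{thm:mirrorhtpair} and \ref{thm:inversionpair}, the resulting spherical framework is symmetric with respect to the representation obtained from $\tilde\tau$ by negating the matrices of $a$ and $ab$, namely $a\mapsto\operatorname{diag}(1,1,-1,-1)$, $b\mapsto\operatorname{diag}(-1,1,1,1)$ and $ab\mapsto\operatorname{diag}(-1,1,-1,-1)$. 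This group fixes the line $\mathbb{R}e_2$, and I would conjugate it by the rotation $R\in SO(4)$ rotating the $(y,z)$-plane by $\pi/2$ (so that $Re_2=e_4$); a direct computation shows the conjugated matrices are $\operatorname{diag}(1,-1,-1,1)$, $\operatorname{diag}(-1,1,1,1)$ and $\operatorname{diag}(-1,-1,-1,1)$, which all fix $e_4$ and form exactly the augmented $\mathcal{C}_{2h}$ group (with $C_2$-axis the $x$-axis and horizontal mirror $\{x=0\}$). The quickest confirmation that the pairing lands in the right group is that both representations send $a,b,ab$ to elements of trace $0,2,-2$, so they are conjugate in $O(4)$, and $R$ realises this conjugacy.

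With the spherical pairing established, rigidity is transported exactly as in the earlier results: inversion of points on $\mathbb{S}^3$ preserves incidental infinitesimal rigidity by Lemma~\ref{lem:inv}, while for the forced $\mathbb{Z}_2\times\mathbb{Z}_2$-symmetric case I would run the same elementary row operations on the spherical orbit matrix as in the proof of Theorem~\ref{thm:mirrorhtpair}, applied to each edge orbit according to whether its gain lies in $\Gamma'$ and using the sign relation $\tilde\tau'(\gamma)=\pm\tilde\tau(\gamma)$. To descend to $\mathbb{R}^3$ I would follow Corollary~\ref{cor:mirinv3d}: after a symmetry-preserving perturbation ensuring no vertex lies on the relevant equator or mirror, apply $R$ to obtain a genuinely augmented $\mathcal{C}_{2h}$ spherical framework, then use full-orbit inversions (Lemmas~\ref{lem:inv} and \ref{lem:invforced}) to move every point into the open upper hemisphere, and finally centrally project to the affine plane $z=1\cong\mathbb{R}^3$. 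This produces the $\mathcal{C}_{2v}$- and $\mathcal{C}_{2h}$-symmetric Euclidean frameworks, and since every step is reversible, both implications follow.

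The step I expect to be the main obstacle — and the only genuinely new content beyond Corollary~\ref{cor:mirinv3d} — is verifying that the partial inversion lands in $\mathcal{C}_{2h}$, and in particular choosing the correct index-$2$ subgroup. Inverting along $\Gamma'=\langle b\rangle$ (or $\langle ab\rangle$) works, but inverting along $\langle a\rangle=\{1,C_2\}$ instead produces the involutions $\operatorname{diag}(-1,-1,1,1)$, $\operatorname{diag}(1,-1,-1,-1)$, $\operatorname{diag}(-1,1,-1,-1)$, whose common fixed space is trivial; such a group is not conjugate to any augmented representation and hence cannot be centrally projected to a symmetry group in $\mathbb{R}^3$. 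Thus the heart of the argument is to isolate the reflection-generated subgroups as exactly those producing a post-inversion group that fixes a line, and to confirm, via the trace/rotation computation above, that this line projects the framework onto genuine $\mathcal{C}_{2h}$ symmetry rather than some other copy of the Klein four-group.
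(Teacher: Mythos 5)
Your proposal is correct and follows essentially the same route as the paper's proof: a partial inversion on $\mathbb{S}^3$ along a reflection-generated index-$2$ subgroup of $\mathbb{Z}_2\times\mathbb{Z}_2$, then a rotation carrying the common fixed line of the resulting (non-augmented) group to the projection axis, followed by full-orbit inversions and central projection to $\mathbb{R}^3$, with Lemma~\ref{lem:inv} and the orbit-matrix row operations from Theorem~\ref{thm:mirrorhtpair} transporting incidental and forced rigidity. Your explicit representation-matrix computations agree with the paper's coordinate calculations (up to swapping the roles of the $x$- and $y$-axes), and your added checks — the trace verification and the observation that inverting along $\langle a\rangle=\{1,C_2\}$ yields a group with trivial fixed space — are sound refinements within the same argument.
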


\begin{proof} First, we claim that on $\mathbb{S}^3$, the symmetry group $\mathcal{C}_{2v}$ with the two reflections having the respective mirror hyperplanes $x=0$ and $y=0$,  and the half-turn having the $2$-dimensional axis $x=y=0$, is paired with the symmetry group $\mathcal{S}$ generated by the half-turn with the axis $w=z=0$ and the generalised inverson in the $y$-axis.

To see this, consider a vertex orbit of a $\mathbb{Z}_2\times \mathbb{Z}_2$-symmetric framework $(G,p)$ on $\mathbb{S}^3$ with respect to $\theta$ and $\tilde{\tau}:\mathbb{Z}_2\times \mathbb{Z}_2\to \mathcal{C}_{2v}$. The points of $(G,p)$ corresponding to such an orbit are of the form $(x,y,w,z)$, $(-x,y,w,z)$, $(-x,-y,w,z)$, $(x,-y,w,z)$. Now, invert the two points corresponding to the orbit of $(x,y,w,z)$ under the reflection in the $y=0$ hyperplane (i.e., the points $(x,y,w,z)$ and $(-x,y,-w,-z)$)  to obtain  the orbit $(-x,-y,-w,-z)$, $(-x,y,w,z)$, $(-x,-y,w,z)$, $(-x,y,-w,-z)$. If we do this for each vertex orbit of $(G,p)$, then we obtain a $\mathbb{Z}_2\times \mathbb{Z}_2$-symmetric framework $(G,q)$ on $\mathbb{S}^3$ with respect to $\theta$ and $\tilde{\tau}':\mathbb{Z}_2\times \mathbb{Z}_2\to \mathcal{S}$, as claimed.

Consider the spherical framwork $(G,p)$ with $\mathcal{C}_{2v}$ symmetry. Invert orbits of  points of $(G,p)$ to move all points onto the strict upper hemishere ($z>0$) and then centrally project the resulting framework onto the hyperplane $z=1$ to obtain a $\mathbb{Z}_2\times \mathbb{Z}_2$-symmetric  bar-joint framework in $\mathbb{R}^3$ with respect to $\theta$ and $\tau:\mathbb{Z}_2\times \mathbb{Z}_2\to \mathcal{C}_{2v}$. (The points corresponding to a vertex orbit of this framework have the form  $(x,y,w)$, $(-x,y,w)$, $(-x,-y,w)$, $(x,-y,w)$).

Consider the other spherical framework $(G,q)$ with $\mathcal{S}$ symmetry. Rotate the framework by $\pi/2$ taking the $x$-axis to the $z$-axis (i.e., the rotation matrix has a $1$ in positions $(1,4), (2,2), (3,3)$ and a $-1$ in position $(4,1)$, and zeros elsewhere) to move all points onto the strict upper hemisphere. Then the points of any vertex orbit of the resulting framework are of the form $(-z,-y,-w,x)$, $(z,y,w,x)$, $(z,-y,w,x)$, $(-z,y,-w,x)$. Finally centrally project the framework onto the hyperplane $z=1$ to obtain a $\mathbb{Z}_2\times \mathbb{Z}_2$-symmetric  bar-joint framework in $\mathbb{R}^3$ with respect to $\theta$ and $\tau':\mathbb{Z}_2\times \mathbb{Z}_2\to \mathcal{C}_{2h}$, where $\mathcal{C}_{2h}$ is generated by the reflection with mirror plane $y=0$ and the half-turn about the $y$-axis. (The points corresponding to a vertex orbit of this framework have the form  $(-z,-y,-w)$, $(z,y,w)$, $(z,-y,w)$, $(-z,y,-w)$).

Since all of these operations preserve infinitesimal rigidity (resp. forced $\mathbb{Z}_2\times \mathbb{Z}_2$-symmetric infinitesimal rigidity), the result follows.
\end{proof}

\begin{rem} From the proofs of Corollaries~\ref{cor:mirinv3d} and \ref{cor:c2vc2h3d}  we can also easily obtain analogous statements to the ones in Corollary~\ref{cor:mirhalf}(c),(d) and Corollary~\ref{cor:mirhalf}(e),(f) for the group pairings $\mathcal{C}_s$, $\mathcal{C}_i$, and $\mathcal{C}_{2v}$, $\mathcal{C}_{2h}$. We leave the details to the reader.
\end{rem}

The only other symmetry groups  containing only involutions in $\mathbb{R}^3$ are $\mathcal{C}_2$ and $\mathcal{D}_2$. Neither of them is paired with another group. 

\begin{rem} It follows from  Corollary~\ref{cor:mirinv3d} that a $\mathbb{Z}_2$-regular realisation of a graph $G$ as a bar-joint framework in $\mathbb{R}^3$ with respect to $\theta:\mathbb{Z}_2\to \textrm{Aut}(G)$ (which acts freely on $V$) and $\tau:\mathbb{Z}_2\to \mathcal{C}_s$ is infinitesimally rigid (resp. forced $\mathbb{Z}_2$-symmetric infinitesimally rigid) if and only if a $\mathbb{Z}_2$-regular realisation of $G$ as a bar-joint framework in $\mathbb{R}^2$ with respect to  $\theta$ and $\tau':\mathbb{Z}_2\to \mathcal{C}_i$ is infinitesimally rigid (resp. forced $\mathbb{Z}_2$-symmetric infinitesimally rigid), since $\mathbb{Z}_2$-regularity is preserved under the described transfer.

In particular, this provides a direct geometric argument for the fact that the combinatorial characterisations for $\mathbb{Z}_2$-regular infinitesimal rigidity (resp. forced $\mathbb{Z}_2$-symmetric infinitesimally rigid) are the same for \emph{body-bar} frameworks (i.e. structures consisting of full-dimensional rigid bodies connected in pairs by rigid bars) with mirror and inversion symmetry in $\mathbb{R}^3$, as shown in \cite{STbb}. (See also \cite{GSW}.)

Similarly, Corollary~\ref{cor:c2vc2h3d} explains  the fact that the combinatorial characterisations for $\mathbb{Z}_2\times \mathbb{Z}_2$-regular infinitesimal rigidity (resp. forced $\mathbb{Z}_2\times \mathbb{Z}_2$-symmetric infinitesimal rigidity)  are the same for body-bar frameworks with $\mathcal{C}_{2v}$ and $\mathcal{C}_{2h}$ symmetry in $\mathbb{R}^3$.
\end{rem}

Note that  combinatorial characterisations of $\Gamma$-regular forced $\Gamma$-symmetric infinitesimally rigid body-bar frameworks have been established for all symmetry groups in general dimension \cite{Tan15}. Moreover, combinatorial characterisations of $\Gamma$-regular infinitesimally rigid body-bar frameworks  have been established for all symmetry groups that have $\mathbb{Z}_2\times \cdots \times \mathbb{Z}_2$ as an underlying abstract group $\Gamma$ \cite{STbb}. Thus, our new geometric insights do not yield any new combinatorial results regarding the rigidity of symmetric body-bar frameworks in $\mathbb{R}^d$.

There are of course many symmetry groups in $\mathbb{R}^d$, $d\geq 3$, that contain elements that are not involutions. We leave it as an open problem to establish a complete list of group pairings in these higher-dimensional spaces.


\section{Non-free actions}
\label{sec:non-free}

In Section~\ref{sec:pairs}, we made the assumption that $G=(V,E)$ is a $\Gamma$-symmetric graph with respect to $\theta:\Gamma\to \textrm{Aut}(G)$, where $\theta$ acts freely on $V$. In this section we will now consider the case where $\theta$ may fix some vertices of $G$. 

\subsection{Background}

We say that a vertex $i$ of  a $\Gamma$-symmetric graph $G$ (with respect to $\theta$) is \textrm{fixed} by $\gamma\in \Gamma$, $\gamma\neq 1$, if  $\theta(\gamma)(i)=i$ (or in short $\gamma (i)=i$). Similarly, an edge $\{i,j\}$ is fixed by $\gamma$ if either $\gamma(i)=i$ and $\gamma(j)=j$ or $\gamma(i)=j$ and $\gamma(j)=i$.
The number of vertices and edges of $G$ that are fixed by $\gamma$ are denoted by $|V_\gamma|$ and $|E_\gamma|$, respectively.

For forced $\Gamma$-symmetric rigidity, an orbit matrix has been established for bar-joint frameworks in \cite{SWorbit} which allows for $\theta$ to be non-free on $V$. However, the structure of the orbit matrix becomes significantly more complex when $\theta$ is not free on $V$ and hence the corresponding conditions for  forced $\Gamma$-symmetric rigidity also become more involved.  Thus, the combinatorics of $\Gamma$-regular forced $\Gamma$-symmetric rigidity has not yet been properly investigated in the case when $\theta$ is not free on $V$. In the following we will therefore focus on incidentally symmetric frameworks.

A (bar-joint, spherical or point-hyperplane) framework is called  \emph{isostatic} if it is infinitesimally rigid and the removal of any edge yields an infinitesimally flexible framework. 
For bar-joint frameworks,  the following combinatorial characterisations of $\Gamma$-regular isostatic  frameworks in the plane were established in \cite{SchC3,Sch}. 

We say that a graph $G=(V,E)$ is \emph{$(2,3)$-tight} if $|E|=2|V|-3$ and for all subgraphs $(V',E')$ with $|E'|>0$ we have $|E'|\leq 2|V'|-3$.

\begin{thm}[\cite{SchC3,Sch}] \label{thm:bernd} 
Let $\Gamma=\langle \gamma \rangle$ and let $(G,p)$ be a $\Gamma$-regular bar-joint framework (with respect to $\theta$ and $\tau$) in $\mathbb{R}^2$, where $\tau(\Gamma) \in \{ \mathcal{C}_s, \mathcal{C}_2,\mathcal{C}_3\}$. Then $(G,p)$ is isostatic if and only if $G$ is $(2,3)$-tight and
\begin{itemize}
\item[(i)] $|E_\gamma|=1$ for $\tau(\Gamma)=\mathcal{C}_s$.
\item[(ii)] $|V_\gamma|=0$ and $|E_\gamma|=1$ for $\tau(\Gamma)= \mathcal{C}_2$.
\item[(iii)] $|V_\gamma|=0$ for $\tau(\Gamma)= \mathcal{C}_3$.
\end{itemize}
\end{thm}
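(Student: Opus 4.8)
The plan is to separate the two implications and, in both, to exploit the fact that $\Gamma$-regularity lets us pass freely between the symmetric and the generic picture. First I would record the standard reduction: a $\Gamma$-regular framework is isostatic precisely when its rigidity matrix has independent rows of total rank $2|V|-3$, i.e. when $|E|=2|V|-3$ and $(G,p)$ supports no self-stress. Since the rank at a $\Gamma$-symmetric realisation never exceeds the generic rank, a $\Gamma$-regular independent framework with $|E|=2|V|-3$ forces the generic rank to equal $|E|$; hence $G$ is $(2,3)$-sparse, and together with the edge count, $(2,3)$-tight. This already yields the $(2,3)$-tightness part of the statement and reduces the theorem to pinning down the fixed-element conditions and proving sufficiency.

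For necessity of conditions (i)--(iii) I would use the symmetry-extended Maxwell count coming from the block-decomposition of the rigidity matrix along the irreducible representations of $\Gamma$ (as indicated in the introduction). Writing $\Gamma(V),\Gamma(E)$ for the permutation representations on vertices and edges, $\Gamma_{xy}$ for the translation representation, and $\Gamma_T,\Gamma_R$ for the translations and the in-plane rotation, an isostatic framework has neither mechanisms nor self-stresses, so the virtual character $\Gamma(V)\otimes\Gamma_{xy}-\Gamma(E)-\Gamma_T-\Gamma_R$ must vanish at every element of $\Gamma$. Evaluating at the non-trivial element $\gamma$ gives the three cases directly: for $\mathcal{C}_s$ the reflection has $\chi_{\Gamma_{xy}}(\gamma)=0$ and $\chi_{\Gamma_R}(\gamma)=-1$, so $0=|E_\gamma|-1$; for $\mathcal{C}_2$ the half-turn acts as $-I$ with $\chi_{\Gamma_R}(\gamma)=1$, giving $-2|V_\gamma|=|E_\gamma|-1$, which (since only the centre can be fixed and $|E_\gamma|\ge 0$) forces $|V_\gamma|=0$ and $|E_\gamma|=1$; for $\mathcal{C}_3$ no edge can be fixed by an order-$3$ rotation, so $-|V_\gamma|=0$, whence $|V_\gamma|=0$. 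These are exactly (i)--(iii).

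For sufficiency I would show that \emph{some} $\Gamma$-symmetric realisation of $G$ is isostatic; by $\Gamma$-regularity (maximum rank) this then transfers to the regular realisation. The route is a symmetry-preserving inductive construction. I would prove that every graph satisfying the hypotheses is obtained from a small base graph---a single edge fixed by $\gamma$ for $\mathcal{C}_s$ and $\mathcal{C}_2$, and a triangle carrying one free vertex-orbit for $\mathcal{C}_3$---by a list of symmetric Henneberg-type moves (orbit $0$-extensions and $1$-extensions, together with the move that installs the unique fixed edge in the reflection and half-turn cases). One then checks move by move that each operation preserves $|E|=2|V|-3$, $(2,3)$-sparsity and the relevant fixed-element count, and that it preserves isostaticity: the new rows of the rigidity matrix are independent at a suitably generic symmetric placement, because the locus where they become dependent is a proper, $\Gamma$-invariant algebraic subset that a $\Gamma$-regular placement avoids. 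The base cases are immediate, since a symmetric bar and a symmetric triangle are isostatic.

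The main obstacle is the recursive characterisation underlying sufficiency, namely the \emph{reduction}: proving that every $G$ in the class admits an inverse symmetric move keeping it in the class. The classical Laman reduction (finding an admissible inverse $1$-extension) must be upgraded so as to respect both the group action and the fixed-element constraints, and the analysis near the fixed edge (for $\mathcal{C}_s,\mathcal{C}_2$) and near the forbidden central vertex (for $\mathcal{C}_2,\mathcal{C}_3$) requires a careful case analysis to guarantee that an admissible inverse move always exists. By contrast, the geometric independence lemma for each move is comparatively routine, although the step installing the fixed edge must be verified separately since that edge sits in a special position. The full details are carried out in \cite{SchC3,Sch}.
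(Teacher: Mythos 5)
This theorem is imported by the paper from \cite{SchC3,Sch} without proof, and your proposal correctly reconstructs the strategy of those original sources: necessity of (i)--(iii) via the symmetry-extended Maxwell rule (your character evaluations at the reflection, half-turn and $3$-fold rotation are all correct, as in \cite{Cetal}, and the generic-rank argument for $(2,3)$-tightness is sound), together with sufficiency via symmetry-adapted Henneberg constructions, where you rightly flag the inverse-reduction step as the substantial difficulty. Since your outline matches the approach of the cited proofs and contains no erroneous step, there is nothing to correct.
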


As shown in  \cite{Cetal}, only two other non-trivial symmetry groups can give isostatic frameworks in the plane, namely $\mathcal{C}_{2v}$ and $\mathcal{C}_{3v}$. A Laman-type theorem (analogous to the one above) has not yet been established for these groups, see \cite{SchC3,Sch}. There are also no combinatorial characterisations of $\Gamma$-regular isostatic frameworks in higher dimensions, except that for \emph{body-bar} frameworks in Euclidean 3-space, some partial results, as well as a number of conjectures, were given \cite{GSW}, for a range of symmetry groups.  

 Combinatorial characterisations analogous to the ones in  Theorem~\ref{thm:bernd} have not yet been investigated for symmetric spherical frameworks or point-hyperplane frameworks. (Necessary conditions for a symmetric point-line framework in the plane to be isostatic have been obtained in \cite{OP}, but to the best of our knowledge, sufficiency of these conditions has not yet been investigated.)
 
 Note that Corollary~\ref{cor:mirhalf} explains why the conditions in Theorem~\ref{thm:bernd} are the same for  $\mathcal{C}_2$ and $\mathcal{C}_s$ in the case when $\theta$ acts freely on $V$. Moreover, it immediately follows from the results in Section~\ref{sec:transfer} that Theorem~\ref {thm:bernd} also gives combinatorial characterisations of $\Gamma$-regular isostatic \emph{spherical} frameworks on $\mathbb{S}^2$ with mirror, half-turn and 3-fold rotational  symmetry. 

\subsection{Group pairings under non-free actions}\label{subsec:nonfreepair}

Consider our partial inversion process of linking up groups on the $d$-sphere where some vertices are fixed by non-trivial group elements. Here if the fixed vertices are left alone in the partial inversion, then the resulting framework will typically not be symmetric. We will discuss this issue further in Section~\ref{sec:elliptic}. However, in the projection to Euclidean $d$-space, symmetry can be recovered.

The key example is the $\mathcal{C}_2$ and $\mathcal{C}_s$ pairing on $\mathbb{S}^2$ and $\mathbb{R}^2$.
Take a bar-joint framework with $\mathcal{C}_s$ symmetry in $\mathbb{R}^2$ and  project it (as described in the proof of Corollary~\ref{cor:mirhalf}) to a spherical framework $(G,p)$ with $\mathcal{C}_s$ symmetry on $\mathbb{S}^2$. Suppose $G$ has a vertex $v$ that is fixed by the reflection. Now apply the partial inversion to all vertex orbits of size 2 of $(G,p)$, as described  in the proof of Theorem~\ref{thm:mirrorhtpair}. Since $v$ is in a vertex orbit of size 1, it is left alone in the partial inversion process, so the resulting spherical framework $(G,q)$ does not have $\mathcal{C}_2$ symmetry (unless we add the symmetric copy of the point corresponding to $v$; see Section~\ref{sec:elliptic}). However, when we project $(G,q)$ to a point-line framework in $\mathbb{R}^2$ as described in the proof of Corollary~\ref{cor:mirhalf}, then the point corresponding to the  vertex $v$ is mapped to a line, and we may assume that it goes through the origin (since lines can be translated without affecting infinitesimal rigidity, by Remark~\ref{rem:shift}). Thus, the resulting point-line framework in $\mathbb{R}^2$ does have the desired $\mathcal{C}_2$ symmetry. This yields the following extension of Corollary~\ref{cor:mirhalf}.

\begin{cor}\label{cor:mirhalffixed} Let $G=(V,E)$ be a $\mathbb{Z}_2$-symmetric graph with respect to $\theta:\mathbb{Z}_2\to \textrm{Aut}(G)$, and let $F$ be the subset of vertices of $G$ that are fixed by the non-trivial element of $\mathbb{Z}_2$ (with respect to $\theta$). Then the following are equivalent. 
 \begin{itemize}
\item[(a)] $G$ can be realised as a $\mathbb{Z}_2$-symmetric isostatic bar-joint framework in $\mathbb{R}^2$ with respect to $\theta$ and $\tau:\mathbb{Z}_2\to \mathcal{C}_s$.
\item[(b)]  $G$ can be realised as a $\mathbb{Z}_2$-symmetric isostatic point-line framework in $\mathbb{R}^2$ with respect to $\theta$ and $\tau':\mathbb{Z}_2\to \mathcal{C}_2$, such that each vertex in $F$ is realised as a line and each vertex in $V\setminus F$ is realised as a point.
\end{itemize}
\end{cor}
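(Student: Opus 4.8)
The plan is to mirror the transfer chain used in the proofs of Theorem~\ref{thm:mirrorhtpair} and Corollary~\ref{cor:mirhalf}, but now to carry the fixed vertices of $F$ through every step and to check that the $\mathcal{C}_2$-symmetry, which genuinely fails on the sphere, is recovered after the final central projection. Since we work with isostatic (incidentally symmetric) frameworks rather than forced-symmetric ones, all the operations can be justified using Lemma~\ref{lem:inv}, which permits an arbitrary subset $I\subseteq V$ and does not require the action to be free, together with the central-projection correspondence of Theorem~\ref{thm:symtrans} and Remark~\ref{rem:shift}; the orbit-matrix computation in Theorem~\ref{thm:mirrorhtpair}, which does rely on freeness, is not needed here. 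As recorded after Lemma~\ref{lem:inv}, these operations preserve not only infinitesimal rigidity but also independence of the rows of the rigidity matrix and the dimension of the space of infinitesimal motions, so each one carries isostatic frameworks to isostatic frameworks and is reversible. It therefore suffices to construct the transfer in one direction.

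For (a)$\Rightarrow$(b), I would begin with a $\mathcal{C}_s$-symmetric isostatic bar-joint framework in $\mathbb{R}^2$, taking the mirror to be the line $x=0$ so that each vertex of $F$ lies on it. Central projection lifts this to an isostatic spherical framework $(G,p)$ on the upper hemisphere of $\mathbb{S}^2$ with $\tau(-1)$ equal to the reflection in the plane $x=0$; each $v\in F$ then satisfies $p(v)=(0,y,z)$ with $z>0$, i.e.\ it lies on the great circle $x=0$. Next I would apply the partial inversion of Theorem~\ref{thm:mirrorhtpair} to one representative of each size-$2$ orbit (the antipodal map on those vertices), which converts $\tau$ into the half-turn $\tau'$ about the $x$-axis on those orbits while leaving each $v\in F$ fixed at $(0,y,z)$; by Lemma~\ref{lem:inv} the resulting framework $(G,q)$ is still isostatic, although it is not $\mathcal{C}_2$-symmetric on $\mathbb{S}^2$. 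Finally I would run the projection of Corollary~\ref{cor:mirhalf}: move all points into the closed left hemisphere by inverting orbits, rotate by $\pi/2$ about the $y$-axis (which carries the plane $x=0$ onto the equator $z=0$), and centrally project to the plane $z=1$. Each non-fixed vertex, having $x\neq 0$, is taken off the equator and hence projects to a point inheriting the half-turn symmetry, while each $v\in F$, lying on $x=0$, is carried onto the equator and therefore projects to a line whose normal direction is its equatorial position.

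The crux is to see that these $|F|$ lines can be made genuinely $\mathcal{C}_2$-symmetric. By Remark~\ref{rem:shift} each such line may be translated to pass through the origin without altering any rigidity property; a line through the origin is invariant under the half-turn $x\mapsto -x$, and its normal $a$ satisfies $\tau'(-1)a=-a=\pm a$, so the point-line symmetry conditions (\ref{phsym2}) and (\ref{phsym3}) hold for each fixed vertex $v$ with $\theta(-1)(v)=v$. Thus the projected object is an isostatic $\mathcal{C}_2$-symmetric point-line framework in which every vertex of $F$ is realised as a line and every vertex of $V\setminus F$ as a point, giving (b). Reversing the chain (recentring the lines if needed, un-projecting to $\mathbb{S}^2$, rotating back, inverting orbits, and undoing the partial inversion) yields (b)$\Rightarrow$(a), since every step preserves isostaticity and is invertible.

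I expect the main obstacle to be the symmetry bookkeeping at the spherical stage: unlike the free case, $(G,q)$ truly fails to be $\mathcal{C}_2$-symmetric because the fixed vertices have no inversion partners, so the symmetry claim cannot be read off on $\mathbb{S}^2$ and must be established only after projection, precisely where a fixed point on the equator becomes a line through the origin. Some care is also needed to confirm that the rotation and inversion steps preserve the condition $x=0$ exactly for the fixed vertices (so they land on the equator and become lines) while keeping the non-fixed vertices off the equator (so they remain points); this is what guarantees that the partition $V_H=F$, $V_P=V\setminus F$ is exactly the one demanded in (b).
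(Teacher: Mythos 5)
Your proposal is correct and follows essentially the same route as the paper's own argument in Section~\ref{subsec:nonfreepair}: lift to $\mathbb{S}^2$, partially invert the size-$2$ orbits via Lemma~\ref{lem:inv} (which indeed requires no freeness of the action), observe that the $\mathcal{C}_2$-symmetry genuinely fails on the sphere at the fixed vertices, and recover it only after central projection by translating the resulting lines through the origin using Remark~\ref{rem:shift}. The one point you flag but do not carry out --- ensuring non-fixed vertices stay off the mirror so that exactly the vertices of $F$ land on the equator and become lines --- is settled by the same small symmetric perturbation (isostaticity being an open condition on symmetric realisations) that the paper invokes in the proof of Corollary~\ref{cor:mirhalf}(c),(d).
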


\begin{figure}[htp]
        \begin{center} \includegraphics[scale=0.5]{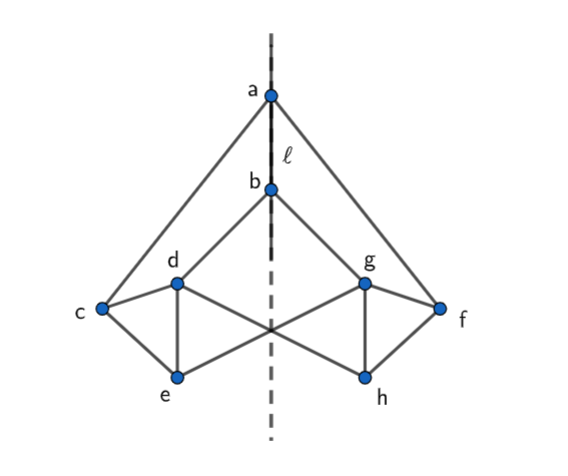} \hspace{0.3cm} 
         \includegraphics[scale=0.5]{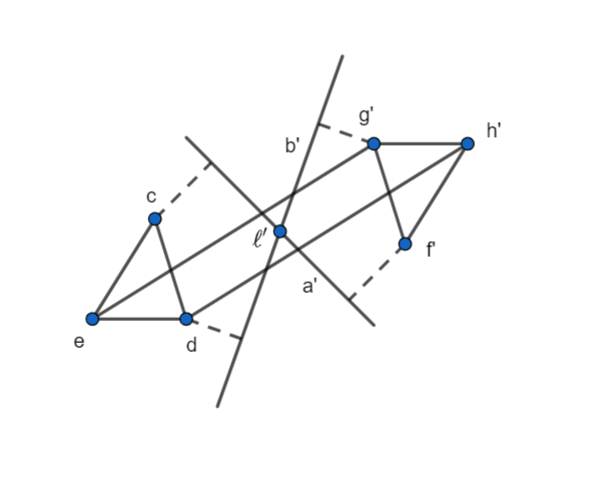}
        \end{center}   
\caption{A point-line framework $(G,p,\ell)$ with $\mathcal{C}_s$ symmetry in $\mathbb{R}^2$. This framework  has a non-trivial symmetry-preserving motion and gives a symmetric point-line framework model of the `grab-bucket mechanism' in engineering (\cite{RosenWills} p. 270).  It has the two point vertices $a,b$ and the line vertex $\ell$ fixed by the reflection, and the line corresponding to $\ell$ lies along the mirror line. $(G,p,\ell)$ may be transformed to the flexible point-line framework with $\mathcal{C}_2$ symmetry in $\mathbb{R}^2$ shown on the right. }
\label{fig:bucket}
\end{figure}

More generally, by carefully tracking the effect of our transfer mappings on points and lines that are fixed by a reflection or half-turn, we obtain the following result. (See also Figure~\ref{fig:bucket} for an illustration of an example).

\begin{cor}\label{cor:mirhalffixed2} Let $G=(V,E)$ be a $\mathbb{Z}_2$-symmetric graph with respect to $\theta:\mathbb{Z}_2\to \textrm{Aut}(G)$, and suppose $F$ is the subset of vertices of $G$ that are fixed by the non-trivial element of $\mathbb{Z}_2$ (with respect to $\theta$). Let $X$ be a non-empty subset of $V$, and let $F_X=F\cap X$ and $F'_{X}=F\setminus X$.  Then the following are equivalent. 
 \begin{itemize}
\item[(a)] $G$ can be realised as a $\mathbb{Z}_2$-symmetric isostatic point-line framework in $\mathbb{R}^2$ with respect to $\theta$ and $\tau:\mathbb{Z}_2\to \mathcal{C}_s$ such that each vertex in $X$ is realised as a line (with $F_{X_{\parallel}}$ and $F_{X_{\perp}}$ denoting the vertices in $F_X$ that are realised parallel and perpendicular to the mirror line of the reflection in $\mathcal{C}_s$, respectively) and each vertex in $V\setminus X$ is realised as a point.
\item[(b)]  $G$ can be realised as a $\mathbb{Z}_2$-symmetric isostatic point-line framework in $\mathbb{R}^2$ with respect to $\theta$ and $\tau':\mathbb{Z}_2\to \mathcal{C}_2$, such that each vertex in $F'_{X}$ and each vertex in $F_{X_{\perp}}$ is realised as a line (with all lines in $F_{X_{\perp}}$ parallel to each other), and each vertex in $V\setminus (F'_{X}\cup F_{X_{\perp}})$ is realised as a point so that all the points of  $X\setminus F_{X_{\perp}}$ are collinear (and perpendicular to the lines  of $F_{X_{\perp}}$) and all points of $F_{X_{\parallel}}$ lie at the origin.
\end{itemize}
\end{cor}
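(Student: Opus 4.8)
The plan is to run exactly the transfer chain from the proof of Corollary~\ref{cor:mirhalf}(c),(d), now keeping careful track of the images of the fixed vertices, and to upgrade ``infinitesimally rigid'' to ``isostatic'' by exploiting that every map in the chain preserves the rank and the row-independence of the rigidity matrix. First I would fix coordinates as in the proof of Theorem~\ref{thm:mirrorhtpair}: take the mirror plane of the reflection in $\mathcal{C}_s$ to be $x=0$, the equator of $\mathbb{S}^2$ to be $z=0$, and use central projection to the affine plane $z=1$ (identified with $\mathbb{R}^2$). Lifting the $\mathcal{C}_s$ point-line framework in (a) to $\mathbb{S}^2$ sends each point to the closed upper hemisphere and each line with normal $a$ to the equatorial point $(a,0)$; the partial inversion of Theorem~\ref{thm:mirrorhtpair} (inverting one representative of each size-$2$ orbit, i.e.\ using $I=V-V_0$) converts the reflection into the half-turn about the $x$-axis; and the reprojection for $\mathcal{C}_2$ inverts orbits into the closed left hemisphere, rotates by $\tfrac{\pi}{2}$ about the $y$-axis (taking the $x$-axis to the $z$-axis), and centrally projects to $z=1$, yielding half-turn symmetry about the origin.

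The genuinely new content is the bookkeeping for the vertices of $F$. These form size-$1$ orbits and are therefore left fixed by the partial inversion, so the intermediate spherical framework is \emph{not} $\mathcal{C}_2$-symmetric (exactly as in the discussion preceding Corollary~\ref{cor:mirhalffixed}); symmetry is recovered only after the final projection. The key observation is that the rotation carries the mirror plane $\{x=0\}$ onto the equatorial plane $\{z=0\}$, carries the $x$-axis onto the pole axis, and fixes the $y$-direction. With this in hand I would verify by direct coordinate computation along the chain the following five cases: (i) a generic vertex of $V\setminus X$ stays a point; (ii) a generic vertex of $X$ (a line whose normal has nonzero first coordinate) reprojects to a point lying on the image of the mirror, so all such vertices are collinear; (iii) a fixed point of $F'_X$ lies on the mirror, so after the rotation it lands on the equator and reprojects to a line; (iv) a vertex of $F_{X_{\perp}}$ has normal in the $y$-direction, whose equatorial point is fixed by the rotation, so it remains a line, and since all such vertices share this normal the resulting lines are mutually parallel and perpendicular to the line of collinear points from (ii); (v) a vertex of $F_{X_{\parallel}}$ is the line along the mirror, with equatorial point on the $x$-axis, which the rotation sends to the pole, so it projects to the origin. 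This reproduces precisely the placement of points and lines asserted in (b), and the translation freedom of Remark~\ref{rem:shift} lets me position each resulting line through the origin as required.

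Finally I would argue that the equivalence preserves isostaticity in both directions. Every inversion used is an instance of Lemma~\ref{lem:inv}, which (by the remark following it) preserves not only infinitesimal rigidity but also independence of the rows of the rigidity matrix and the isomorphism type of the space of infinitesimal motions; the central projections of Section~\ref{sec:transfer} likewise give an isomorphism of motion spaces fixing the dimension of the trivial motions. Hence the rank, the comparison of $|E|$ against the rigidity matrix, and therefore the isostatic property are transported in both directions, and since each step is reversible the full chain is too. I expect the main obstacle to be the simultaneous consistency of case (v) with the others: one must check that the single inversion-and-rotation scheme that sends generic lines to collinear points and perpendicular fixed lines to parallel lines \emph{also} sends the mirror-parallel fixed line to the origin, rather than forcing a different transfer for different vertex types; the coordinate computations above are exactly what pin this down. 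A secondary point requiring care is that, because $\theta$ is no longer free, the orbit-matrix and regularity machinery is unavailable, so the argument must rest entirely on the geometric isomorphisms of motion spaces, which hold without any freeness assumption.
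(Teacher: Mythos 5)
Your proposal is correct and follows essentially the same route as the paper's proof, which likewise applies the transfer mappings from Theorem~\ref{thm:mirrorhtpair} and Corollary~\ref{cor:mirhalf}, tracks the images of the vertices in $F'_X$, $F_{X_{\perp}}$, $F_{X_{\parallel}}$ and $X\setminus F$ under that chain, and concludes by reversibility. Your explicit coordinate verification of the five cases and your appeal to the remark after Lemma~\ref{lem:inv} for preservation of isostaticity simply make explicit what the paper's terser proof asserts.
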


\begin{proof} Let $(G,p,\ell)$ be a $\mathbb{Z}_2$-symmetric isostatic point-line framework in $\mathbb{R}^2$ with respect to $\theta$ and $\tau:\mathbb{Z}_2\to \mathcal{C}_s$ and apply to it the transfer mappings from the proofs of Theorem~\ref{thm:mirrorhtpair} and Corollary~\ref{cor:mirhalf}. In this transfer any point of $(G,p,\ell)$ corresponding to a vertex in $F'_X$ is mapped to a line,  all lines of $(G,p,\ell)$ corresponding to a vertex in $F_{X_{\perp}}$ are mapped to parallel lines, and all lines of $(G,p,\ell)$ corresponding to a vertex in $F_{X_{\parallel}}$ are mapped to a point at the origin. Moreover, the lines of $(G,p,\ell)$ corresponding to vertices in $X\setminus F$ are mapped to collinear points (with the line of collinearity being perpendicular to the lines of $F_{X_{\perp}}$), and any point of $(G,p,\ell)$ corresponding to a vertex in $V\setminus (X\cup F)$ is mapped to a point. It follows that this transfer yields  the desired $\mathbb{Z}_2$-symmetric isostatic point-line framework in $\mathbb{R}^2$ with respect to $\theta$ and $\tau':\mathbb{Z}_2\to \mathcal{C}_2$. Since this process is clearly reversible, the result follows.
\end{proof}

\subsection{Combinatorial consequences}

As mentioned above, necessary conditions for a point-line framework with $\mathcal{C}_2$ or $\mathcal{C}_s$ symmetry to be isostatic in the plane were obtained in \cite{OP}. While this required methods from group representation theory, we may obtain some of these conditions more directly using    Corollaries~\ref{cor:mirhalffixed} and \ref{cor:mirhalffixed2} in conjunction with Corollary~\ref{cor:mirrortransfer} and Theorem~\ref{thm:bernd}. For example, if we start with an isostatic  point-line framework $(G,p,\ell)$  in the plane with $\mathcal{C}_2$ symmetry and with no point at the origin (the centre of the half-turn), then this can be transferred to an isostatic \emph{bar-joint} framework $(G,q)$ with $\mathcal{C}_s$ symmetry in the plane, and Theorem~\ref{thm:bernd} then implies that $G$ must be $(2,3)$-tight and that here exists exactly one edge of $G$ that is fixed by the non-trivial element in $\mathbb{Z}_2$. From the necessary conditions for  a point-line framework with $\mathcal{C}_2$ symmetry to be isostatic we can then also obtain necessary conditions for a point-line framework with $\mathcal{C}_s$ symmetry to be isostatic. 


We may also derive some new conditions. For example, if an isostatic point-line framework with $\mathcal{C}_s$ symmetry and underlying graph $G=(V_P\cup V_H, E)$  only has a single line, and $\theta$ acts freely on $V_P$,  then this line cannot lie along the mirror line, for otherwise this would transfer to an isostatic bar-joint framework with $\mathcal{C}_2$ symmetry in the plane with a vertex that is fixed by the half-turn,  contradicting Theorem~\ref{thm:bernd}.

We may even obtain both necessary and sufficient conditions for $\Gamma$-regular point-line frameworks to be isostatic in some special cases, as the following result shows. 

\begin{thm}  \label{thm:lamanptline2} Let $\mathbb{Z}_2=\langle \gamma \rangle$ and let $(G,p,\ell)$ be a $\mathbb{Z}_2$-regular point-line framework in $\mathbb{R}^2$ with respect to $\theta:\mathbb{Z}_2 \to \textrm{Aut}_{PH}(G)$ and $\tau:\mathbb{Z}_2 \to \mathcal{C}_2$. Suppose that $\gamma$ fixes each $i\in V_H$ and that $\theta$ acts freely on $V_P$. Then $(G,p,\ell)$ is isostatic if and only if  $G$ is $(2,3)$-tight and $|E_{\gamma}|=1$.
\end{thm}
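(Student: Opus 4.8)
The plan is to transfer the problem to the $\mathcal{C}_s$-symmetric bar-joint setting, where Theorem~\ref{thm:bernd}(i) already supplies the combinatorial characterisation, using the fixed-action transfer of Corollary~\ref{cor:mirhalffixed}. Since $\gamma$ fixes exactly the line vertices $V_H$ and acts freely on $V_P$, the fixed-vertex set $F$ of Corollary~\ref{cor:mirhalffixed} is precisely $V_H$. A half-turn maps any line not through the origin to a distinct parallel line, so each line fixed by $\gamma$ must pass through the centre; hence $(G,p,\ell)$ is exactly an instance of part~(b) of that corollary, with the lines being $V_H=F$ and the points being $V_P=V\setminus F$. Corollary~\ref{cor:mirhalffixed} then says that $G$ is realisable as such an isostatic half-turn point-line framework if and only if $G$ is realisable as an isostatic $\mathcal{C}_s$-symmetric bar-joint framework in $\mathbb{R}^2$, in which the fixed vertices $V_\gamma$ are exactly $V_H$ (lying on the mirror line). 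Since $|E_\gamma|$ depends only on the graph automorphism $\theta(\gamma)$, it is the same integer in both realisations, so Theorem~\ref{thm:bernd}(i) will deliver exactly the conditions $(2,3)$-tight and $|E_\gamma|=1$.

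The gap to close is that Corollary~\ref{cor:mirhalffixed} and Theorem~\ref{thm:bernd} are phrased in terms of the \emph{existence} of isostatic realisations, whereas here $(G,p,\ell)$ is a fixed $\mathbb{Z}_2$-regular framework. I would close it with the standard genericity argument: fixing a $\mathbb{Z}_2$-symmetric combinatorial type, let $m$ be the number of edges, $n$ the number of configuration parameters, and $t=\binom{3}{2}=3$ the dimension of the space of trivial infinitesimal motions, which is constant over realisations whose points and lines affinely span $\mathbb{R}^2$. The rank of the rigidity matrix is at most $\min\{m,n-t\}$, and a realisation is isostatic precisely when its rank attains $m=n-t$. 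Hence the existence of a single isostatic $\mathbb{Z}_2$-symmetric realisation forces $m=n-t$ and forces the $\mathbb{Z}_2$-regular (maximal) rank to equal this value, so that \emph{every} $\mathbb{Z}_2$-regular realisation of that type is isostatic. I would invoke this once for $\mathcal{C}_s$-symmetric bar-joint frameworks and once for the half-turn point-line frameworks under consideration.

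The two implications would then run as follows. For necessity, if $(G,p,\ell)$ is isostatic it realises part~(b) of Corollary~\ref{cor:mirhalffixed}, so by part~(a) $G$ admits an isostatic $\mathcal{C}_s$-symmetric bar-joint realisation; the genericity argument makes every $\mathbb{Z}_2$-regular $\mathcal{C}_s$ bar-joint realisation isostatic, and Theorem~\ref{thm:bernd}(i) then gives that $G$ is $(2,3)$-tight with $|E_\gamma|=1$. For sufficiency, if $G$ is $(2,3)$-tight with $|E_\gamma|=1$ then Theorem~\ref{thm:bernd}(i) makes the $\mathbb{Z}_2$-regular $\mathcal{C}_s$ bar-joint realisation isostatic, so Corollary~\ref{cor:mirhalffixed} yields an isostatic half-turn point-line realisation with $V_H$ the lines; the genericity argument promotes this to every $\mathbb{Z}_2$-regular such framework, in particular to $(G,p,\ell)$.

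The main obstacle is not the combinatorics but the genericity step: I must ensure that the dimension $t$ of trivial motions is genuinely constant across the realisations being compared, which is why I restrict to configurations whose points and lines affinely span $\mathbb{R}^2$ (the standing assumption for these frameworks). A secondary point needing care is the bookkeeping of fixed vertices and edges under the transfer---namely that fixed lines map to fixed vertices on the mirror and that $|E_\gamma|$ is preserved---but this is already handled inside the derivation of Corollary~\ref{cor:mirhalffixed}, which I use as a black box precisely to avoid re-running the symmetry-breaking spherical transfer by hand.
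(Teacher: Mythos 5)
Your proposal is correct, and at the top level it follows the same route as the paper: reduce to the $\mathcal{C}_s$-symmetric bar-joint setting via the fixed-vertex transfer of Corollary~\ref{cor:mirhalffixed} (with $F=V_H$) and then invoke Theorem~\ref{thm:bernd}(i). Where you genuinely diverge is in how the gap between ``existence of an isostatic realisation'' and ``this particular $\mathbb{Z}_2$-regular framework is isostatic'' is closed. The paper works with the transfer map itself applied to the given framework $(G,p,\ell)$ and proves that the transfer \emph{preserves $\mathbb{Z}_2$-regularity}: it maps an open neighbourhood of symmetric point-line realisations (over which the rank is maintained) onto an open neighbourhood of $\mathcal{C}_s$-symmetric bar-joint realisations of $G$, the crucial observation being that since $\theta(\gamma)$ fixes every $i\in V_H$, \emph{every} $\mathcal{C}_s$-symmetric bar-joint realisation of $G$ must place those vertices on the mirror line, so the image neighbourhood is genuinely open in the full space of symmetric realisations. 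You instead treat Corollary~\ref{cor:mirhalffixed} as a black-box existence statement and close the gap by rank semicontinuity: an isostatic symmetric realisation attains the maximal possible rank $m=n-t$, hence is itself $\mathbb{Z}_2$-regular, and then every $\mathbb{Z}_2$-regular realisation in the same symmetric realisation space attains that rank and is isostatic. This is valid (your care about the affine spanning assumption, so that $t=3$ is constant, is exactly the right caveat, and an isostatic framework necessarily spans), and it is arguably more robust, since it avoids tracking the transfer on neighbourhoods; what the paper's argument buys in exchange is the slightly stronger structural fact that $\mathbb{Z}_2$-regularity itself is preserved under the transfer, which is reusable elsewhere (e.g.\ in Theorem~\ref{thm:lamanptline1}).

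One small bookkeeping remark: your claim that a line fixed by the half-turn must pass through the origin is the correct setwise-invariance statement, but note that under the paper's symmetry conditions (\ref{phsym2})--(\ref{phsym3}) a fixed vertex $j\in V_H$ only needs $\tau(\gamma)a_j=\pm a_j$, which the half-turn satisfies for any $r_j$; the paper instead normalises $r_j=0$ via Remark~\ref{rem:shift}. This does not affect your argument, since translating lines leaves infinitesimal rigidity unchanged.
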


\begin{proof} Suppose $(G,q',\ell')$ is isostatic with symmetry $\mathcal{C}_2$ as stated in the theorem. We transfer $(G,q',\ell')$ to a bar-joint framework $(G,p')$ with $\mathcal{C}_s$ symmetry as in Corollary~\ref{cor:mirhalffixed}. 
Then $(G,p')$ is also isostatic and it follows from Theorem~\ref{thm:bernd} that $G$ must be $(2,3)$-tight and $G$ must have exactly one edge that is fixed by $\gamma$. 

Conversely, suppose that $G$ is $(2,3)$-tight and $|E_{\gamma}|=1$. By the definition of $\theta$, we claim that if $(G,q',\ell')$ is $\mathbb{Z}_2$-regular, then so  is $(G,p')$. To see this, take an open neighbourhood of $\mathbb{Z}_2$-symmetric point-line realisations of $G$ in $\mathbb{R}^2$ (with respect to $\theta$ and $\tau:\mathbb{Z}_2\to \mathcal{C}_2$) around $(G,q',\ell')$ in which the rank of the corresponding point-line rigidity matrices is maintained. Then the transfer process described in the proofs of Theorem~\ref{thm:mirrorhtpair} and Corollary~\ref{cor:mirhalf} maps this neighbourhood to an open neighbourhood of $\mathbb{Z}_2$-symmetric bar-joint realisations of $G$ in $\mathbb{R}^2$ (with respect to $\theta$ and $\tau':\mathbb{Z}_2\to \mathcal{C}_s$) around $(G,p')$, since $\theta$ forces all vertices in $V_H$ to be mapped to points on the mirror line corresponding to the reflection in $\mathcal{C}_s$. Since the rank of the corresponding bar-joint rigidity matrices is maintained in this neighbourhood,  we may deduce that $(G,p')$ is $\mathbb{Z}_2$-regular, as claimed.

 Thus, it follows from Theorem~\ref{thm:bernd} that $(G,p')$ is isostatic.   Therefore, $(G,q',\ell')$ is also isostatic. 
\end{proof}

\subsection{Group pairings in Elliptic Geometry}\label{sec:elliptic}

To better fit our work into the historical evolution of rigidity theory, with a projective geometric background, we begin by recalling (and extending) the concept of static rigidity  -- the language of structural or civil engineers for several centuries.  

Let $R_\bS^d (G,p)$ denote the matrix of coefficients of the linear system describing spherical infinitesimal rigidity (see Equations (\ref{eq:inner_inf})-(\ref{eq:scale})).
We can define a spherical framework $(G,p)$ to be {\it statically rigid} if the row space of $R_\bS^d (G,p)$ spans  the space of all possible row vectors which are orthogonal to the space of trivial infinitesimal motions.   The elements of this space are called \emph{equilibrium loads} on the framework viewed as forces applied to each vertex, and static rigidity is the property that all equilibrium loads are linear combinations of the rows -- they are {\it resolved}. Thus static rigidity is based on the dimension of the row space being the difference between the number of columns and the dimension of the trivial motions of $\mathbb{S}^d$.   Since row rank equals column rank, this is the dual of infinitesimal rigidity and gives us an equivalent way of understanding infinitesimal motions.

Historically,  static rigidity was studied by engineers and was recognized as projectively invariant, first implicitly by M\"obius who wrote a textbook on statics using barycentric coordinates (his precursor of homogenous coordinates now used for projective geometry).  Balancing weighted points is the language of forces and statics.  Immediately after hearing a talk on the `new geometry' (projective geometry) in 1863, the British engineer Rankine (then writing a text on statics) published a short note observing the  invariance of statics under projective geometry \cite{Ran}.  This context of projective invariance was part of the milieu of Cayley and Klein when geometry was a shared vocabulary and approach of mathematicians and engineers. 

As part of the revival of the mathematical theory of rigidity in the 1970's, Crapo and Whiteley presented the  statics  of frameworks in terms of  explicitly projective notation  and reasoning, including references to 3D translations as rotations about lines at infinity (sliders) \cite{CrW}.  The work here builds on those continuing explorations. 

In this paper we have used the spherical model of frameworks with points on the equator to incorporate hyperplanes into Euclidean bar-joint frameworks, and to analyse the infinitesimal rigidity of this larger class of point-hyperplane frameworks.  This also implicitly incorporates projective transformations. Consider the following sequence of operations on a point-hyperplane framework in $\mathbb{R}^d$: first project to $\mathbb{S}^d$; then apply an isometry of $\mathbb{S}^d$; and finally reproject  to $\mathbb{R}^d$. The resulting point-hyperplane framework in $\mathbb{R}^d$ is a projective transformation of the original framework. By restricting this process to the upper hemisphere $\mathbb{S}^d_{>0}$, there is no ambiguity or collapsing of points in this process. Antipodal points on the equator, however, map to parallel hyperplanes (or the same hyperplane through the origin), but with opposite normals. 


When we consider certain $\Gamma$-symmetric spherical frameworks $(G,p)$,  where $\theta:\Gamma\to \textrm{Aut}(G)$ acts freely on the vertices, then we have seen in Section~\ref{sec:pairs} that we may invert the points corresponding to an index 2 subgroup of $\Gamma$ without changing the infinitesimal rigidity properties, in order to establish a group pairing $\tau(\Gamma) \leftrightarrow \tau'(\Gamma)$ as in Theorem~\ref{thm:pairs}. However, we have also seen in Section~\ref{subsec:nonfreepair} that if $\theta$ does not act freely on the vertices, then the presence of the fixed vertices implies that this partial inversion process destroys the symmetry of the spherical framework. Nevertheless, in the projection to Euclidean space, the symmetry can be recovered, as fixed points on the equator are mapped to fixed hyperplanes which may be shifted to go through the origin (recall Remark \ref{rem:shift}). Since antipodal points on the equator project to the same hyperplane, we can actually think of the symmetry as being present on the sphere as well, provided that we somehow identify antipodal points on the sphere.


This leads us back to the projective roots of infinitesimal rigidity, since the sphere with antipodal points identified is the  `metric projective space'  also called the elliptic geometry \cite{Wikipedia}.
This approach  is central to the approach in \cite{Con}, where the symmetries in projective space are described, and pairings of spherical symmetries are organized using inversions.  

We have explored these topics using two equivalent geometric models: the sphere with antipodal points identified  or equivalently, the sphere with points as equivalence classes of pairs of antipodal points.    The sphere with antipodal points identified  can be represented working from Lemma \ref{lem:inv} and the associated matrices.
  It is easy to see that $R_\bS^d (G,p)$ is rank equivalent to the following matrix which we call the basic spherical rigidity matrix:
\begin{displaymath} \bordermatrix{& & & & v_i & & & & v_j & & &
\cr & & & &  & & \vdots & &  & & &
\cr
 \{v_i,v_j\} & 0 & \ldots &  0 & p_j & 0 & \ldots & 0 &  p_i &  0 &  \ldots&  0
 \cr & & & &  & & \vdots & &  & & &
\cr \{v_i,v_i\} & 0 & \ldots &  0 & 2(p_i) & 0 & \ldots & 0 &  0 &  0 &  \ldots&  0
 \cr & & & &  & & \vdots & &  & & &
\cr v_i & 0 & \ldots &  0 &p_{i}& 0 & \ldots & 0 & 0&  0 &  \ldots&  0
\cr & & & &  & & \vdots & &  & & &
\cr  v_j & 0 & \ldots &  0 & 0& 0 & \ldots & 0 & p_{j} &  0 &  \ldots&  0
\cr & & & &  & & \vdots & &  & & &
\cr & & & &  & & \vdots & &  & & &
}
\textrm{,}\end{displaymath}

We can use inversion on this matrix to represent the geometry of this
more complete projective model of equivalent frameworks in the projective or elliptic space.
Specifically, by applying inversion to any chosen subset of the vertices, taking $p_i$ to $\epsilon_i p_i$  with $\epsilon_i = \pm 1$, the matrix can be transformed by row and column multiplications into the following form: 
$R_\bS^d (G,\epsilon p)$:

\begin{displaymath} \bordermatrix{& & & &  (\epsilon_i) v_i & & & &  (\epsilon_j) v_j & & &
\cr & & & &  & & \vdots & &  & & &
\cr
  \epsilon_i\epsilon_j \{ v_i, v_j\} & 0 & \ldots &  0 & (\epsilon_j) p_j & 0 & \ldots & 0 &   (\epsilon_i)p_i &  0 &  \ldots&  0
 \cr & & & &  & & \vdots & &  & & &
\cr \{v_i,v_i\} & 0 & \ldots &  0 & 2((\epsilon_i)p_i) & 0 & \ldots & 0 &  0 &  0 &  \ldots&  0
 \cr & & & &  & & \vdots & &  & & &
\cr   v_i & 0 & \ldots &  0 &(\epsilon_i)p_{i}& 0 & \ldots & 0 & 0&  0 &  \ldots&  0
\cr & & & &  & & \vdots & &  & & &
\cr  v_j & 0 & \ldots &  0 & 0& 0 & \ldots & 0 &  (\epsilon_j)p_{j} &  0 &  \ldots&  0
\cr & & & &  & & \vdots & &  & & &
\cr & & & &  & & \vdots & &  & & &
}
\textrm{,}\end{displaymath}

Lemma \ref{lem:inv}  implies that all the matrix properties (rank, size, dimensions of kernel and co-kernel) are preserved.  Therefore the associated rigidity properties of infinitesimal and static rigidity are equivalent.  If we collect the class of all these inversions, we create the {\it  $\iota$-equivalent spherical frameworks}.  All $\iota$-equivalent spherical frameworks have the same projection into $\mathbb{R}^d$ given that we do not distinguish positively and negatively weighted points -- or hyperplanes through the origins with opposite normals.  A symmetry of the frameworks modulo this $\iota$-equivalence is a transformation of equivalence classes.  

Under this identification, half-turn symmetry and mirror symmetry are $\iota$-equivalent -- with the added clarification that: where a mirror symmetry appears to take  a point on the equator perpendicular to the mirror to its antipode, this is a fixed point.   In this perspective, a half-turn not only fixes the center of rotation, but all points on the equator, in all dimensions.  In projection, we must identify both versions of a line ($\pm $ the normal) and see the line as fixed, as we also see $\pm $ a point as the same fixed point.  

The static theory outlined at the beginning of this subsection extends indirectly to point-hyperplane frameworks \cite{Elfakhari}.   
This static theory provides the basis for the theory of tensegrity frameworks \cite{Elfakhari}, but here there are subtleties in a projective theory of tensegrity frameworks on the sphere, and in their projections, which deserve an extended exploration.   The static theory can also be applied to the row space and row dependencies of orbit matrices under symmetry \cite{SWorbit}. 

\section{Further work}

\noindent 1. \emph{Global rigidity.} Connelly and Whiteley \cite{ConWhi} explored the connections between  global rigidity of frameworks in spherical space and their projections to Euclidean Space. The key technique used was to model spherical frameworks as `cone frameworks' in Euclidean space.  
 Such a framework has a cone vertex realised at the origin which is adjacent to all other vertices (recall also Section~\ref{subsec:sph}).
Observing that inversion within a cone preserved global rigidity, we anticipate a number of the results here will transfer.   Since equilibrium stresses are a second tool for global rigidity, and we can trace the impact of inversion and projection on the signs of the stresses, the tools exist for a more detailed analysis of the transfer and the pairings to track the effect on global rigidity \cite{TWHandG, BSWWhb}. However this is a largely unexplored problem in the presence of symmetry, or indeed for point-hyperplane frameworks so we leave this as future work.\\


\noindent 2. \emph{Change of metrics.} Infinitesimal rigidity, as a projective invariant, is invariant under change of metrics among those with a shared projective geometry \cite{NW,SaliolaWh}.  With this background, and the recognition that hyperbolic frameworks (as cones) project to Minkowski frameworks, we anticipate that the results for the Euclidean and spherical spaces extend to Minkowski space (or more generally any pseudo-Euclidean space) and then to the hyperbolic and De Sitter spaces. See \cite{NW,BSWWSphere} for more details. We further expect that the pairing results of this paper can be adapted to this context, when the corresponding symmetries exist.  In particular, half-turn symmetry will correspond to mirror symmetry by the known transfers of rigidity results from the Euclidean space to Minkowski space.  Since Minkowski space has the full space of translations, we anticipate that there are full extensions to a theory of point-hyperplane frameworks in Minkowski space.\\ 

\noindent 3.  \emph{Parallel drawings.} It is well known that, for the plane, the theory of parallel drawings is isomorphic to the theory of infinitesimal rigidity \cite{SWHandR} -- so the pairing of half-turn symmetry with mirror symmetry in the plane also transfers.  More generally, the theory of parallel drawings of point-hyperplane frameworks in all dimensions is projectively invariant.  This suggests that pairings of symmetries will have analogs for the theory of symmetric parallel drawings.  





\section*{Acknowledgements}  

Our collaboration on this paper was initiated during the BIRS workshop on `Advances in Combinatorial and Geometric Rigidity' in July 2015.    
Walter Whiteley's work has been supported by grants from NSERC (Canada). Katie Clinch's work is supported by JST CREST (JPMJCR14D2).


\bibliographystyle{abbrv}
\def\lfhook#1{\setbox0=\hbox{#1}{\ooalign{\hidewidth
  \lower1.5ex\hbox{'}\hidewidth\crcr\unhbox0}}}

\end{document}